\documentclass[a4paper,12pt]{amsart}

\usepackage{amsmath,amssymb,amsthm,mathtools}
\usepackage{hyperref}
\hypersetup{
 hidelinks,
 pdftitle={Dense-Kernel and Closed-Core Reductions in the D-space Problem for Charming Spaces},
 pdfauthor={Xing-Yu Hu, Ze-Qi Yin},
 pdfsubject={General topology},
 pdfkeywords={charming space, D-space, Lindelof Sigma-space, dense kernel, closed core}
}
\usepackage{enumitem}
\usepackage[a4paper,textwidth=13.5cm,textheight=21cm,centering]{geometry}

\makeatletter
\def\@@and{\unskip,}
\makeatother

\newtheorem{theorem}{Theorem}[section]
\newtheorem{lemma}[theorem]{Lemma}
\newtheorem{proposition}[theorem]{Proposition}
\newtheorem{corollary}[theorem]{Corollary}
\newtheorem{conjecture}[theorem]{Conjecture}

\theoremstyle{definition}
\newtheorem{definition}[theorem]{Definition}
\theoremstyle{remark}
\newtheorem{remark}[theorem]{Remark}

\numberwithin{equation}{section}

\newcommand{\covK}{\operatorname{cov}_K}

\title[Dense-kernel and closed-core reductions]{Dense-kernel and closed-core reductions in the \texorpdfstring{$D$}{D}-space problem for charming spaces}
\author[X.-Y. Hu]{Xing-Yu Hu\textsuperscript{*}\kern-.15em}
\thanks{\textsuperscript{*}Corresponding author.}
\address{School of Mathematics and Statistics, Hanjiang Normal University, Shiyan, China}
\email{huxingyu@hjnu.edu.cn}

\author[Z.-Q. Yin]{Ze-Qi Yin}
\address{School of Mathematics and Statistics, Hanjiang Normal University, Shiyan, China}
\email{19371600912@163.com}
\subjclass[2020]{54D20, 54A25, 54D45}
\keywords{charming space, \(D\)-space, Lindel\"of \(\Sigma\)-space, dense kernel, closed core}
\date{}

\begin{document}

\begin{abstract}
Let $X$ be a charming space with a Lindel\"of $\Sigma$ kernel $Y$, and put $B=\overline{Y}^{\,X}$. We show that the question whether every charming space is a \(D\)-space can be reduced first to the dense-kernel case and then to a closed core. We define $\mathsf{Obs}_{cc}(Y,B)$ as the set of boundary points $x\in B\setminus Y$ such that, for every open neighborhood $U$ of $x$ in $X$, the intersection $U\cap Y$ is not countably compact, and put $H=\overline{\mathsf{Obs}_{cc}(Y,B)}^{\,B}$. We prove that $H\cap (B\setminus Y)=\mathsf{Obs}_{cc}(Y,B)$, and our main reduction theorem shows that $X$ is a \(D\)-space if and only if $H$ is. We also prove the following sufficient condition, where $\mathfrak d$ denotes the dominating number. If there is a closed set $S\subseteq B\setminus Y$ that can be covered by fewer than $\mathfrak d$ compact subsets and every point of $(B\setminus Y)\setminus S$ has an open neighborhood $U$ in $X$ for which $U\cap Y$ is countably compact, then $X$ is a \(D\)-space.
\end{abstract}
\maketitle

\section{Introduction}

The notion of a \(D\)-space goes back to van Douwen and Pfeffer \cite{vanDouwenPfeffer1979}. The question of whether every regular Lindel\"of space is a \(D\)-space remains open. See Gruenhage's survey \cite{GruenhageDsurvey} and \cite{PengHu2024} for background and a recent statement of the problem. For later work on local properties and remainders in compactifications, see also \cite{Arhangelskii2019Local}. Arhangel'ski\u{\i} introduced charming spaces in \cite{Arhangelskii2011} as a remainder-type generalization of Lindel\"of $\Sigma$ spaces, and further properties of charming spaces were studied in \cite{LiLinLin2017}. The corresponding question for charming spaces remains open.

Our principal result, Theorem~\ref{thm_cc_core_exact_reduction}, gives an exact reduction of this question to a closed core. Given a charming space $X$ with Lindel\"of $\Sigma$ kernel $Y$, we first pass to the closure $B=\overline{Y}^{\,X}$, where $Y$ is dense. Within $B$, we single out the boundary points $x\in B\setminus Y$ such that, for every open neighborhood $U$ of $x$ in $X$, the intersection $U\cap Y$ is not countably compact. These points form the \emph{countably compact obstruction set} $\mathsf{Obs}_{cc}(Y,B)$, and we put $H=\overline{\mathsf{Obs}_{cc}(Y,B)}^{\,B}$. The theorem proves that $X$ is a \(D\)-space if and only if $H$ is, while Lemma~\ref{lem_cc_core_structure} gives $H\cap (B\setminus Y)=\mathsf{Obs}_{cc}(Y,B)$. Theorem~\ref{thm_middlelayer_cc_small} handles a closed exceptional set $S\subseteq B\setminus Y$ that can be covered by fewer than $\mathfrak d$ compact subsets, provided that every point of $(B\setminus Y)\setminus S$ has an open neighborhood $U$ in $X$ for which $U\cap Y$ is countably compact.

We also derive restrictions on any closed core arising from a potential counterexample. In particular, Proposition~\ref{prop_countable_cd_closure_D} shows that the closure in $X$ of any nonempty countable closed discrete subset of $Y$ is a charming \(D\)-space, and Proposition~\ref{prop_counterexample_core_not_countably_generated} shows that a counterexample core $H$ cannot be the closure in $H$ of a countable closed discrete subset of $H\cap Y$. This motivates a conjecture on the trace $H\cap Y$. The final subsection establishes two elementary criteria for remainders and then applies known remainder theorems to topological groups and compactly-fibered coset spaces. These applications are not used in the reduction arguments. Compactly-fibered coset spaces arise in Pasynkov's theory of partial topological products \cite{Pasynkov1965}. For dichotomy-type results on quotients of topological groups and for recent work on related coset spaces, see \cite{Arhangelskii2017, ChenLingZhao2024}. For general background on topological groups and coset spaces, see \cite{ArhangelskiiTkachenko2008}.

\section{Preliminaries}

We assume familiarity with standard concepts from general topology and basic set theory. We refer to \cite{BartoszynskiJudah,Engelking} for background. For a broader account of Lindel\"of $\Sigma$ spaces, see \cite{Tkachuk2010}.

Throughout the paper, all spaces are Tychonoff.
By \cite[Theorem~3.1.8]{Engelking}, compact subspaces of a Hausdorff space are closed.

\begin{definition}
\cite{GruenhageDsurvey} A space $X$ is a \emph{\(D\)-space} if for every neighborhood assignment $\varphi\colon X\to\tau(X)$ with
$x\in\varphi(x)$ for all $x\in X$, there exists a closed discrete set $D\subseteq X$ such that
\[
X=\bigcup_{d\in D}\varphi(d).
\]
\end{definition}

By \cite[Theorem~5.2(a)]{GruenhageDsurvey}, every metrizable space is a \(D\)-space.

\begin{definition}\label{def_charming}
\cite[p.~93]{Arhangelskii2011} A space $X$ is \emph{charming} if it admits a Lindel\"of $\Sigma$ subspace $Y\subseteq X$ such that for every open neighborhood $U$ of $Y$ in $X$, the subspace $X\setminus U$ is Lindel\"of $\Sigma$.
Such a $Y$ is called a \emph{Lindel\"of $\Sigma$ kernel} of $X$.
\end{definition}

\begin{definition}\label{def_cl_co}
\cite[Definition~4.1]{LiLinLin2017} A charming space is \emph{CL charming} if it admits a closed Lindel\"of $\Sigma$ kernel.
A charming space is \emph{CO charming} if it admits a compact Lindel\"of $\Sigma$ kernel.
\end{definition}

By \cite[p.~494, Theorem]{Buzyakova2002}, every strong $\Sigma$ space is a \(D\)-space. In particular, every Lindel\"of $\Sigma$ space is a \(D\)-space. Peng \cite[Theorem~14]{Peng2010WeakMonolithic} showed that every weakly monotonically monolithic space is a \(D\)-space.

By \cite[Proposition~3.3]{Arhangelskii2011}, every charming space is Lindel\"of. We shall use this fact without further reference. In particular, if $X$ is a charming space, then every closed subspace of $X$ (such as the closure $B=\overline{Y}^{\,X}$ of any Lindel\"of $\Sigma$ kernel $Y$, or any further closed set considered in the arguments below) is Lindel\"of.

For a space $Z$, let $\covK(Z)$ denote the least cardinality of a cover of $Z$ by compact subsets of $Z$.
For $f,g\in\omega^\omega$, write $f\leq^* g$ if $f(n)\leq g(n)$ for all but finitely many $n\in\omega$.
The dominating number $\mathfrak d$ is the least cardinality of a set $\mathcal D\subseteq\omega^\omega$ such that for every $f\in\omega^\omega$ there exists $g\in\mathcal D$ with $f\leq^* g$.
No countable family is dominating. Indeed, if $\{g_k:k\in\omega\}\subseteq\omega^\omega$, then the function
\[
f(n)=\max_{k\leq n}g_k(n)+1
\]
satisfies $f\not\leq^*g_k$ for every $k\in\omega$. Hence $\mathfrak d>\omega$.

We shall use the identity
\[
\covK(\omega^\omega)=\mathfrak d.
\]
Indeed, every compact subset $K$ of $\omega^\omega$ has finite coordinate projections and is therefore bounded pointwise by some $g_K\in\omega^\omega$.
Thus, if $\mathcal C$ is a compact cover of $\omega^\omega$, then $\{g_K:K\in\mathcal C\}$ is a dominating family. Hence $\mathfrak d\leq|\mathcal C|$, and therefore $\mathfrak d\leq\covK(\omega^\omega)$.
Conversely, let $\mathcal D\subseteq\omega^\omega$ be a dominating family of cardinality $\mathfrak d$.
For $g\in\mathcal D$ and $s\in\omega^{<\omega}$, put
\[
K_{g,s}=\{f\in\omega^\omega:f\mathbin{\upharpoonright}|s|=s\text{ and }f(n)\leq g(n)\text{ whenever }n\geq |s|\}.
\]
The set $K_{g,s}$ is compact, since it is a product of singleton factors for $n<|s|$ and finite discrete factors for $n\geq |s|$.
If $f\in\omega^\omega$, choose $g\in\mathcal D$ and $N\in\omega$ such that $f(n)\leq g(n)$ whenever $n\geq N$. Then $f\in K_{g,f\mathbin{\upharpoonright}N}$, so these sets cover $\omega^\omega$.
Since $\omega^{<\omega}$ is countable and $\mathfrak d>\omega$, this is a compact cover of cardinality at most $\mathfrak d$.
Therefore $\covK(\omega^\omega)\leq\mathfrak d$.

\begin{definition}\label{def_Menger}
\cite{Hurewicz1925} A space $X$ is \emph{Menger} if for every sequence $(\mathcal U_n)_{n\in\omega}$ of open covers of $X$
there exist finite families $\mathcal V_n\subseteq \mathcal U_n$ such that
$X=\bigcup_{n\in\omega}\bigcup \mathcal V_n$.
\end{definition}

\begin{theorem}\label{thm_MengerD}
\cite[Corollary~2.7]{Aurichi2010} Every Menger space is a \(D\)-space.
\end{theorem}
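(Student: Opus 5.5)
The plan is to follow Aurichi's game-theoretic route. The first step is to replace the Menger property by its game version. In the Menger game on $X$, in inning $n$ player One plays an open cover $\mathcal U_n$ and player Two answers with a finite $\mathcal V_n\subseteq\mathcal U_n$; Two wins if $\bigcup_n\bigcup\mathcal V_n=X$. By Hurewicz's theorem as strengthened by Pawlikowski, $X$ is Menger if and only if One has no winning strategy in this game. I would quote this equivalence rather than reprove it. It then suffices to show: \emph{if $X$ is not a $D$-space, then One has a winning strategy}.

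So fix a neighbourhood assignment $\varphi$ that witnesses failure of the $D$-property. The strategy tracks a finite set $D_s\subseteq X$ along each partial play $s$, starting from $D_\emptyset=\emptyset$, and keeps $F_s=X\setminus\bigcup_{d\in D_s}\varphi(d)$, which is closed. Since $D_s$ is finite, hence closed discrete, the choice of $\varphi$ gives $F_s\neq\emptyset$. In response to $s$, One plays the cover
\[
\mathcal U_s=\Bigl\{\,\textstyle\bigcup_{d\in D_s}\varphi(d)\cup\bigcup_{x\in A}\varphi(x)\;:\;A\in[F_s]^{<\omega}\Bigr\},
\]
which covers $X$. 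Finite unions are used so that a finite answer of Two can be merged into a single member. When Two answers with finitely many members, coded by finite sets $A_1,\dots,A_k\subseteq F_s$, we set $D_{s^\frown}=D_s\cup A_1\cup\dots\cup A_k$. By construction, each newly added point lies outside $\varphi(d)$ for every earlier $d$.

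Now consider a full play against this strategy and put $D=\bigcup_n D_{s_n}$. Every set Two picks is contained in $\bigcup_{d\in D}\varphi(d)$. If Two won, $D$ would give a $\varphi$-cover of $X$, so it remains to show that $D$ is closed discrete, which would contradict the choice of $\varphi$.

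For discreteness, take $d\in D$ added at stage $n$. The open set $\varphi(d)$ contains no point added later, and only finitely many points were added up to stage $n$. Since the space is Hausdorff, $d$ is isolated in $D$.

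For closedness, take any $y\in X$. Because Two won, $y\in\varphi(d)$ for some $d$ added at a stage $n$, and $\varphi(d)$ meets $D$ only in the finite set $D_{s_n}$. Removing those finitely many points other than $y$ leaves a neighbourhood of $y$ meeting $D$ in at most $\{y\}$. Hence no point outside $D$ is an accumulation point.

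Thus every play won by Two would produce a closed discrete $D$ with $X=\bigcup_{d\in D}\varphi(d)$, which is impossible. So Two loses every play, the strategy is winning for One, and $X$ is not Menger.

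The main obstacle is the first step, the Hurewicz--Pawlikowski equivalence between the Menger property and One having no winning strategy. That step is a genuinely nontrivial theorem, whereas the combinatorics above is routine. One could try to avoid the game by building the sets $D_n$ level by level from a single sequence of covers. This does not work directly: the covers $\mathcal U_s$ depend on Two's earlier moves, so a strategy, and hence the game formulation, seems unavoidable. I would therefore cite Pawlikowski's theorem explicitly.
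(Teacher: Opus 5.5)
Your proof is correct and is essentially Aurichi's argument for the cited Corollary~2.7; the paper itself only quotes the result. As in Aurichi, the neighbourhood assignment becomes a strategy for One whose finite-union covers force any play won by Two to yield a set $D$ in which each $\varphi(d)$ meets $D$ in a finite set, so $D$ is a closed discrete kernel, and the Menger property is used in its game form. One correction: the equivalence between the Menger property and One having no winning strategy in the Menger game is Hurewicz's theorem, and Pawlikowski's theorem is the analogue for the Rothberger game, so cite Hurewicz here.
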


\begin{lemma}\label{lem_lt_d_compact_Menger}
\cite[Lemma~13]{Tall2011} If $X$ is Lindel\"of and $X$ is the union of $<\mathfrak d$ compact subspaces, then $X$ is Menger.
\end{lemma}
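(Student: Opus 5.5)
The plan is to prove the lemma directly from the definition of $\mathfrak d$: use Lindel\"ofness to make the covers countable, and then use the fact that a family of fewer than $\mathfrak d$ functions is not dominating. Let $X=\bigcup_{\alpha<\kappa}K_\alpha$ with each $K_\alpha$ compact and $\kappa<\mathfrak d$. Fix a sequence $(\mathcal U_n)_{n\in\omega}$ of open covers of $X$. Since $X$ is Lindel\"of, each $\mathcal U_n$ has a countable subcover, which we enumerate as $\{U_{n,k}:k\in\omega\}$, repeating sets if it is finite. It then suffices to find $g\in\omega^\omega$ such that the finite families $\mathcal V_n=\{U_{n,k}:k\leq g(n)\}$ satisfy $X=\bigcup_n\bigcup\mathcal V_n$.

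For each $\alpha<\kappa$ we define $f_\alpha\in\omega^\omega$ as follows. For each $n$, the set $K_\alpha$ is compact and covered by $\{U_{n,k}:k\in\omega\}$, so some finite initial segment covers it. Let $f_\alpha(n)$ be the least $m$ with $K_\alpha\subseteq\bigcup_{k\leq m}U_{n,k}$.

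The family $\{f_\alpha:\alpha<\kappa\}$ has cardinality less than $\mathfrak d$, so it is not dominating. If $\kappa=0$ then $X$ is empty and there is nothing to prove. Hence there is $g\in\omega^\omega$ such that $g\not\leq^* f_\alpha$ for every $\alpha$. This means that for each $\alpha$ the set $\{n: g(n)>f_\alpha(n)\}$ is infinite, and in particular nonempty.

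To check that the $\mathcal V_n$ cover $X$, let $x\in X$ and pick $\alpha$ with $x\in K_\alpha$. Choose $n$ with $g(n)\geq f_\alpha(n)$. Then $x\in K_\alpha\subseteq\bigcup_{k\leq f_\alpha(n)}U_{n,k}\subseteq\bigcup\mathcal V_n$. Each $\mathcal V_n$ is a finite subfamily of $\mathcal U_n$, so $X$ is Menger.

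There is no serious obstacle. The only points needing care are, first, that Lindel\"ofness is used to pass to countable enumerations, which is what makes the functions $f_\alpha$ meaningful. Second, the direction of the non-domination must be right: we need a single $g$ that escapes each $f_\alpha$ infinitely often, which is exactly the negation of $\{f_\alpha\}$ being dominating.
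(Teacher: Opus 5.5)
Your proof is correct. The paper gives no proof of this lemma and only cites \cite[Lemma~13]{Tall2011}; your argument is the standard direct proof of the fact. It uses Lindel\"ofness to pass to countable subcovers, assigns to each compact $K_\alpha$ the function $f_\alpha$, and takes a single $g$ with $g\not\leq^* f_\alpha$ for all $\alpha$, which exists because there are fewer than $\mathfrak d$ such functions. One cosmetic point: dispose of the trivial case $X=\varnothing$ before enumerating the subcovers, since an empty cover cannot be enumerated by repetition. You should do this directly rather than through $\kappa=0$, because $X$ may be empty even when $\kappa>0$.
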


It follows from \cite[p.~2576, after Proposition~1.2]{KubisOkunevSzeptycki2006} that, in Hausdorff spaces, the class of Lindel\"of $\Sigma$ spaces is closed under countable unions and under closed subspaces.

Let $\mathcal{K}(X)$ denote the family of all nonempty compact subsets of a space $X$.
A map $\Phi\colon M\to\mathcal{K}(X)$ is \emph{upper semicontinuous}, or \emph{USCO} for short,
if for every open $U\subseteq X$ the set
\[
\{m\in M:\ \Phi(m)\subseteq U\}
\]
is open in $M$.

We use the following nonempty-valued reformulation of \cite[Proposition~1.2(a) and~(c)]{KubisOkunevSzeptycki2006}.
\begin{lemma}\label{lem_Lsig_USCO}
A space $X$ is Lindel\"of $\Sigma$ if and only if there exist a second countable Tychonoff space $M$
and a USCO map $\Phi\colon M\to\mathcal{K}(X)$ such that
\[
X=\bigcup_{m\in M}\Phi(m).
\]
\end{lemma}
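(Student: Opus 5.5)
The plan is to derive the lemma directly from \cite[Proposition~1.2(a) and~(c)]{KubisOkunevSzeptycki2006}. That result characterizes Lindel\"of $\Sigma$ spaces as the images, under a compact-valued upper semicontinuous map, of a separable metrizable space, where the map is allowed to take the value $\emptyset$. The proof has two parts: matching the domain class, and removing or adding empty values.

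\emph{Domain class.} First I would note that second countable Tychonoff spaces coincide with separable metrizable spaces. A Tychonoff space is regular, so Urysohn's metrization theorem makes a second countable Tychonoff space separable metrizable. Conversely, a separable metrizable space is second countable and Tychonoff.

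\emph{Sufficiency.} Suppose $M$ and a nonempty-valued USCO map $\Phi\colon M\to\mathcal K(X)$ with $X=\bigcup_{m\in M}\Phi(m)$ are given. Then $\Phi$ is in particular a compact-valued usc map from a separable metrizable space covering $X$. The cited proposition then gives that $X$ is Lindel\"of $\Sigma$.

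\emph{Necessity.} Let $X$ be Lindel\"of $\Sigma$, and take from the cited proposition a separable metrizable $M_0$ and a compact-valued usc map $\Phi_0$ covering $X$, possibly with empty values.
\begin{itemize}
\item Put $M=\{m\in M_0:\Phi_0(m)\neq\emptyset\}$ and $\Phi=\Phi_0\mathbin{\upharpoonright}M$.
\item The space $M$ is a subspace of a second countable Tychonoff space, hence is itself second countable and Tychonoff.
\item For open $U\subseteq X$ we have $\{m\in M:\Phi(m)\subseteq U\}=M\cap\{m\in M_0:\Phi_0(m)\subseteq U\}$. This set is open in $M$, so $\Phi$ is USCO in the sense of the paper.
\item Points $m$ with $\Phi_0(m)=\emptyset$ contribute nothing to the union, so $\bigcup_{m\in M}\Phi(m)=X$.
\item The degenerate case $X=\emptyset$ is handled by $M=\emptyset$.
\end{itemize}

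The main obstacle is purely bookkeeping: checking exactly which formulation \cite{KubisOkunevSzeptycki2006} uses. The domain might be a subspace of $\omega^\omega$, or merely separable metrizable, and empty values might or might not be allowed; the two adjustments above cover either possibility.

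If the citation turned out to be inconvenient, I would instead prove necessity directly from Nagami's description. That description gives a cover $\mathcal C$ of $X$ by compact sets and a countable family $\mathcal N$ such that every $C\in\mathcal C$ and open $U\supseteq C$ admit some $N\in\mathcal N$ with $C\subseteq N\subseteq U$. The construction would then go as follows.
\begin{itemize}
\item Give $\mathcal N$ an enumeration $\{N_k:k\in\omega\}$.
\item Let $M\subseteq2^\omega$ consist of the characteristic functions of the sets $\{k:C\subseteq N_k\}$ for $C\in\mathcal C$.
\item Set $\Phi(m)=\bigcap\{N_k:m(k)=1\}$. Compactness of this set and upper semicontinuity would then be verified using the network property.
\end{itemize}
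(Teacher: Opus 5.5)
Your proposal is correct and is essentially the paper's proof. Both use Proposition~1.2 of Kubi\'s--Okunev--Szeptycki for the two directions, and for necessity both restrict the compact-valued usc map to the parameters with nonempty values. The only difference is cosmetic: the paper notes that this parameter set is closed, since its complement $\{m:p(m)\subseteq\varnothing\}$ is open by upper semicontinuity, whereas you use that every subspace of a second countable Tychonoff space is again second countable and Tychonoff, which is enough. The Nagami fallback is not needed.
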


\begin{proof}
In the cited source, all spaces are Tychonoff, but a compact-valued upper semicontinuous multivalued map may have empty values.
For the forward implication, condition~(c) of the cited proposition supplies a second countable Tychonoff space $M$ and a compact-valued upper semicontinuous multivalued map $p\colon M\to X$ such that $p(M)=X$.
Put
\[
M_0=\{m\in M:p(m)\neq\varnothing\}.
\]
The set $M\setminus M_0=\{m\in M:p(m)\subseteq\varnothing\}$ is open by upper semicontinuity, so $M_0$ is closed in $M$ and is therefore second countable and Tychonoff.
The restriction of $p$ to $M_0$ is a USCO map into $\mathcal{K}(X)$ whose values cover $X$.
Conversely, any USCO map with the stated properties satisfies condition~(c) of the cited proposition, so $X$ is Lindel\"of $\Sigma$.
\end{proof}

The next compactness lemma turns compact subsets of the parameter space into compact subsets of the represented space.

\begin{lemma}\label{lem_usco_compact_union}
Let $\Phi\colon M\to\mathcal{K}(X)$ be a USCO map and let $K\subseteq M$ be compact.
Then the saturation
\[
\Phi[K]=\bigcup_{m\in K}\Phi(m)
\]
is compact in $X$.
\end{lemma}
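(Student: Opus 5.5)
The plan is the standard open-cover argument: take an arbitrary open cover $\mathcal W$ of $\Phi[K]$ in $X$ and extract a finite subcover, using the compactness of each fibre $\Phi(m)$ together with upper semicontinuity to transfer finiteness back to the compact parameter set $K$.

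First, fix $m\in K$. Since $\Phi(m)$ is compact and is covered by $\mathcal W$, there is a finite subfamily $\mathcal W_m\subseteq\mathcal W$ with $\Phi(m)\subseteq W_m:=\bigcup\mathcal W_m$. The set $W_m$ is open in $X$, so by the USCO property the set
\[
O_m=\{m'\in M:\Phi(m')\subseteq W_m\}
\]
is open in $M$, and it contains $m$. Then $\{O_m\cap K:m\in K\}$ is an open cover of $K$ in the subspace topology.

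Next, compactness of $K$ gives finitely many $m_1,\dots,m_k\in K$ with $K\subseteq O_{m_1}\cup\dots\cup O_{m_k}$. For any $m'\in K$ we have $m'\in O_{m_i}$ for some $i$, so $\Phi(m')\subseteq W_{m_i}$. Hence $\Phi[K]\subseteq\bigcup_{i\le k}W_{m_i}$, and $\bigcup_{i\le k}\mathcal W_{m_i}$ is a finite subfamily of $\mathcal W$ covering $\Phi[K]$.

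The only point needing care is that compactness of a subspace is characterized by covers consisting of sets open in $X$. This is standard, and it is exactly what allows us to apply the USCO condition to the open set $W_m\subseteq X$. The case $K=\varnothing$ is trivial. Beyond that there is no real obstacle; the argument is a direct definition chase.
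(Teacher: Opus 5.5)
Your proposal is correct and follows essentially the same argument as the paper: you cover each fibre $\Phi(m)$ by a finite subfamily, use upper semicontinuity to get the open sets $O_m$ around each $m\in K$, and then use compactness of $K$ to extract finitely many of them. Your extra remarks on covers by sets open in $X$ and on the trivial case $K=\varnothing$ are harmless additions to the same proof.
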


\begin{proof}
Let $\mathcal U$ be an open cover of $\Phi[K]$.
For each $m\in K$, choose a finite family $\mathcal U_m\subseteq\mathcal U$ such that
\[
\Phi(m)\subseteq W_m=\bigcup\mathcal U_m.
\]
By upper semicontinuity, the set
\[
O_m=\{t\in M:\Phi(t)\subseteq W_m\}
\]
is an open neighborhood of $m$ in $M$.
Compactness of $K$ gives $m_1,\ldots,m_r\in K$ such that
\[
K\subseteq O_{m_1}\cup\cdots\cup O_{m_r}.
\]
Then
\[
\Phi[K]\subseteq W_{m_1}\cup\cdots\cup W_{m_r},
\]
so $\mathcal U$ has a finite subcover.
\end{proof}

\begin{corollary}\label{cor_usco_covK}
If
\[
X=\bigcup_{m\in M}\Phi(m)
\]
for a USCO map $\Phi\colon M\to\mathcal{K}(X)$, then
\[
\covK(X)\leq\covK(M).
\]
Consequently, if $M$ is $\sigma$-compact, then $X$ is $\sigma$-compact.
\end{corollary}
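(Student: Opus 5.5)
The plan is to deduce the corollary directly from Lemma~\ref{lem_usco_compact_union}. Take a cover $\mathcal C$ of $M$ by compact subsets of $M$ with $|\mathcal C|=\covK(M)$. If $M$ is empty then $X$ is empty and there is nothing to prove, so assume $M\neq\varnothing$.

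For each $K\in\mathcal C$, Lemma~\ref{lem_usco_compact_union} shows that the saturation $\Phi[K]=\bigcup_{m\in K}\Phi(m)$ is compact in $X$. Because $\mathcal C$ covers $M$, every $m\in M$ lies in some $K\in\mathcal C$, and then $\Phi(m)\subseteq\Phi[K]$. Together with $X=\bigcup_{m\in M}\Phi(m)$ this gives
\[
X=\bigcup_{K\in\mathcal C}\Phi[K].
\]
So $\{\Phi[K]:K\in\mathcal C\}$ is a cover of $X$ by compact subsets of $X$, and its cardinality is at most $|\mathcal C|$. Hence $\covK(X)\leq\covK(M)$.

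For the second assertion, suppose $M$ is $\sigma$-compact. Then $\covK(M)\leq\omega$, so $\covK(X)\leq\omega$, which means $X$ is a countable union of compact subsets, that is, $\sigma$-compact.

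There is no real obstacle: the compactness of the saturations has already been established in Lemma~\ref{lem_usco_compact_union}, and the rest is bookkeeping. The only minor points are the empty case, and the convention that $\covK$ counts covers by compact subsets of the space itself, which is satisfied because each $\Phi[K]$ is a subset of $X$.
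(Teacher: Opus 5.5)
Your proposal is correct and matches the paper's proof: you apply Lemma~\ref{lem_usco_compact_union} to each member of a compact cover of $M$, note that the resulting saturations cover $X$, and read off both the inequality and the $\sigma$-compact consequence. The only cosmetic difference is that you start from a cover of minimal size, whereas the paper starts from an arbitrary compact cover and then minimizes.
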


\begin{proof}
Let $\{K_i:i\in I\}$ be a compact cover of $M$.
By Lemma~\ref{lem_usco_compact_union}, each $\Phi[K_i]$ is compact in $X$, and
\[
X=\bigcup_{i\in I}\Phi[K_i].
\]
Hence $\covK(X)\leq |I|$.
Taking the minimum over all compact covers of $M$ gives
\[
\covK(X)\leq\covK(M).
\]
If $M$ is $\sigma$-compact, take $I=\omega$ and obtain that $X$ is $\sigma$-compact.
\end{proof}

For comparison with the closures used below, we note that $\covK$ need not decrease when a set is replaced by its closure, even in Hausdorff spaces.

\begin{remark}\label{rem_covK_not_closure_monotone}
The inequality $\covK(\overline{A}^{\,Z})\le\covK(A)$ need not hold even for Hausdorff spaces. Let $Z=\omega^\omega$ and let $A$ be a countable dense subset of $Z$. Then $\covK(A)\le\omega$, while the identity above gives $\covK(Z)=\mathfrak d>\omega$.
\end{remark}

\begin{definition}\label{def_LC-locus}
For a space $X$, an open set $V\subseteq X$ is \emph{relatively compact} if its closure $\overline{V}^{\,X}$ is compact.
The \emph{locally compact locus} of $X$ is
\[
\mathrm{LC}(X)=\{x\in X\colon x\text{ has a relatively compact open neighborhood in }X\}.
\]
\end{definition}

The locally compact locus $\mathrm{LC}(X)$ is open in $X$. Indeed, if $x\in\mathrm{LC}(X)$ and $V$ is a relatively compact open neighborhood of $x$, then every point of $V$ has $V$ as a relatively compact open neighborhood, so $V\subseteq \mathrm{LC}(X)$.

\begin{definition}\label{def_NLC}
For a space $X$ we write
\[
\mathrm{NLC}(X)=X\setminus \mathrm{LC}(X).
\]
\end{definition}

We shall use the following standard fact.

\begin{lemma}\label{lem_lc_lindelof_sigmacompact}
If $Z$ is Hausdorff, locally compact and Lindel\"of, then $Z$ is $\sigma$-compact.
\end{lemma}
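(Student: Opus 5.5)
The plan is to cover $Z$ by relatively compact open sets and then use Lindel\"ofness to extract a countable subcover; the closures of these countably many sets will be the required compact pieces.

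First, for each $z\in Z$, local compactness (in the sense of Definition~\ref{def_LC-locus}, i.e.\ $\mathrm{LC}(Z)=Z$) gives an open neighborhood $V_z$ of $z$ such that $\overline{V_z}^{\,Z}$ is compact. If local compactness is instead taken to mean only that each point has some compact neighborhood $K_z$, I will set $V_z=\operatorname{int}K_z$. Since $Z$ is Hausdorff, $K_z$ is closed (by \cite[Theorem~3.1.8]{Engelking}), so $\overline{V_z}^{\,Z}\subseteq K_z$. Being a closed subset of a compact set, $\overline{V_z}^{\,Z}$ is then compact. Either way, $\{V_z:z\in Z\}$ is an open cover of $Z$ consisting of relatively compact sets.

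Second, because $Z$ is Lindel\"of, there is a countable set $\{z_n:n\in\omega\}\subseteq Z$ with $Z=\bigcup_{n\in\omega}V_{z_n}$. Hence
\[
Z=\bigcup_{n\in\omega}\overline{V_{z_n}}^{\,Z}
\]
is a countable union of compact sets, so $Z$ is $\sigma$-compact. If $Z$ is empty, the statement is trivial.

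I do not expect a genuine obstacle. The only point requiring care is reconciling the two common definitions of local compactness, and Hausdorffness is used exactly to pass from a compact neighborhood to a relatively compact open one.
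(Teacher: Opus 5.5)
Your proof is correct and matches the paper's argument: choose relatively compact open neighborhoods, use Lindel\"ofness to extract a countable subcover, and write $Z$ as the union of their compact closures. Your extra remark reconciling the ``compact neighborhood'' definition of local compactness is correct (in a Hausdorff space compact sets are closed) and merely makes explicit a point the paper leaves implicit.
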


\begin{proof}
For each $z\in Z$ choose an open neighborhood $V_z$ with compact closure in $Z$.
The family $\{V_z:z\in Z\}$ is an open cover of $Z$, hence has a countable subcover
$\{V_{z_n}:n\in\omega\}$.
Then $Z=\bigcup_{n\in\omega}\overline{V_{z_n}}^{\,Z}$ is a countable union of compact sets.
\end{proof}

We shall repeatedly use the following elementary union lemma for closed discrete sets.

\begin{lemma}\label{lem_disjoint_cd_union}
Let $X$ be a topological space and let $D_1,D_2\subseteq X$ be disjoint sets, each closed discrete in $X$. Then $D_1\cup D_2$ is closed discrete in $X$.
\end{lemma}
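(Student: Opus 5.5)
The statement is purely local, so the plan is to check the two defining conditions of a closed discrete set directly. Recall that a set $D\subseteq X$ is closed discrete in $X$ exactly when every point $x\in X$ has an open neighborhood $W$ with $|W\cap D|\leq 1$. So for a fixed $x\in X$ I will produce such a neighborhood for $D_1\cup D_2$.

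Fix $x\in X$. Since $D_1$ is closed discrete, there is an open $W_1\ni x$ with $|W_1\cap D_1|\le 1$, and moreover $W_1\cap D_1\subseteq\{x\}$. To get this, start from any such $W_1$. If its unique point $d$ of $D_1$ differs from $x$, replace $W_1$ by $W_1\setminus\{d\}$. This set is open, because $\{d\}=D_1\cap\{d\}$ is closed: it is a subset of the closed discrete set $D_1$, and every subset of a closed discrete set is closed. In the same way choose an open $W_2\ni x$ with $W_2\cap D_2\subseteq\{x\}$.

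Now put $W=W_1\cap W_2$. Then $W\cap(D_1\cup D_2)\subseteq\{x\}$, so $W$ meets $D_1\cup D_2$ in at most one point. This gives the local condition at every point of $X$, so $D_1\cup D_2$ is closed and discrete at once. Disjointness is not really needed. It only guarantees that if $x\in D_1\cup D_2$, then $x$ lies in exactly one of the two sets, which is harmless either way.

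The only step needing care is the shrinking to $W_i\cap D_i\subseteq\{x\}$. Without it, $W_1$ could contain one point of $D_1$ and $W_2$ a different point of $D_2$, and then $W$ would contain two points of the union. The fact that subsets of closed discrete sets are closed holds in any topological space, so the argument requires no separation axioms.
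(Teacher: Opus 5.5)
Your argument is correct, and it takes a slightly different route from the paper's. The paper gets closedness for free, since a union of two closed sets is closed, and checks discreteness only at points of $D_1\cup D_2$. For $x\in D_1$ it takes $O_x$ with $O_x\cap D_1=\{x\}$ and removes the closed set $D_2$; this is where disjointness ($x\notin D_2$) is used. You instead verify one local condition at every point of $X$ and never use disjointness. So you get the slightly more general fact that any two (hence finitely many) closed discrete sets have closed discrete union. The paper's argument also yields this once you intersect the two neighbourhoods at points of $D_1\cap D_2$, so the gain is modest.

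One correction is needed, because the criterion you ``recall'' is false in the generality you claim. Take the Sierpi\'nski space $\{0,1\}$ with open sets $\varnothing,\{1\},\{0,1\}$. The set $D=\{1\}$ meets every open set in at most one point, but it is not closed. The condition $|W\cap D|\le 1$ gives closedness only in $T_1$ spaces. That would be enough under the paper's standing Tychonoff convention, but you explicitly say that no separation axioms are needed.

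The criterion that holds in an arbitrary space is this: every $x\in X$ has an open $W\ni x$ with $W\cap D\subseteq\{x\}$. If $x\notin D$, this gives $W\cap D=\varnothing$, so $D$ is closed. If $x\in D$, it gives $W\cap D=\{x\}$, so $D$ is discrete. Your set $W=W_1\cap W_2$ already satisfies $W\cap(D_1\cup D_2)\subseteq\{x\}$, so conclude from that directly rather than weakening it to ``at most one point''. With this criterion the shrinking step is also unnecessary. For $x\in D_i$, discreteness already gives $W_i\cap D_i=\{x\}$, and for $x\notin D_i$ you can take $W_i=X\setminus D_i$.
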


\begin{proof}
The union $D_1\cup D_2$ is closed in $X$ as a union of two closed sets.
For each $x\in D_1$, choose an open set $O_x$ in $X$ with $O_x\cap D_1=\{x\}$. Since $D_2$ is closed in $X$ and $x\notin D_2$, the set $O_x\setminus D_2$ is an open neighborhood of $x$ whose intersection with $D_1\cup D_2$ is $\{x\}$.
The symmetric argument handles $x\in D_2$. Hence $D_1\cup D_2$ is discrete in $X$, and therefore closed discrete in $X$.
\end{proof}

\section{Main results}\label{sec_main}

\subsection{Basic reductions}

We use a gluing lemma for \(D\)-spaces.

\begin{lemma}\label{lem_glue}
Let $X$ be a space and let $B\subseteq X$ be closed.
Assume that $B$ is a \(D\)-space and that for every open set $U\subseteq X$ with $B\subseteq U$,
the closed subspace $X\setminus U$ is a \(D\)-space.
Then $X$ is a \(D\)-space.
\end{lemma}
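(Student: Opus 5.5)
Given a neighborhood assignment $\varphi\colon X\to\tau(X)$, I will first handle $B$ and then use the hypothesis on the complement of an open set containing $B$.

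First, restrict $\varphi$ to $B$, setting $\varphi_B(b)=\varphi(b)\cap B$. This is a neighborhood assignment on the subspace $B$. Since $B$ is a \(D\)-space, there is a set $D_1\subseteq B$, closed discrete in $B$, with $B=\bigcup_{d\in D_1}\varphi_B(d)$. Because $B$ is closed in $X$, the set $D_1$ is also closed discrete in $X$: a subset of $X$ that is closed in the closed set $B$ is closed in $X$, and discreteness is intrinsic to the subspace topology. Next put
\[
U=\bigcup_{d\in D_1}\varphi(d).
\]
This set is open in $X$ and contains $B$, since each $b\in B$ lies in $\varphi_B(d)\subseteq\varphi(d)$ for some $d\in D_1$.

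By hypothesis, $F=X\setminus U$ is a closed \(D\)-space. Restrict $\varphi$ to $F$ in the same way, setting $\varphi_F(x)=\varphi(x)\cap F$. This gives $D_2\subseteq F$, closed discrete in $F$ and hence in $X$, with $F\subseteq\bigcup_{d\in D_2}\varphi(d)$.

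Now $D_1\subseteq B\subseteq U$ and $D_2\subseteq F=X\setminus U$, so $D_1$ and $D_2$ are disjoint. Lemma~\ref{lem_disjoint_cd_union} then shows that $D=D_1\cup D_2$ is closed discrete in $X$. Finally, $X=U\cup F\subseteq\bigcup_{d\in D}\varphi(d)$, so $X$ is a \(D\)-space.

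There is no real obstacle here. The only point that needs care is why $D_1$ and $D_2$ remain closed discrete in $X$, and that follows directly from the closedness of $B$ and of $F$.
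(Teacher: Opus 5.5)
Your proposal is correct and follows essentially the same route as the paper's proof. You restrict $\varphi$ to $B$, take $U=\bigcup_{d\in D_1}\varphi(d)$, apply the hypothesis to the closed set $X\setminus U$, and combine the two disjoint closed discrete sets via Lemma~\ref{lem_disjoint_cd_union}.
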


\begin{proof}
Let $\varphi\colon X\to\tau(X)$ be a neighborhood assignment.
Restrict $\varphi$ to $B$ by $\varphi_B(b)=\varphi(b)\cap B$.
Choose a closed discrete $D_B\subseteq B$ with
\[
B\subseteq\bigcup_{d\in D_B}\varphi_B(d)\subseteq\bigcup_{d\in D_B}\varphi(d).
\]
Put $U=\bigcup_{d\in D_B}\varphi(d)$ and let $A=X\setminus U$.
Then $A$ is closed in $X$ and, by assumption, a \(D\)-space. Since $B\subseteq U$, we have
\[
A\cap B=\varnothing.
\]
Restrict $\varphi$ to $A$ by $\varphi_A(a)=\varphi(a)\cap A$.
Choose a closed discrete $D_A\subseteq A$ with
\[
A\subseteq\bigcup_{a\in D_A}\varphi_A(a)\subseteq\bigcup_{a\in D_A}\varphi(a).
\]
Since $D_B$ is closed in the closed subspace $B$ of $X$, it is closed in $X$. Similarly, $D_A$ is closed in $X$. It remains to verify discreteness in $X$. Given $x\in D_B$, pick an open set $U_x$ in $B$ with $U_x\cap D_B=\{x\}$ and write $U_x=B\cap O_x$ for some open set $O_x$ in $X$. Since $D_B\subseteq B$, we have $O_x\cap D_B=\{x\}$, so $D_B$ is discrete in $X$. The same argument applies to $D_A$. Hence both $D_B$ and $D_A$ are closed discrete in $X$.
Since $D_B\subseteq B$ and $D_A\subseteq A$ with $A\cap B=\varnothing$, the sets $D_B$ and $D_A$ are disjoint.
By Lemma~\ref{lem_disjoint_cd_union}, $D_B\cup D_A$ is closed discrete in $X$. Since
\[
X\subseteq U\cup A\subseteq \bigcup_{d\in D_B}\varphi(d)\cup \bigcup_{a\in D_A}\varphi(a),
\]
it follows that $X$ is a \(D\)-space.
\end{proof}

\begin{proposition}\label{prop_reduction_charming}
Let $X$ be a charming space and let $Y$ be a Lindel\"of $\Sigma$ kernel of $X$.
Put $B=\overline{Y}^{\,X}$.
If $B$ is a \(D\)-space, then $X$ is a \(D\)-space.
\end{proposition}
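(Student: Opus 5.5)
The plan is to apply the gluing lemma, Lemma~\ref{lem_glue}, with the closed set $B=\overline{Y}^{\,X}$. The hypothesis already makes $B$ a \(D\)-space. What remains is to check that for every open set $U\subseteq X$ with $B\subseteq U$, the closed subspace $X\setminus U$ is a \(D\)-space.

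Fix such an open $U$. Since $Y\subseteq B\subseteq U$, the set $U$ is an open neighborhood of $Y$ in $X$. By Definition~\ref{def_charming}, the subspace $X\setminus U$ is therefore Lindel\"of $\Sigma$. We are assuming all spaces are Tychonoff, so we may invoke Buzyakova's theorem quoted in the preliminaries: every Lindel\"of $\Sigma$ space is a \(D\)-space. Hence $X\setminus U$ is a \(D\)-space.

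Both hypotheses of Lemma~\ref{lem_glue} now hold, so $X$ is a \(D\)-space.

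I do not expect a real obstacle here. The only point to check is that a neighborhood of $B$ is in particular a neighborhood of $Y$. This is exactly why one passes to the closure $B$ rather than working with $Y$ itself: $Y$ need not be closed, and Lemma~\ref{lem_glue} requires a closed set.
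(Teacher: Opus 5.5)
Your proposal is correct and matches the paper's proof essentially verbatim. The paper likewise observes that any open $U\supseteq B$ contains $Y$, so $X\setminus U$ is Lindel\"of $\Sigma$ and hence a \(D\)-space by Buzyakova's theorem. It then concludes with Lemma~\ref{lem_glue} applied to the closed set $B$.
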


\begin{proof}
Let $U$ be an open subset of $X$ with $B\subseteq U$.
Then $Y\subseteq U$, so $X\setminus U$ is Lindel\"of $\Sigma$ by Definition~\ref{def_charming}.
By \cite[p.~494, Theorem]{Buzyakova2002}, $X\setminus U$ is a \(D\)-space.
The conclusion follows from Lemma~\ref{lem_glue}.
\end{proof}

\begin{corollary}\label{cor_metrizable_kernel_closure}
Let $X$ be a charming space with a Lindel\"of $\Sigma$ kernel $Y$, and put $B=\overline{Y}^{\,X}$.
If $B$ is metrizable, then $X$ is a \(D\)-space.
\end{corollary}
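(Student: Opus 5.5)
The corollary follows directly from Proposition~\ref{prop_reduction_charming}. That proposition says that if $B=\overline{Y}^{\,X}$ is a \(D\)-space, then $X$ is a \(D\)-space. So it suffices to check that $B$ is a \(D\)-space under the hypothesis that $B$ is metrizable.

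For this step we use the fact recorded in the preliminaries, from \cite[Theorem~5.2(a)]{GruenhageDsurvey}: every metrizable space is a \(D\)-space. The space $B$ is metrizable by assumption, so $B$ is a \(D\)-space.

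The plan therefore has two steps. First, invoke the cited theorem to conclude that $B$ is a \(D\)-space. Second, apply Proposition~\ref{prop_reduction_charming} to $X$, $Y$ and $B$ to conclude that $X$ is a \(D\)-space.

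The only point to check is that the \(D\)-property of $B$ is meant intrinsically, for neighborhood assignments on $B$ itself. This is exactly what Proposition~\ref{prop_reduction_charming} requires, through Lemma~\ref{lem_glue}. So there is no real obstacle.

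One could also argue more indirectly. A metrizable Lindel\"of space is second countable and hence Lindel\"of $\Sigma$, and $B$ is Lindel\"of because charming spaces are Lindel\"of. Then Buzyakova's theorem would also give that $B$ is a \(D\)-space. The direct citation is simpler, so we use it.
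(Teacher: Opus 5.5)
Your proposal is correct and matches the paper's proof: it uses the cited fact that every metrizable space is a \(D\)-space to conclude that \(B\) is a \(D\)-space, and then applies Proposition~\ref{prop_reduction_charming}. Your alternative route via second countability and Buzyakova's theorem also works, but it is not needed.
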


\begin{proof}
By \cite[Theorem~5.2(a)]{GruenhageDsurvey}, every metrizable space is a \(D\)-space, so $B$ is a \(D\)-space.
The conclusion follows from Proposition~\ref{prop_reduction_charming}.
\end{proof}

Closed kernels are handled directly, since Lindel\"of $\Sigma$ spaces are \(D\)-spaces.

\begin{corollary}\label{cor_closed_kernel_D}
Every CL charming space is a \(D\)-space.
\end{corollary}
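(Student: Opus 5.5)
Let $X$ be a CL charming space, and fix a closed Lindel\"of $\Sigma$ kernel $Y$ of $X$ as in Definition~\ref{def_cl_co}. The plan is to apply Proposition~\ref{prop_reduction_charming} directly, since $Y$ coincides with its own closure.

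First, since $Y$ is closed in $X$, we have $B=\overline{Y}^{\,X}=Y$. Next, $Y$ is Lindel\"of $\Sigma$ by the definition of a kernel. By \cite[p.~494, Theorem]{Buzyakova2002}, every Lindel\"of $\Sigma$ space is a \(D\)-space, so $B$ is a \(D\)-space. Proposition~\ref{prop_reduction_charming} then yields that $X$ is a \(D\)-space.

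Alternatively, one can invoke Lemma~\ref{lem_glue} directly with the closed set $B=Y$. For every open $U\supseteq Y$, the set $X\setminus U$ is Lindel\"of $\Sigma$ by Definition~\ref{def_charming}, hence a \(D\)-space by Buzyakova's theorem. Both hypotheses of the lemma are then met.

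There is no real obstacle here. The only point to check is that closedness of the kernel makes $B$ equal to $Y$ itself. Because of this, none of the later machinery (the obstruction set $\mathsf{Obs}_{cc}(Y,B)$ or $\mathfrak d$-arguments) is needed.
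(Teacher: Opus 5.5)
Your proposal is correct and matches the paper: your alternative argument, applying Lemma~\ref{lem_glue} with $B=Y$ and using Buzyakova's theorem for both $Y$ and $X\setminus U$, is exactly the paper's proof. Your primary route through Proposition~\ref{prop_reduction_charming}, using $\overline{Y}^{\,X}=Y$, is the same argument one step removed, since that proposition is itself proved by Lemma~\ref{lem_glue}.
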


\begin{proof}
Let $X$ be CL charming and let $Y$ be a closed Lindel\"of $\Sigma$ kernel of $X$.
The space $Y$ is Lindel\"of $\Sigma$. By \cite[p.~494, Theorem]{Buzyakova2002}, it is a \(D\)-space.
If $U$ is open in $X$ and $Y\subseteq U$, then $X\setminus U$ is Lindel\"of $\Sigma$ by Definition~\ref{def_charming}. By \cite[p.~494, Theorem]{Buzyakova2002}, it is a \(D\)-space.
The conclusion follows from Lemma~\ref{lem_glue} with $B=Y$.
\end{proof}

In particular, every CO charming space is a \(D\)-space, since every compact kernel is closed.

Corollary~\ref{cor_closed_kernel_D} shows that the main difficulty arises when kernels are not closed. A closed discrete set in the kernel need not be closed in the ambient space, and the basic obstruction comes from accumulation at points of $\overline{Y}^{\,X}\setminus Y$. For example, in $X=\mathbb R$ with $Y=(0,1)$, the set $\{1/(n+2)\colon n\in\omega\}$ is closed and discrete in $Y$ but not closed in $X$.

\medskip
We now consider the size of the closure $B=\overline{Y}^{\,X}$.

\begin{theorem}\label{thm_closure-covK}
Let $X$ be a charming space with a Lindel\"of $\Sigma$ kernel $Y$, and put $B=\overline{Y}^{\,X}$.
If $\covK(B)<\mathfrak d$, then $X$ is a \(D\)-space.
\end{theorem}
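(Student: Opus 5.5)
The plan is to reduce, via Proposition~\ref{prop_reduction_charming}, to showing that $B$ itself is a \(D\)-space, and then to obtain this from the Menger property.

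First, $B$ is a closed subspace of the charming space $X$. Every charming space is Lindel\"of by \cite[Proposition~3.3]{Arhangelskii2011}, and Lindel\"ofness passes to closed subspaces, so $B$ is Lindel\"of.

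Second, the hypothesis $\covK(B)<\mathfrak d$ says that $B$ is the union of fewer than $\mathfrak d$ compact subspaces. These are compact subsets of $B$, hence compact subspaces in the sense required by Lemma~\ref{lem_lt_d_compact_Menger}. That lemma therefore gives that $B$ is Menger.

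Third, Theorem~\ref{thm_MengerD} shows that $B$ is a \(D\)-space. Proposition~\ref{prop_reduction_charming} then yields that $X$ is a \(D\)-space.

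There is essentially no obstacle here. The only point to check is that the cited results apply in our setting: all spaces are Tychonoff, and Lemma~\ref{lem_lt_d_compact_Menger} needs only Lindel\"ofness together with a cover by fewer than $\mathfrak d$ compact sets. If any care is needed, it is in noting that $\covK(B)$ is computed with compact subsets of $B$, which is exactly what the lemma uses.

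The content of the theorem thus rests entirely on the gluing Lemma~\ref{lem_glue}, already packaged in Proposition~\ref{prop_reduction_charming}. That lemma handles the complement of any neighbourhood of $B$ via Buzyakova's theorem.
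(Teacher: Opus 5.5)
Your proof is correct and follows the paper's argument exactly. $B$ is Lindel\"of as a closed subspace of the charming space $X$, so it is Menger by Lemma~\ref{lem_lt_d_compact_Menger}, hence a \(D\)-space by Theorem~\ref{thm_MengerD}, and Proposition~\ref{prop_reduction_charming} then gives that $X$ is a \(D\)-space.
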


\begin{proof}
The closed subspace $B$ of the Lindel\"of space $X$ is Lindel\"of.
If $\covK(B)<\mathfrak d$, then $B$ is the union of $<\mathfrak d$ compact subspaces.
By Lemma~\ref{lem_lt_d_compact_Menger}, the space $B$ is Menger.
By Theorem~\ref{thm_MengerD}, the space $B$ is a \(D\)-space.
The conclusion follows from Proposition~\ref{prop_reduction_charming}.
\end{proof}

Theorem~\ref{thm_closure-covK} handles several basic cases. More generally, since
\[
B=Y\cup(B\setminus Y),
\]
one has
\[
\covK(B)\le \covK(Y)+\covK(B\setminus Y).
\]
Hence, if $\covK(Y)<\mathfrak d$ and $\covK(B\setminus Y)<\mathfrak d$, then $\covK(B)<\mathfrak d$, so $X$ is a \(D\)-space by Theorem~\ref{thm_closure-covK}. In particular, the same conclusion holds whenever $|B|<\mathfrak d$, or whenever $B$ is $\sigma$-compact.

\begin{lemma}\label{lem_lc_part_kernel_closure}
Let $X$ be a charming space with a Lindel\"of $\Sigma$ kernel $Y$ and put $B=\overline{Y}^{\,X}$.
If $L=B\cap \mathrm{LC}(X)$ is Lindel\"of, then $L$ is $\sigma$-compact.
\end{lemma}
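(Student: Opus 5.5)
The plan is to cover $L$ by sets that are relatively compact in $X$ and whose closures stay inside $\mathrm{LC}(X)$, and then use the Lindel\"of property of $L$ to pass to countably many of them. This is the idea of Lemma~\ref{lem_lc_lindelof_sigmacompact}. We cannot quote that lemma directly, though, because we do not know in advance that $L$ is locally compact as a subspace.

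The difficulty is the following. A point $x\in L$ has an open neighborhood $V$ in $X$ with compact closure $\overline{V}^{\,X}$. However, the set $\overline{V}^{\,X}\cap B$ is compact but need not lie inside $L$: the closure may reach points of $B\setminus\mathrm{LC}(X)$. So we shrink $V$ by regularity before taking closures.

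The construction goes as follows.
\begin{itemize}
\item Fix $x\in L$ and a relatively compact open neighborhood $V_x$ of $x$ in $X$.
\item Since $X$ is Tychonoff, hence regular, choose an open $W_x$ in $X$ with $x\in W_x\subseteq\overline{W_x}^{\,X}\subseteq V_x$.
\item The set $\overline{W_x}^{\,X}$ is a closed subset of the compact set $\overline{V_x}^{\,X}$, so it is compact.
\item Every point of $V_x$ has $V_x$ as a relatively compact open neighborhood, so $V_x\subseteq\mathrm{LC}(X)$, as observed after Definition~\ref{def_LC-locus}.
\end{itemize}
Hence $K_x=\overline{W_x}^{\,X}\cap B$ is a closed subset of a compact set, and therefore compact, since $B$ is closed in $X$. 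Moreover $K_x\subseteq V_x\cap B\subseteq \mathrm{LC}(X)\cap B=L$.

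The sets $W_x\cap B$, for $x\in L$, are open in the subspace $L$ and cover $L$. Since $L$ is Lindel\"of, countably many of them, say $W_{x_n}\cap B$ for $n\in\omega$, already cover $L$. Then
\[
L=\bigcup_{n\in\omega}K_{x_n}
\]
is a countable union of compact subsets of $L$, so $L$ is $\sigma$-compact. The only real obstacle is the one noted above, keeping the compact pieces inside $L$, and the regularity shrinking handles it.
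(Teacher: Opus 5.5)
Your proof is correct. It is essentially the paper's argument written out in one piece: the paper uses the same regularity shrinking to show that $\mathrm{LC}(X)$, and hence its closed subset $L$, is locally compact, and then quotes Lemma~\ref{lem_lc_lindelof_sigmacompact}, whereas you run that lemma's Lindel\"of extraction directly on the compact sets $\overline{W_x}^{\,X}\cap B\subseteq L$. Those sets are compact neighborhoods in $L$, so they already establish the local compactness that would let you quote the lemma after all.
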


\begin{proof}
We first observe that the open subspace $\mathrm{LC}(X)$ of the Tychonoff space $X$ is itself locally compact and Hausdorff.
Indeed, fix $x\in\mathrm{LC}(X)$ and choose an open neighborhood $W$ of $x$ in $X$ with $\overline{W}^{\,X}$ compact.
Since $\mathrm{LC}(X)$ is open in $X$ and $X$ is regular, we may shrink $W$ so that $\overline{W}^{\,X}\subseteq \mathrm{LC}(X)$. Then $\overline{W}^{\,X}$ is a compact neighborhood of $x$ in $\mathrm{LC}(X)$.

Since $B$ is closed in $X$, the set $L=B\cap \mathrm{LC}(X)$ is closed in $\mathrm{LC}(X)$, and hence $L$ is locally compact and Hausdorff.
By hypothesis, $L$ is Lindel\"of, so Lemma~\ref{lem_lc_lindelof_sigmacompact} implies that $L$ is $\sigma$-compact.
\end{proof}

\smallskip
The next proposition handles the case where the boundary is contained in an $F_\sigma$ subset of $B$ that lies in the locally compact locus, without using compact-covering arguments.

\begin{proposition}\label{prop_LC_fsigma}
Let $X$ be a charming space with a Lindel\"of $\Sigma$ kernel $Y$ and put $B=\overline{Y}^{\,X}$.
Assume that there are closed sets $F_n\subseteq B$ for $n\in\omega$ such that
$B\setminus Y\subseteq \bigcup_{n\in\omega}F_n\subseteq \mathrm{LC}(B)$.
Then $X$ is a \(D\)-space.
\end{proposition}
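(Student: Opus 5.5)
The plan is to show that $B$ is itself Lindel\"of $\Sigma$. Then $B$ is a \(D\)-space by \cite[p.~494, Theorem]{Buzyakova2002}, and Proposition~\ref{prop_reduction_charming} gives that $X$ is a \(D\)-space. Since each $F_n\subseteq B$ and $B\setminus Y\subseteq\bigcup_{n}F_n$, we have
\[
B=Y\cup\bigcup_{n\in\omega}F_n .
\]
By the fact quoted from \cite{KubisOkunevSzeptycki2006}, Lindel\"of $\Sigma$ spaces are closed under countable unions in Hausdorff spaces. So it suffices to show that each $F_n$ is $\sigma$-compact. Then $B$ is a countable union of $Y$ and countably many compact sets, each of which is Lindel\"of $\Sigma$.

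First I show that $\mathrm{LC}(B)$ is an open, locally compact Hausdorff subspace of $B$. This repeats the first paragraph of the proof of Lemma~\ref{lem_lc_part_kernel_closure} with $B$ in place of $X$. Openness was observed after Definition~\ref{def_LC-locus}. For local compactness, fix $x\in\mathrm{LC}(B)$ and take an open $W\ni x$ in $B$ with compact closure. Using regularity of $B$, shrink $W$ so that $\overline{W}^{\,B}\subseteq\mathrm{LC}(B)$. Then $\overline{W}^{\,B}$ is a compact neighbourhood of $x$ inside $\mathrm{LC}(B)$.

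Next, each $F_n$ is closed in $B$ and $F_n\subseteq\mathrm{LC}(B)$, so $F_n$ is closed in $\mathrm{LC}(B)$. A closed subspace of a locally compact Hausdorff space is locally compact. Moreover, $F_n$ is closed in $B$, which is closed in the Lindel\"of space $X$, so $F_n$ is Lindel\"of. Lemma~\ref{lem_lc_lindelof_sigmacompact} then shows that $F_n$ is $\sigma$-compact.

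None of these steps should be a real obstacle. The only point needing care is the local compactness of $F_n$. This uses that $F_n$ is closed in all of $B$, not merely that $F_n$ lies in $\mathrm{LC}(B)$. That hypothesis is what lets the argument bypass the closure phenomenon of Remark~\ref{rem_covK_not_closure_monotone}.
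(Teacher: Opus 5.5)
Your proposal is correct and follows essentially the paper's proof. You show each $F_n$ is Lindel\"of and locally compact, hence $\sigma$-compact, so $B=Y\cup\bigcup_{n}F_n$ is Lindel\"of $\Sigma$; then Buzyakova's theorem and Proposition~\ref{prop_reduction_charming} finish. The only difference is cosmetic: you get local compactness of $F_n$ by first proving $\mathrm{LC}(B)$ is locally compact (via the regularity shrinking) and then using that $F_n$ is closed in it. The paper instead intersects a relatively compact open set $U_x\subseteq B$ directly with $F_n$, using $\overline{U_x\cap F_n}^{\,F_n}\subseteq F_n\cap\overline{U_x}^{\,B}$, which avoids the shrinking step.
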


\begin{proof}
The closed subspace $B$ is Lindel\"of. As a subspace of the Tychonoff space $X$, the space $B$ is Hausdorff. Put
\[
Z_0=\bigcup_{n\in\omega}F_n.
\]
For each $n\in\omega$, the set $F_n$ is closed in $B$, hence Lindel\"of. Fix $x\in F_n$. Since $x\in \mathrm{LC}(B)$, there exists an open set $U_x\subseteq B$ such that
\[
x\in U_x
\quad\text{and}\quad
\overline{U_x}^{\,B}\text{ is compact.}
\]
Then $U_x\cap F_n$ is open in $F_n$. The subspace closure formula gives
\[
\overline{U_x\cap F_n}^{\,F_n}
=
F_n\cap \overline{U_x\cap F_n}^{\,B}
\subseteq
F_n\cap \overline{U_x}^{\,B}.
\]
Since $F_n$ is closed in $B$ and $\overline{U_x}^{\,B}$ is compact, the set $F_n\cap \overline{U_x}^{\,B}$ is compact. The set $\overline{U_x\cap F_n}^{\,F_n}$ is a closed subset of $F_n\cap \overline{U_x}^{\,B}$, so it is compact. Thus $F_n$ is locally compact. By Lemma~\ref{lem_lc_lindelof_sigmacompact}, each $F_n$ is $\sigma$-compact. Hence $Z_0$ is $\sigma$-compact. By \cite[p.~2576, after Proposition~1.2]{KubisOkunevSzeptycki2006}, $Z_0$ is Lindel\"of $\Sigma$. Since $B\setminus Y\subseteq Z_0$, we have $B=Y\cup Z_0$. The spaces $Y$ and $Z_0$ are Lindel\"of $\Sigma$, and $B$ is Hausdorff. By \cite[p.~2576, after Proposition~1.2]{KubisOkunevSzeptycki2006}, $B$ is Lindel\"of $\Sigma$. By \cite[p.~494, Theorem]{Buzyakova2002}, $B$ is a \(D\)-space. Proposition~\ref{prop_reduction_charming} then gives that $X$ is a \(D\)-space.
\end{proof}

\begin{corollary}\label{cor_nlc_covK_small}
Let $X$ be a charming space with a Lindel\"of $\Sigma$ kernel $Y$ and put $B=\overline{Y}^{\,X}$.
Put $L=B\cap \mathrm{LC}(X)$ and $S=B\setminus \mathrm{LC}(X)$.
If $L$ is Lindel\"of and $\covK(S)<\mathfrak d$, then $X$ is a \(D\)-space.
\end{corollary}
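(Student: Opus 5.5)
The aim is to show that $B$ is the union of fewer than $\mathfrak d$ compact subspaces. Then $B$, which is Lindel\"of as a closed subspace of $X$, is Menger by Lemma~\ref{lem_lt_d_compact_Menger}, hence a \(D\)-space by Theorem~\ref{thm_MengerD}, and $X$ is a \(D\)-space by Proposition~\ref{prop_reduction_charming}. This is the same route as Theorem~\ref{thm_closure-covK}. Write $B=L\cup S$ with $L=B\cap\mathrm{LC}(X)$ and $S=B\setminus\mathrm{LC}(X)$. By hypothesis $\covK(S)<\mathfrak d$, so the work is to show $\covK(L)\le\omega$.

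For $L$ I will invoke Lemma~\ref{lem_lc_part_kernel_closure}. Since $L$ is Lindel\"of by hypothesis, that lemma gives that $L$ is $\sigma$-compact, so $L=\bigcup_{n\in\omega}K_n$ with each $K_n$ compact. Compactness of a subspace does not depend on the ambient space, so each $K_n$ is a compact subspace of $B$. Likewise a compact cover $\{C_i:i\in I\}$ of $S$ with $|I|=\covK(S)<\mathfrak d$ consists of compact subspaces of $B$.

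Then $B=\bigcup_n K_n\cup\bigcup_{i\in I}C_i$ is a cover of $B$ by $\omega+|I|$ compact subspaces. Because $\mathfrak d>\omega$, as noted in the preliminaries, and $|I|<\mathfrak d$, the cardinal sum $\omega+|I|=\max(\omega,|I|)$ is still below $\mathfrak d$. If $I$ is finite, the family is countable, which is again below $\mathfrak d$. Hence $\covK(B)<\mathfrak d$, and Theorem~\ref{thm_closure-covK} finishes the proof.

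There is no real obstacle here. The one point to check is that Lemma~\ref{lem_lc_part_kernel_closure} applies verbatim: it uses $\mathrm{LC}(X)$ rather than $\mathrm{LC}(B)$, which matches the definition of $L$ in the statement. The cardinal arithmetic relies on the already established inequality $\mathfrak d>\omega$.
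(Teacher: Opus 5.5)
Your proposal is correct and follows the paper's own proof: Lemma~\ref{lem_lc_part_kernel_closure} makes $L$ $\sigma$-compact, so $\covK(B)\le\max\{\omega,\covK(S)\}<\mathfrak d$, and Theorem~\ref{thm_closure-covK} finishes. Your separate treatment of finite $I$ and your unpacking of the Menger route are harmless but not needed, since both are already covered by that cardinal bound and that theorem.
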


\begin{proof}
Since $L$ is Lindel\"of, Lemma~\ref{lem_lc_part_kernel_closure} shows that $L$ is $\sigma$-compact, so $\covK(L)\le \omega$.
Because $B=L\cup S$,
\[
\covK(B)\le \covK(L)+\covK(S)\le \omega+\covK(S)=\max\{\omega,\covK(S)\}<\mathfrak d.
\]
The conclusion follows from Theorem~\ref{thm_closure-covK}.
\end{proof}

Since $S=B\setminus \mathrm{LC}(X)=B\cap \mathrm{NLC}(X)$ is closed in $\mathrm{NLC}(X)$, every compact cover of $\mathrm{NLC}(X)$ restricts to a compact cover of $S$. Hence $\covK(S)<\mathfrak d$ whenever $\covK(\mathrm{NLC}(X))<\mathfrak d$. In particular, $\covK(S)<\mathfrak d$ whenever $B\setminus \mathrm{LC}(X)$ is $\sigma$-compact.

\begin{proposition}\label{prop_local_closed_lc_decomp}
Let $X$ be a charming space with a Lindel\"of $\Sigma$ kernel $Y$ and put $B=\overline{Y}^{\,X}$.
Assume that
\[
B=L\cup S,
\]
where $L$ and $S$ are closed in $B$, the space $L$ is locally compact, and $\covK(S)<\mathfrak d$.
Then $X$ is a \(D\)-space.
\end{proposition}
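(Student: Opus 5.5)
The plan is to reduce to Theorem~\ref{thm_closure-covK} by showing $\covK(B)<\mathfrak d$. Since $B=L\cup S$, we have
\[
\covK(B)\le\covK(L)+\covK(S),
\]
and $\covK(S)<\mathfrak d$ holds by hypothesis. As $\mathfrak d>\omega$, it therefore suffices to show that $L$ is $\sigma$-compact.

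Since $L$ is closed in $B$ and $B$ is closed in the Lindel\"of space $X$, the set $L$ is closed in $X$ and hence Lindel\"of. As a subspace of the Tychonoff space $X$, it is Hausdorff. By hypothesis, $L$ is locally compact. Lemma~\ref{lem_lc_lindelof_sigmacompact} then shows that $L$ is $\sigma$-compact, so $\covK(L)\le\omega$. Consequently
\[
\covK(B)\le\max\{\omega,\covK(S)\}<\mathfrak d,
\]
and Theorem~\ref{thm_closure-covK} shows that $X$ is a \(D\)-space.

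There is no real obstacle in this argument. The only point to check is that ``locally compact'' refers to $L$ as a space in its own right, which is exactly what Lemma~\ref{lem_lc_lindelof_sigmacompact} requires. Unlike Lemma~\ref{lem_lc_part_kernel_closure}, we do not need to pass through $\mathrm{LC}(X)$ here, and the closedness of $S$ is not even used.
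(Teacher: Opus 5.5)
Your proof is correct and follows the paper's argument essentially verbatim: $L$ is closed in the Lindel\"of space $B$, so it is Lindel\"of, Hausdorff and locally compact, hence $\sigma$-compact by Lemma~\ref{lem_lc_lindelof_sigmacompact}; this gives $\covK(B)\le\max\{\omega,\covK(S)\}<\mathfrak d$, and Theorem~\ref{thm_closure-covK} finishes. Your remark that the closedness of $S$ is never used holds for the paper's proof as well.
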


\begin{proof}
The closed subspace $B$ of $X$ is Lindel\"of.
The closed subspace $L$ of $B$ is therefore Lindel\"of.
Moreover $L$ is Hausdorff as a subspace of the Tychonoff space $X$.
By assumption, the space $L$ is locally compact.
Thus Lemma~\ref{lem_lc_lindelof_sigmacompact} shows that $L$ is $\sigma$-compact.
Therefore $\covK(L)\le \omega<\mathfrak d$.
Since $B=L\cup S$, we have
\[
\covK(B)\le \covK(L)+\covK(S)\le \omega+\covK(S)=\max\{\omega,\covK(S)\}<\mathfrak d.
\]
The conclusion now follows from Theorem~\ref{thm_closure-covK}.
\end{proof}

\smallskip

\begin{proposition}\label{prop_Gdelta_kernel}
Let $X$ be a charming space with a Lindel\"of $\Sigma$ kernel $Y$ and put $B=\overline{Y}^{\,X}$.
If $Y$ is a $G_\delta$ subset of $B$, then $X$ is a \(D\)-space.
\end{proposition}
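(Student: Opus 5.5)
The plan is to show that $B$ itself is Lindel\"of $\Sigma$ and then to conclude exactly as in the proof of Proposition~\ref{prop_LC_fsigma}. Since $Y$ is a $G_\delta$ subset of $B$, its complement $B\setminus Y$ is an $F_\sigma$ subset of $B$. So I will fix closed sets $F_n\subseteq B$, for $n\in\omega$, with
\[
B\setminus Y=\bigcup_{n\in\omega}F_n .
\]
Each $F_n$ is disjoint from $Y$.

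The key step is to feed each $F_n$ into the kernel condition of Definition~\ref{def_charming}. Since $F_n$ is closed in $B$ and $B$ is closed in $X$, the set $F_n$ is closed in $X$. Hence $U_n=X\setminus F_n$ is open in $X$, and $Y\subseteq U_n$ because $F_n\cap Y=\varnothing$. By the definition of a Lindel\"of $\Sigma$ kernel, $X\setminus U_n=F_n$ is Lindel\"of $\Sigma$. Thus $B\setminus Y$ is a countable union of Lindel\"of $\Sigma$ subspaces.

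Next I will use
\[
B=Y\cup\bigcup_{n\in\omega}F_n .
\]
This is a countable union of Lindel\"of $\Sigma$ subspaces of the Hausdorff space $B$. By \cite[p.~2576, after Proposition~1.2]{KubisOkunevSzeptycki2006}, $B$ is therefore Lindel\"of $\Sigma$. By \cite[p.~494, Theorem]{Buzyakova2002}, $B$ is then a \(D\)-space, and Proposition~\ref{prop_reduction_charming} gives that $X$ is a \(D\)-space.

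I do not expect a genuine obstacle. The only point to check with care is that each $F_n$ is closed in all of $X$, and not merely in $B$, so that its complement is an open neighborhood of $Y$ in $X$ to which the charming condition applies. This holds because a closed subset of a closed subspace is closed. No compact-covering or Menger arguments are needed.
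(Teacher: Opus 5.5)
Your proof is correct and is essentially the paper's argument: write $B\setminus Y$ as a countable union of closed pieces, show each piece is Lindel\"of $\Sigma$ via the kernel condition, conclude that $B$ is Lindel\"of $\Sigma$ and hence a \(D\)-space, and finish with Proposition~\ref{prop_reduction_charming}. The only difference is minor and works in your favour. The paper extends each open $V_n\subseteq B$ to an open $U_n\subseteq X$ and then uses that $B\setminus V_n$ is closed in the Lindel\"of $\Sigma$ space $X\setminus U_n$, whereas you apply the kernel condition directly to the open set $X\setminus F_n$, so you do not need heredity to closed subspaces at that step.
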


\begin{proof}
Choose open sets $V_n$ in $B$ for $n\in\omega$ such that $Y=\bigcap_{n\in\omega}V_n$.
For each $n\in\omega$ pick an open set $U_n$ in $X$ with $V_n=U_n\cap B$.
Since $Y\subseteq V_n\subseteq U_n$ for each $n\in\omega$ and $X$ is charming, the space $X\setminus U_n$ is Lindel\"of $\Sigma$.
The set $B\setminus V_n=B\cap (X\setminus U_n)$ is closed in $X\setminus U_n$. By \cite[p.~2576, after Proposition~1.2]{KubisOkunevSzeptycki2006}, $B\setminus V_n$ is Lindel\"of $\Sigma$.
Now
\[
B=Y\cup \bigcup_{n\in\omega}(B\setminus V_n).
\]
The subspace $Y$ is Lindel\"of $\Sigma$, and each $B\setminus V_n$ is Lindel\"of $\Sigma$. By \cite[p.~2576, after Proposition~1.2]{KubisOkunevSzeptycki2006}, since $B$ is Hausdorff as a subspace of the Tychonoff space $X$, $B$ is Lindel\"of $\Sigma$.
By \cite[p.~494, Theorem]{Buzyakova2002}, $B$ is a \(D\)-space.
Apply Proposition~\ref{prop_reduction_charming}.
\end{proof}

\begin{proposition}\label{prop_Gdelta_kernel_ambient}
Let $X$ be a charming space with a Lindel\"of $\Sigma$ kernel $Y$.
If $Y$ is a $G_\delta$ subset of $X$, then $X$ is Lindel\"of $\Sigma$.
Consequently $X$ is a \(D\)-space.
\end{proposition}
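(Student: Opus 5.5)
We mirror the proof of Proposition~\ref{prop_Gdelta_kernel}, working in $X$ itself rather than in $B$. Since $Y$ is $G_\delta$ in $X$, we fix open sets $U_n\subseteq X$, $n\in\omega$, with
\[
Y=\bigcap_{n\in\omega}U_n .
\]
Each $U_n$ is an open set containing $Y$. By Definition~\ref{def_charming}, each closed subspace $X\setminus U_n$ is therefore Lindel\"of $\Sigma$.

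Next we decompose the whole space. We have
\[
X=Y\cup\bigcup_{n\in\omega}(X\setminus U_n),
\]
because a point outside $Y$ misses some $U_n$. This writes $X$ as a countable union of Lindel\"of $\Sigma$ subspaces: $Y$ is the kernel, and the remaining pieces were just shown to be Lindel\"of $\Sigma$. The space $X$ is Tychonoff, hence Hausdorff. So the closure of the class of Lindel\"of $\Sigma$ spaces under countable unions in Hausdorff spaces \cite[p.~2576, after Proposition~1.2]{KubisOkunevSzeptycki2006} gives that $X$ is Lindel\"of $\Sigma$.

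Finally, \cite[p.~494, Theorem]{Buzyakova2002} shows that every Lindel\"of $\Sigma$ space is a \(D\)-space, so $X$ is a \(D\)-space.

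There is no real obstacle; the argument is routine. The only point needing care is that the countable-union result is applied to subspaces of the single Hausdorff space $X$. This is exactly how it was used in the proof of Proposition~\ref{prop_Gdelta_kernel}.
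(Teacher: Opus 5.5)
Your proposal is correct and matches the paper's proof: the same decomposition $X=Y\cup\bigcup_{n\in\omega}(X\setminus U_n)$ with each $X\setminus U_n$ Lindel\"of $\Sigma$ by the charming property, then closure of Lindel\"of $\Sigma$ spaces under countable unions in the Hausdorff space $X$, and finally Buzyakova's theorem to conclude that $X$ is a \(D\)-space.
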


\begin{proof}
Choose open sets $U_n$ in $X$ for $n\in\omega$ such that $Y=\bigcap_{n\in\omega}U_n$.
Then
\[
X=Y\cup \bigcup_{n\in\omega}\bigl(X\setminus U_n\bigr).
\]
By Definition~\ref{def_charming}, the kernel $Y$ is Lindel\"of $\Sigma$ and each $X\setminus U_n$ is Lindel\"of $\Sigma$. By \cite[p.~2576, after Proposition~1.2]{KubisOkunevSzeptycki2006}, since $X$ is Hausdorff, the displayed decomposition shows that $X$ is Lindel\"of $\Sigma$.
By \cite[p.~494, Theorem]{Buzyakova2002}, $X$ is a \(D\)-space.
\end{proof}

\medskip
The preceding arguments also give several direct criteria.

\begin{proposition}\label{prop_kernel_regular}
Let $X$ be a charming space with a Lindel\"of $\Sigma$ kernel $Y$ and put $B=\overline{Y}^{\,X}$.
Assume that at least one of the following holds.
\begin{enumerate}[label=\arabic*.]
\item $Y$ is a $G_\delta$ subset of $B$.
Equivalently, there exists a sequence of open sets $(U_n)_{n\in\omega}$ in $B$ such that $Y=\bigcap_{n\in\omega}U_n$.
\item $B\setminus Y$ is an $F_\sigma$ subset of $B$.
In particular, this holds when $B\setminus Y$ is $\sigma$-compact in $B$.
\item $B\setminus Y$ is Lindel\"of and $B\setminus Y\subseteq \mathrm{LC}(B)$.
\item The set $B\setminus Y$ is contained in a locally compact $F_\sigma$ subspace of $B$.
\end{enumerate}
Then $X$ is a \(D\)-space.
\end{proposition}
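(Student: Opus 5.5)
Each of the four conditions will be reduced to one of the results already proved; the only real work is checking the equivalences and the inclusions.

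Condition~1 is exactly the hypothesis of Proposition~\ref{prop_Gdelta_kernel}, so that case follows at once. The stated equivalence is just the definition of a $G_\delta$ subset of the subspace $B$.

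For condition~2, we pass to complements in $B$. If $B\setminus Y=\bigcup_{n\in\omega}F_n$ with each $F_n$ closed in $B$, then $Y=\bigcap_{n\in\omega}(B\setminus F_n)$ is a $G_\delta$ subset of $B$, and condition~1 applies. For the ``in particular'' clause: if $B\setminus Y=\bigcup_n K_n$ with each $K_n$ compact, then each $K_n$ is compact in the Hausdorff space $B$. By \cite[Theorem~3.1.8]{Engelking} each $K_n$ is closed in $B$, so $B\setminus Y$ is $F_\sigma$ in $B$.

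For condition~3, we aim to show that $B\setminus Y$ is $\sigma$-compact, so that condition~2 applies.
\begin{enumerate}[label=(\alph*)]
\item For each $x\in B\setminus Y$, choose an open set $U_x\subseteq B$ containing $x$ with $\overline{U_x}^{\,B}$ compact.
\item The family $\{U_x\cap(B\setminus Y)\}$ is an open cover of the Lindel\"of space $B\setminus Y$, so it has a countable subcover indexed by points $x_n$.
\item Then $B\setminus Y\subseteq\bigcup_n\overline{U_{x_n}}^{\,B}$. Each $\overline{U_{x_n}}^{\,B}$ is compact, hence closed in $B$, and we set $F_n=\overline{U_{x_n}}^{\,B}$.
\end{enumerate}
This gives $B\setminus Y\subseteq\bigcup_n F_n\subseteq\mathrm{LC}(B)$. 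The second inclusion holds because each point of $U_{x_n}$ lies in $\mathrm{LC}(B)$. But a point of $\overline{U_{x_n}}^{\,B}\setminus U_{x_n}$ need not lie in $\mathrm{LC}(B)$, so the inclusion $F_n\subseteq\mathrm{LC}(B)$ is the delicate step. I would avoid it by not invoking Proposition~\ref{prop_LC_fsigma} and instead using the $\sigma$-compact set $Z_0=\bigcup_n F_n$ directly. Since $Y\subseteq B$, we have
\[
B=Y\cup Z_0,
\]
and $Z_0$ is Lindel\"of $\Sigma$ by \cite{KubisOkunevSzeptycki2006}. Hence $B$ is Lindel\"of $\Sigma$, therefore a \(D\)-space by \cite{Buzyakova2002}, and Proposition~\ref{prop_reduction_charming} finishes the case. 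This is exactly the final part of the proof of Proposition~\ref{prop_LC_fsigma}.

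For condition~4, let $Z\subseteq B$ be a locally compact $F_\sigma$ subspace with $B\setminus Y\subseteq Z$, and write $Z=\bigcup_n C_n$ with each $C_n$ closed in $B$.
\begin{enumerate}[label=(\alph*)]
\item Each $C_n$ is closed in $B$, hence Lindel\"of.
\item Each $C_n$ is also closed in $Z$, hence locally compact, since closed subspaces of locally compact Hausdorff spaces are locally compact.
\item By Lemma~\ref{lem_lc_lindelof_sigmacompact}, each $C_n$ is $\sigma$-compact, so $Z$ is $\sigma$-compact.
\end{enumerate}
Then $B=Y\cup Z$ is Lindel\"of $\Sigma$, and we conclude as in condition~3.

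The main obstacle is condition~3. We must make sure the argument does not rely on the compact closures lying inside $\mathrm{LC}(B)$; using only $\sigma$-compactness and the union argument bypasses this.
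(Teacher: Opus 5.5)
Your argument is correct. Items 1, 2 and 4 match the paper. In item 4 you apply Lemma~\ref{lem_lc_lindelof_sigmacompact} to each $C_n$, while the paper notes that $A=\bigcup_nC_n$ is itself Lindel\"of and applies the lemma once; this difference is immaterial.

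The real difference is item 3. The paper goes through Proposition~\ref{prop_LC_fsigma}. After choosing $V_x$ with compact closure, it uses regularity of $B$ and openness of $\mathrm{LC}(B)$ to shrink to $W_x$ with $\overline{W_x}^{\,B}\subseteq V_x\cap\mathrm{LC}(B)$. This is exactly the repair for the issue you spotted: the closure of a relatively compact open set can contain points outside $\mathrm{LC}(B)$. After shrinking, $\bigcup_n\overline{W_{x_n}}^{\,B}\subseteq\mathrm{LC}(B)$ and the proposition applies.

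You observe instead that the sets $F_n$ are already compact. So $Z_0=\bigcup_nF_n$ is $\sigma$-compact, and $B=Y\cup Z_0$ is Lindel\"of $\Sigma$ at once. This skips the shrinking step. It also skips the part of the proof of Proposition~\ref{prop_LC_fsigma} that re-derives $\sigma$-compactness of each $F_n$ from local compactness. Your version is therefore shorter and self-contained. The paper's version gains only the uniformity of routing every case through a previously stated result.

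Two expository points should be fixed.
\begin{enumerate}
\item You first assert $\bigcup_nF_n\subseteq\mathrm{LC}(B)$ and then retract it. Simply delete the assertion, since you never use it.
\item Your stated aim, to prove that $B\setminus Y$ is $\sigma$-compact so that condition~2 applies, cannot be achieved in general. Take $X=B=[0,1]$ and $Y=\mathbb{Q}\cap[0,1]$. Then condition~3 holds, but by the Baire category theorem $B\setminus Y$ is neither $\sigma$-compact nor $F_\sigma$ in $B$. What you actually prove, and all you need, is that $B\setminus Y$ is contained in a $\sigma$-compact subset of $B$.
\end{enumerate}
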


\begin{proof}
If item 1 holds, the conclusion follows from Proposition~\ref{prop_Gdelta_kernel}.
If item 2 holds, then $Y$ is a $G_\delta$ subset of $B$, so the conclusion follows from Proposition~\ref{prop_Gdelta_kernel}.
Assume item 3 holds. If $B\setminus Y=\varnothing$, then item 2 applies. Suppose that $B\setminus Y\neq\varnothing$.
For each $x\in B\setminus Y$, choose an open neighborhood $V_x$ of $x$ in $B$ such that $\overline{V_x}^{\,B}$ is compact.
Since $\mathrm{LC}(B)$ is open in the regular space $B$, choose an open neighborhood $W_x$ of $x$ in $B$ such that
\[
\overline{W_x}^{\,B}\subseteq V_x\cap\mathrm{LC}(B).
\]
Then $\overline{W_x}^{\,B}$ is a closed subset of the compact set $\overline{V_x}^{\,B}$, and is therefore compact.
The family
\[
\{W_x\cap(B\setminus Y):x\in B\setminus Y\}
\]
is an open cover of the Lindel\"of space $B\setminus Y$.
Choose points $x_n\in B\setminus Y$ for $n\in\omega$ such that
\[
B\setminus Y\subseteq\bigcup_{n\in\omega}W_{x_n}.
\]
For each $n\in\omega$, put $F_n=\overline{W_{x_n}}^{\,B}$.
Then every $F_n$ is compact and closed in $B$, and
\[
B\setminus Y\subseteq\bigcup_{n\in\omega}F_n\subseteq\mathrm{LC}(B).
\]
Proposition~\ref{prop_LC_fsigma} now gives the conclusion.

Assume item 4 holds, and choose a locally compact $F_\sigma$ subspace $A$ of $B$ such that $B\setminus Y\subseteq A$.
Write $A=\bigcup_{n\in\omega}C_n$, where each $C_n$ is closed in $B$. Since $B$ is Lindel\"of, every $C_n$ is Lindel\"of, and therefore $A$ is Lindel\"of.
As a subspace of the Tychonoff space $B$, the space $A$ is Hausdorff.
Lemma~\ref{lem_lc_lindelof_sigmacompact} shows that $A$ is $\sigma$-compact, and hence $A$ is Lindel\"of $\Sigma$.
Since $B=Y\cup A$, \cite[p.~2576, after Proposition~1.2]{KubisOkunevSzeptycki2006} shows that $B$ is Lindel\"of $\Sigma$. By \cite[p.~494, Theorem]{Buzyakova2002}, the space $B$ is a \(D\)-space. Proposition~\ref{prop_reduction_charming} then shows that $X$ is a \(D\)-space.
\end{proof}

\smallskip
\noindent\textit{Restrictions on counterexamples.}\par

Combining the preceding results gives the following restrictions on any non-\(D\) charming space.

\begin{proposition}\label{prop_countertemplate}
Let $X$ be a charming space that is not a \(D\)-space.
Let $Y$ be a Lindel\"of $\Sigma$ kernel of $X$ and put $B=\overline{Y}^{\,X}$.
Then the following hold.
\begin{enumerate}[label=\textup{(\arabic*)}]
\item $Y$ is not closed in $X$.
\item $\covK(B)\ge \mathfrak d$.
\item $Y$ is not a $G_\delta$ subset of $B$.
\item The set $B\setminus Y$ is not contained in any locally compact $F_\sigma$ subspace of $B$.
\item The set $B\setminus Y$ is not contained in any $F_\sigma$ subset of $B$ that lies in $\mathrm{LC}(B)$.
\end{enumerate}
\end{proposition}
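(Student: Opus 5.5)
Each item will be obtained by contraposition from one of the criteria already established: assuming the negation of an item, an earlier result shows that $X$ is a \(D\)-space, which contradicts the hypothesis. The plan is to go through the five items in order.

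For (1), if $Y$ were closed in $X$, then $X$ would be CL charming. Corollary~\ref{cor_closed_kernel_D} would then make $X$ a \(D\)-space.

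For (2), if $\covK(B)<\mathfrak d$, then Theorem~\ref{thm_closure-covK} gives that $X$ is a \(D\)-space. Hence $\covK(B)\ge\mathfrak d$.

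For (3), if $Y$ were a $G_\delta$ subset of $B$, then Proposition~\ref{prop_Gdelta_kernel} (equivalently, item~1 of Proposition~\ref{prop_kernel_regular}) applies, and again $X$ is a \(D\)-space.

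For (4), suppose $B\setminus Y$ is contained in a locally compact $F_\sigma$ subspace of $B$. Then item~4 of Proposition~\ref{prop_kernel_regular} yields that $X$ is a \(D\)-space.

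For (5), suppose $B\setminus Y\subseteq\bigcup_{n\in\omega}F_n\subseteq\mathrm{LC}(B)$ with each $F_n$ closed in $B$. This is exactly the hypothesis of Proposition~\ref{prop_LC_fsigma}, so $X$ is a \(D\)-space. The only point to check here is that an ``$F_\sigma$ subset of $B$'' means a countable union of sets closed in $B$, which matches the form of that hypothesis.

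There is no genuine obstacle. The main care needed is to match each negated item to the precise hypotheses of the cited result, in particular to note that closedness of $Y$ in $X$ is the CL charming condition of Definition~\ref{def_cl_co}.
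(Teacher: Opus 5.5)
Your proposal is correct and is essentially the paper's proof. The paper also obtains each item by contraposition, from Corollary~\ref{cor_closed_kernel_D}, Theorem~\ref{thm_closure-covK}, Proposition~\ref{prop_Gdelta_kernel}, item~4 of Proposition~\ref{prop_kernel_regular}, and Proposition~\ref{prop_LC_fsigma}, in exactly the order and pairing you give.
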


\begin{proof}
Item (1) follows by contraposition from Corollary~\ref{cor_closed_kernel_D}.
Item (2) follows by contraposition from Theorem~\ref{thm_closure-covK}.
If item (3) did not hold, then Proposition~\ref{prop_Gdelta_kernel} would imply that $X$ is a \(D\)-space.
If item (4) did not hold, then item 4 of Proposition~\ref{prop_kernel_regular} would imply that $X$ is a \(D\)-space.
If item (5) did not hold, then Proposition~\ref{prop_LC_fsigma} would imply that $X$ is a \(D\)-space.
\end{proof}

Thus, in ZFC, every counterexample must fall outside all of the preceding cases. In particular, if $B=\overline{Y}^{\,X}$, then $\covK(B)\ge\mathfrak d$, $Y$ is not a $G_\delta$ subset of $B$, and $B\setminus Y$ is neither contained in a locally compact $F_\sigma$ subspace of $B$ nor in an $F_\sigma$ subset of $B$ that lies in $\mathrm{LC}(B)$.

Together with the fact that $B$ is Lindel\"of, the inequality $\covK(B)\ge\mathfrak d$ has two further consequences.
First, $B$ cannot be locally compact. Otherwise, being Lindel\"of, locally compact and Hausdorff, it would be $\sigma$-compact by Lemma~\ref{lem_lc_lindelof_sigmacompact}, contradicting $\covK(B)\ge \mathfrak d$. Second, $B$ is not contained in $\mathrm{LC}(X)$. Otherwise $B$ would be a closed subspace of the locally compact Hausdorff space $\mathrm{LC}(X)$ and hence itself locally compact, which gives the same contradiction.

Hence, after passing to the closure of a kernel, the search for a counterexample may be restricted to the dense-kernel case. Any such counterexample must evade both the compact-covering bounds below $\mathfrak d$ and the criteria based on the locally compact locus.

\subsection{Dense-kernel reformulation}

\begin{lemma}\label{lem_closure_charming}
Let $X$ be a charming space with a Lindel\"of $\Sigma$ kernel $Y$ and put $B=\overline{Y}^{\,X}$.
Then $B$ is charming and $Y$ is a dense Lindel\"of $\Sigma$ kernel of $B$.
\end{lemma}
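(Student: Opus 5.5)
The plan is to verify the definition of charming directly for $B$ with the candidate kernel $Y$. First I note the routine facts. $B$ is a closed subspace of the Tychonoff space $X$, so it is Tychonoff. $Y$ is Lindel\"of $\Sigma$ as a subspace, and this property is intrinsic, so it does not depend on whether we view $Y$ inside $X$ or inside $B$. Since $Y\subseteq B$ and $B=\overline{Y}^{\,X}$, the closure of $Y$ in $B$ is $B\cap\overline{Y}^{\,X}=B$, so $Y$ is dense in $B$.

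The main step is the neighborhood condition. Let $V$ be an open neighborhood of $Y$ in $B$. I write $V=U\cap B$ for some open set $U\subseteq X$; then $Y\subseteq V\subseteq U$, so $U$ is an open neighborhood of $Y$ in $X$. By Definition~\ref{def_charming}, the subspace $X\setminus U$ is Lindel\"of $\Sigma$. Next I compute
\[
B\setminus V=B\setminus(U\cap B)=B\cap(X\setminus U).
\]
This set is closed in $X\setminus U$, because $B$ is closed in $X$. By the closed-subspace part of \cite[p.~2576, after Proposition~1.2]{KubisOkunevSzeptycki2006}, as already used in the proof of Proposition~\ref{prop_Gdelta_kernel}, the set $B\setminus V$ is Lindel\"of $\Sigma$. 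This is exactly the condition in Definition~\ref{def_charming} for $B$ with kernel $Y$.

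Putting these together, $B$ is charming and $Y$ is a dense Lindel\"of $\Sigma$ kernel of $B$. I do not expect a genuine obstacle. The only point needing care is the translation of an open neighborhood of $Y$ in $B$ into one in $X$, together with the subspace identity above. This is what makes the hereditary property of Lindel\"of $\Sigma$ spaces under closed subspaces applicable.
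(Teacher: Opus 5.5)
Your proposal is correct and follows essentially the same argument as the paper: you write $V=U\cap B$ with $U$ open in $X$, use the charming property of $X$ to get that $X\setminus U$ is Lindel\"of $\Sigma$, and pass to the closed subset $B\setminus V=B\cap(X\setminus U)$. Density of $Y$ in $B$ is immediate in both versions.
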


\begin{proof}
The subspace $Y$ is Lindel\"of $\Sigma$ in $B$.
Let $V$ be an open neighborhood of $Y$ in $B$.
Choose an open set $U$ in $X$ with $V=U\cap B$.
Then $B\setminus V=B\cap (X\setminus U)$.
Since $Y\subseteq V\subseteq U$ and $X$ is charming, the space $X\setminus U$ is Lindel\"of $\Sigma$.
The set $B\setminus V$ is closed in $X\setminus U$. By \cite[p.~2576, after Proposition~1.2]{KubisOkunevSzeptycki2006}, $B\setminus V$ is Lindel\"of $\Sigma$.
Thus $Y$ is a Lindel\"of $\Sigma$ kernel of $B$.
The density of $Y$ in $B$ holds by definition of $B$.
\end{proof}

Thus the general problem reduces to the dense-kernel case.

\begin{theorem}\label{thm_equiv_dense_extension}
The following are equivalent.
\begin{enumerate}[label=\textup{(\roman*)}]
\item Every charming space is a \(D\)-space.
\item Every charming space with a dense Lindel\"of $\Sigma$ kernel is a \(D\)-space.
\end{enumerate}
\end{theorem}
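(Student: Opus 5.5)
The equivalence splits into two implications, and only one of them needs an argument.

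For (i)$\Rightarrow$(ii) there is nothing to do. A charming space with a dense Lindel\"of $\Sigma$ kernel is in particular a charming space, so (i) applies to it directly.

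For (ii)$\Rightarrow$(i), assume (ii) and let $X$ be an arbitrary charming space. Fix a Lindel\"of $\Sigma$ kernel $Y$ of $X$ and put $B=\overline{Y}^{\,X}$. By Lemma~\ref{lem_closure_charming}, $B$ is charming and $Y$ is a dense Lindel\"of $\Sigma$ kernel of $B$. The space $B$ is Tychonoff, being a subspace of the Tychonoff space $X$, so it lies in the class to which (ii) applies. Hypothesis (ii) therefore shows that $B$ is a \(D\)-space. Proposition~\ref{prop_reduction_charming} then yields that $X$ is a \(D\)-space, which proves (i).

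There is no genuine obstacle here. The only point to check is that the kernel of $B$ produced by Lemma~\ref{lem_closure_charming} is the same set $Y$, so that the $B$ to which Proposition~\ref{prop_reduction_charming} is applied is exactly $\overline{Y}^{\,X}$. Since the lemma is stated for this very $Y$ and this very $B$, the two results chain without any further work.
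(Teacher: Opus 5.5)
Your proof is correct and follows the paper's argument exactly. The direction (i)$\Rightarrow$(ii) is trivial. For (ii)$\Rightarrow$(i), Lemma~\ref{lem_closure_charming} makes $B=\overline{Y}^{\,X}$ a charming space with dense kernel $Y$, hypothesis (ii) then makes $B$ a \(D\)-space, and Proposition~\ref{prop_reduction_charming} lifts this to $X$.
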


\begin{proof}
(i)$\Rightarrow$(ii). A charming space with a dense Lindel\"of $\Sigma$ kernel is, in particular, a charming space. Hence it is a \(D\)-space by~(i).

(ii)$\Rightarrow$(i). Let $X$ be a charming space with a Lindel\"of $\Sigma$ kernel $Y$, and put
\[
B=\overline{Y}^{\,X}.
\]
By Lemma~\ref{lem_closure_charming}, the space $B$ is charming and $Y$ is a dense Lindel\"of $\Sigma$ kernel of $B$. Hence item (ii) applies to $B$, so $B$ is a \(D\)-space. Proposition~\ref{prop_reduction_charming} then gives that $X$ is a \(D\)-space.
\end{proof}

\subsection{Boundary cases}

\begin{lemma}\label{lem_countable_kernel_trace}
Let $X$ be a space, let $Y\subseteq X$ be Lindel\"of $\Sigma$, and let $\varphi$ be a neighborhood assignment on $X$.
Then there exists a countable set $C\subseteq Y$ such that
\[
Y\subseteq \bigcup_{c\in C}\varphi(c).
\]
\end{lemma}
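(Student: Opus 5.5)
The plan is to use only the Lindel\"of property of $Y$; the $\Sigma$ part is not needed. The open sets $\varphi(y)$, for $y\in Y$, cover $Y$, and the countable subfamily we extract must be indexed by points of $Y$. That is why we restrict the assignment to $Y$ rather than taking an arbitrary subcover.

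First, for each $y\in Y$, put $W_y=\varphi(y)\cap Y$. This set is open in the subspace $Y$ and contains $y$, because $y\in\varphi(y)$. Hence $\{W_y:y\in Y\}$ is an open cover of $Y$.

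Next, recall that every Lindel\"of $\Sigma$ space is Lindel\"of; this is part of the definition, or follows from Lemma~\ref{lem_Lsig_USCO} together with Corollary~\ref{cor_usco_covK}-type arguments. So this cover has a countable subcover $\{W_c:c\in C\}$ with $C\subseteq Y$ countable. Then
\[
Y\subseteq\bigcup_{c\in C}W_c\subseteq\bigcup_{c\in C}\varphi(c).
\]

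There is no real obstacle. The only point to get right is that the subcover is indexed by points of $Y$, which is automatic because the cover was indexed by $Y$ in the first place. If $Y=\varnothing$, we take $C=\varnothing$.
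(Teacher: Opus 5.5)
Your proof is correct and is essentially the paper's own argument. You restrict $\varphi$ to $Y$, use that $Y$ is Lindel\"of, and extract a countable subcover indexed by points of $Y$; the paper also records an alternative proof via the USCO representation, but its main proof is exactly yours.
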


\begin{proof}
Since $Y$ is Lindel\"of, the family
\[
\{\varphi(y)\cap Y:y\in Y\}
\]
has a countable subcover. Hence there is a countable set $C\subseteq Y$ such that
\[
Y\subseteq \bigcup_{c\in C}\bigl(\varphi(c)\cap Y\bigr)
\subseteq \bigcup_{c\in C}\varphi(c).
\]
\end{proof}

\begin{proof}[An alternative proof using the USCO representation]
If $Y=\varnothing$, take $C=\varnothing$. Assume that $Y\neq\varnothing$.
By Lemma~\ref{lem_Lsig_USCO}, choose a second countable \mbox{Tychonoff} space $M$ and a USCO map
$\Phi\colon M\to\mathcal{K}(Y)$ such that
\[
Y=\bigcup_{m\in M}\Phi(m).
\]
For each $m\in M$, compactness of $\Phi(m)$ gives a finite set $F_m\subseteq\Phi(m)$ such that
\[
\Phi(m)\subseteq U_m:=\bigcup_{y\in F_m}\bigl(\varphi(y)\cap Y\bigr).
\]
By upper semicontinuity,
\[
W_m=\{t\in M:\ \Phi(t)\subseteq U_m\}
\]
is an open neighborhood of $m$. Thus $\{W_m:m\in M\}$ is an open cover of $M$.
Since $M$ is second countable, choose $m_n\in M$, $n\in\omega$, such that
\[
M=\bigcup_{n\in\omega}W_{m_n}.
\]
Set
\[
C=\bigcup_{n\in\omega}F_{m_n}.
\]
Then $C$ is countable. For $y\in Y$, choose $t\in M$ with $y\in\Phi(t)$ and $n\in\omega$ with $t\in W_{m_n}$. Hence
\[
y\in\Phi(t)\subseteq U_{m_n}\subseteq\bigcup_{c\in C}\varphi(c).
\]
Therefore $Y\subseteq\bigcup_{c\in C}\varphi(c)$.
\end{proof}

\begin{corollary}\label{cor_countable_trace_charming}
Let $X$ be a charming space with a Lindel\"of $\Sigma$ kernel $Y$, and let $\varphi$ be a neighborhood assignment on $X$.
Then there exists a countable set $C\subseteq Y$ such that, for
\[
U=\bigcup_{c\in C}\varphi(c),
\]
the set $U$ is an open neighborhood of $Y$ and the complement $X\setminus U$ is Lindel\"of $\Sigma$.
\end{corollary}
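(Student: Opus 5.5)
The corollary combines Lemma~\ref{lem_countable_kernel_trace} with Definition~\ref{def_charming}. First, since $Y$ is a Lindel\"of $\Sigma$ subspace of $X$, I would apply Lemma~\ref{lem_countable_kernel_trace} to $Y$ and the given assignment $\varphi$. This produces a countable set $C\subseteq Y$ with
\[
Y\subseteq\bigcup_{c\in C}\varphi(c)=U.
\]

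Next, I would check that $U$ is an open neighborhood of $Y$. Each $\varphi(c)$ is open in $X$ because $\varphi$ is a neighborhood assignment, so $U$ is open as a union of open sets. The inclusion $Y\subseteq U$ is exactly the conclusion of the lemma.

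Finally, since $Y$ is a Lindel\"of $\Sigma$ kernel and $U$ is an open neighborhood of $Y$, Definition~\ref{def_charming} gives directly that $X\setminus U$ is Lindel\"of $\Sigma$.

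I expect no real obstacle. The only point to watch is the degenerate case $Y=\varnothing$. Then $C=\varnothing$ and $U=\varnothing$, which is still an open neighborhood of $Y$, and the definition of a charming space applies verbatim.
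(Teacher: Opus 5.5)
Your proposal is correct and follows the paper's proof exactly: you apply Lemma~\ref{lem_countable_kernel_trace} to get the countable set $C\subseteq Y$ with $Y\subseteq U$, note that $U$ is open, and then invoke Definition~\ref{def_charming} to conclude that $X\setminus U$ is Lindel\"of $\Sigma$. Your remark on the degenerate case $Y=\varnothing$ is harmless and also correct.
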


\begin{proof}
By Lemma~\ref{lem_countable_kernel_trace}, there exists a countable set $C\subseteq Y$ with $Y\subseteq U$.
Thus $U$ is an open neighborhood of $Y$ in $X$.
Since $Y$ is a kernel of the charming space $X$, the complement $X\setminus U$ is Lindel\"of $\Sigma$.
\end{proof}

\begin{lemma}\label{lem_countable_cd_kernel_trace}
Let $X$ be a space, let $Y\subseteq X$ be Lindel\"of $\Sigma$, and let $\varphi$ be a neighborhood assignment on $X$.
Then there exists a countable set $D\subseteq Y$ that is closed discrete in $Y$ and satisfies
\[
Y\subseteq \bigcup_{d\in D}\varphi(d).
\]
\end{lemma}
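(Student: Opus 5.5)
The plan is to use that Lindel\"of $\Sigma$ spaces are \(D\)-spaces (Buzyakova's theorem), applied to $Y$ itself. Since $\varphi$ is a neighborhood assignment on $X$, the restriction $\varphi_Y(y)=\varphi(y)\cap Y$ is a neighborhood assignment on the subspace $Y$. The space $Y$ is Lindel\"of $\Sigma$, hence a \(D\)-space by \cite[p.~494, Theorem]{Buzyakova2002}. So there is a set $D_0\subseteq Y$, closed discrete in $Y$, with
\[
Y=\bigcup_{d\in D_0}\varphi_Y(d)\subseteq\bigcup_{d\in D_0}\varphi(d).
\]

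It remains to see that $D_0$ is countable. A closed discrete subset of a Lindel\"of space is countable: its points have pairwise-separating open sets, and together with the complement of $D_0$ these form an open cover of $Y$. A countable subcover must contain, for each $d\in D_0$, the unique member meeting $D_0$ in $\{d\}$. Since $Y$ is Lindel\"of, $D=D_0$ is therefore countable, closed discrete in $Y$, and covers $Y$ via $\varphi$, as required.

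The only step requiring care is the appeal to the \(D\)-property of $Y$ for the induced assignment. This is where one must note that $\varphi_Y(y)$ is open in $Y$ and contains $y$, which is immediate. An alternative, more hands-on route would refine the countable set $C$ of Lemma~\ref{lem_countable_kernel_trace} through the USCO representation of Lemma~\ref{lem_Lsig_USCO}, inductively choosing points of $Y$ not yet covered. That route makes closed discreteness in $Y$ the main obstacle, so I would use the Buzyakova argument instead. If $Y=\varnothing$, take $D=\varnothing$.
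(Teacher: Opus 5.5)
Your proposal is correct and follows the paper's proof: you restrict $\varphi$ to $Y$, use Buzyakova's theorem that the Lindel\"of $\Sigma$ space $Y$ is a \(D\)-space to obtain a closed discrete $D\subseteq Y$ covering $Y$, and conclude countability because closed discrete subsets of Lindel\"of spaces are countable. The only difference is cosmetic: the paper says ``$D$ is closed in $Y$, hence Lindel\"of, and discrete, hence countable,'' while you give the explicit open-cover argument for that step.
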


\begin{proof}
For each $y\in Y$, put $\psi(y)=\varphi(y)\cap Y$.
Then $\psi$ is a neighborhood assignment on the subspace $Y$.
By \cite[p.~494, Theorem]{Buzyakova2002}, the Lindel\"of $\Sigma$ space $Y$ is a \(D\)-space.
Hence there exists a closed discrete set $D\subseteq Y$ such that
\[
Y\subseteq \bigcup_{d\in D}\psi(d)\subseteq \bigcup_{d\in D}\varphi(d).
\]
Because $D$ is closed in $Y$ and $Y$ is Lindel\"of, the subspace $D$ is Lindel\"of.
Since $D$ is also discrete, it must be countable.
\end{proof}

\begin{proposition}\label{prop_countable_cd_closed_criterion}
Let $X$ be a charming space with a Lindel\"of $\Sigma$ kernel $Y$.
Assume that every countable closed discrete subset of $Y$ is closed in $X$.
Then $X$ is a \(D\)-space.
\end{proposition}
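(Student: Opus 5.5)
Given a neighborhood assignment $\varphi$ on $X$, we first handle the kernel and then pass to the remainder using the charming property. By Lemma~\ref{lem_countable_cd_kernel_trace}, we choose a countable set $D\subseteq Y$ that is closed discrete in $Y$ and satisfies $Y\subseteq\bigcup_{d\in D}\varphi(d)$. By the hypothesis of the proposition, $D$ is closed in $X$. We must also check that $D$ is discrete in $X$, not merely closed. Since $D$ is closed in $X$, every subset of $D$ is closed discrete in $Y$ and countable, hence closed in $X$ by the same hypothesis. A set all of whose subsets are closed in $X$ has no accumulation point in $X$. In particular, for $d\in D$ the set $D\setminus\{d\}$ is closed in $X$, so $X\setminus(D\setminus\{d\})$ is an open neighborhood of $d$ meeting $D$ only in $d$. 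Thus $D$ is closed discrete in $X$.

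Next we put $U=\bigcup_{d\in D}\varphi(d)$. This is an open neighborhood of $Y$ in $X$, so by Definition~\ref{def_charming} the closed set $A=X\setminus U$ is Lindel\"of $\Sigma$. By \cite[p.~494, Theorem]{Buzyakova2002}, $A$ is a \(D\)-space. Applying this to the restricted assignment $\varphi_A(a)=\varphi(a)\cap A$, we obtain a set $D_A\subseteq A$ that is closed discrete in $A$ and satisfies $A\subseteq\bigcup_{a\in D_A}\varphi(a)$. Since $A$ is closed in $X$, the argument in the proof of Lemma~\ref{lem_glue} shows that $D_A$ is closed discrete in $X$. Moreover $D_A\cap D=\varnothing$, because $D\subseteq Y\subseteq U$ while $D_A\subseteq X\setminus U$.

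By Lemma~\ref{lem_disjoint_cd_union}, $D\cup D_A$ is closed discrete in $X$, and
\[
X=U\cup A\subseteq\bigcup_{d\in D\cup D_A}\varphi(d).
\]
Hence $X$ is a \(D\)-space.

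The only delicate point is passing from ``closed in $X$'' to ``closed discrete in $X$'' for $D$. This is resolved by applying the hypothesis to the subsets $D\setminus\{d\}$, which are again countable and closed discrete in $Y$.
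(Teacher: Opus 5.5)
Your proof is correct and follows the paper's argument essentially step for step. You take a countable set $D\subseteq Y$ that is closed discrete in $Y$ and covers $Y$ via Lemma~\ref{lem_countable_cd_kernel_trace}, apply Buzyakova's theorem to the Lindel\"of $\Sigma$ complement $A=X\setminus U$, and use Lemma~\ref{lem_disjoint_cd_union} to combine $D$ and $D_A$.

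The one deviation is how you show $D$ is discrete in $X$. The paper gets this directly by lifting relatively open sets $U_d=O_d\cap Y$, which gives $O_d\cap D=\{d\}$, because discreteness depends only on the subspace topology of $D$. So your second use of the hypothesis on $D\setminus\{d\}$ is valid but unnecessary, and this step is not really delicate.
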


\begin{proof}
Let $\varphi$ be a neighborhood assignment on $X$.
By Lemma~\ref{lem_countable_cd_kernel_trace}, there exists a countable set $D_Y\subseteq Y$ that is closed discrete in $Y$ and satisfies
\[
Y\subseteq \bigcup_{d\in D_Y}\varphi(d).
\]
By hypothesis, $D_Y$ is closed in $X$. Moreover, for each $d\in D_Y$ we may pick an open set $U_d$ in $Y$ with $U_d\cap D_Y=\{d\}$ and write $U_d=O_d\cap Y$ for some open $O_d$ in $X$. Since $D_Y\subseteq Y$, we get $O_d\cap D_Y=\{d\}$. Hence $D_Y$ is discrete, and therefore closed discrete, in $X$.

Put $U=\bigcup_{d\in D_Y}\varphi(d)$ and $A=X\setminus U$. Then $Y\subseteq U$, so $A$ is Lindel\"of $\Sigma$ by Definition~\ref{def_charming}, and hence a \(D\)-space.
Restrict $\varphi$ to $A$ by $\varphi_A(a)=\varphi(a)\cap A$, and choose a closed discrete set $D_A\subseteq A$ with
\[
A\subseteq \bigcup_{a\in D_A}\varphi_A(a)\subseteq \bigcup_{a\in D_A}\varphi(a).
\]
Since $A$ is closed in $X$, the set $D_A$ is closed in $X$. Moreover, the subspace-open-set argument used above for $D_Y$, with $A$ in place of $Y$, shows that $D_A$ is discrete in $X$, and hence $D_A$ is closed discrete in $X$.
Since $D_Y\subseteq U$ and $D_A\subseteq X\setminus U=A$, the sets $D_Y$ and $D_A$ are disjoint.
By Lemma~\ref{lem_disjoint_cd_union}, $D_Y\cup D_A$ is closed discrete in $X$. Since
\[
X\subseteq U\cup A\subseteq \bigcup_{d\in D_Y}\varphi(d)\cup\bigcup_{a\in D_A}\varphi(a),
\]
$X$ is a \(D\)-space.
\end{proof}

\begin{proposition}\label{prop_countably_closed_kernel_criterion}
Let $X$ be a charming space with a Lindel\"of $\Sigma$ kernel $Y$, and put $B=\overline{Y}^{\,X}$.
Assume that
\[
\overline{C}^{\,B}\subseteq Y
\]
for every countable set $C\subseteq Y$.
Then $X$ is a \(D\)-space.
\end{proposition}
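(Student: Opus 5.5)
The plan is to reduce to Proposition~\ref{prop_countable_cd_closed_criterion} by checking that every countable closed discrete subset $D$ of $Y$ is closed in $X$. Fix such a set $D\subseteq Y$. It is countable, so the hypothesis applies to $C=D$ and gives
\[
\overline{D}^{\,B}\subseteq Y.
\]
Since $B$ is closed in $X$ and $D\subseteq B$, we have $\overline{D}^{\,X}=\overline{D}^{\,B}$. Hence $\overline{D}^{\,X}\subseteq Y$.

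Because $D$ is closed in $Y$, we get
\[
\overline{D}^{\,X}=\overline{D}^{\,X}\cap Y=\overline{D}^{\,Y}=D.
\]
So $D$ is closed in $X$. Proposition~\ref{prop_countable_cd_closed_criterion} then yields that $X$ is a \(D\)-space.

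As an alternative route, one can argue directly. Take a neighborhood assignment $\varphi$ and use Lemma~\ref{lem_countable_cd_kernel_trace} to obtain a countable closed discrete $D_Y\subseteq Y$ covering $Y$. The argument above shows that $D_Y$ is closed in $X$. Then finish with the complement $X\setminus\bigcup_{d\in D_Y}\varphi(d)$, which is Lindel\"of $\Sigma$, exactly as in the proof of Proposition~\ref{prop_countable_cd_closed_criterion}. Citing that proposition is shorter, so I would use it.

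I do not expect any real obstacle. The only point needing care is the identity $\overline{D}^{\,X}\cap Y=\overline{D}^{\,Y}$, which is the subspace closure formula. Closedness of $D$ in $Y$ together with containment of its $X$-closure in $Y$ is exactly what makes $D$ closed in $X$. Discreteness in $X$ is already handled inside the proof of Proposition~\ref{prop_countable_cd_closed_criterion}.
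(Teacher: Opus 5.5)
Your proof is correct and is essentially the paper's argument. The paper also applies the hypothesis to a countable closed discrete $D\subseteq Y$ and uses the subspace closure formula to get $\overline{D}^{\,B}=Y\cap\overline{D}^{\,B}=\overline{D}^{\,Y}=D$; you phrase the same computation through $\overline{D}^{\,X}=\overline{D}^{\,B}$, which makes no real difference, and both proofs finish with Proposition~\ref{prop_countable_cd_closed_criterion}.
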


\begin{proof}
Let $D\subseteq Y$ be countable and closed discrete in $Y$.
By assumption,
\[
\overline{D}^{\,B}\subseteq Y.
\]
Since $D$ is closed in the subspace $Y$,
\[
\overline{D}^{\,Y}=D.
\]
Because $Y$ is a subspace of $B$ and $D\subseteq Y$, the subspace closure formula gives
\[
\overline{D}^{\,Y}=Y\cap \overline{D}^{\,B}.
\]
Hence
\[
\overline{D}^{\,B}
=
\overline{D}^{\,B}\cap Y
=
\overline{D}^{\,Y}
=
D.
\]
Thus $D$ is closed in $B$, and therefore closed in $X$.
The conclusion follows from Proposition~\ref{prop_countable_cd_closed_criterion}.
\end{proof}

\begin{proposition}\label{prop_boundary_easycriteria}
Let $X$ be a charming space with a Lindel\"of $\Sigma$ kernel $Y$, and put $B=\overline{Y}^{\,X}$.
Each of the following conditions implies that $X$ is a \(D\)-space.
\begin{enumerate}[label=\textup{(\roman*)}]
\item $B\setminus Y$ is a countable union of Lindel\"of $\Sigma$ subspaces of $B$.
\item $B\setminus Y$ is an $F_\sigma$ subset of $B$.
\end{enumerate}
\end{proposition}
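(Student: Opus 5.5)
For condition (i), write $B\setminus Y=\bigcup_{n\in\omega}Z_n$ with each $Z_n$ a Lindel\"of $\Sigma$ subspace of $B$. Then
\[
B=Y\cup\bigcup_{n\in\omega}Z_n
\]
is a countable union of Lindel\"of $\Sigma$ subspaces. Since $B$ is Hausdorff as a subspace of the Tychonoff space $X$, \cite[p.~2576, after Proposition~1.2]{KubisOkunevSzeptycki2006} shows that $B$ is Lindel\"of $\Sigma$. By \cite[p.~494, Theorem]{Buzyakova2002}, $B$ is a \(D\)-space, and Proposition~\ref{prop_reduction_charming} then gives that $X$ is a \(D\)-space. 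Alternatively, one could argue through Lemma~\ref{lem_closure_charming} and Corollary~\ref{cor_closed_kernel_D} by exhibiting $B$ itself as a closed kernel, but the direct route is shorter.

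For condition (ii), write $B\setminus Y=\bigcup_{n\in\omega}F_n$ with each $F_n$ closed in $B$. The aim is to reduce to (i) by showing that each $F_n$ is Lindel\"of $\Sigma$. Since $F_n\cap Y=\varnothing$, the set $V_n=B\setminus F_n$ is open in $B$ and contains $Y$. Choose an open $U_n\subseteq X$ with $V_n=U_n\cap B$. Then $Y\subseteq U_n$, so $X\setminus U_n$ is Lindel\"of $\Sigma$ by Definition~\ref{def_charming}. Moreover $F_n=B\setminus V_n=B\cap(X\setminus U_n)$ is closed in $X\setminus U_n$, and hence Lindel\"of $\Sigma$ by closed heredity. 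Condition (i) then applies. (Equivalently, (ii) says that $Y$ is $G_\delta$ in $B$, so Proposition~\ref{prop_Gdelta_kernel} applies directly.)

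There is no real obstacle here. The only point needing care is the passage from closed subsets of $B$ avoiding $Y$ to complements of open neighborhoods of $Y$ in $X$, which is exactly the mechanism already used in Proposition~\ref{prop_Gdelta_kernel} and Lemma~\ref{lem_closure_charming}.
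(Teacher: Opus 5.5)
Your proof is correct and essentially the same as the paper's. Part (i) is argued identically. In part (ii), your direct check that each $F_n=B\cap(X\setminus U_n)$ is Lindel\"of $\Sigma$ is exactly the computation inside the proof of Proposition~\ref{prop_Gdelta_kernel}; the paper simply invokes that proposition after noting that $Y=\bigcap_{n\in\omega}(B\setminus F_n)$ is $G_\delta$ in $B$, which is the route you mention parenthetically.
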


\begin{proof}
For item~\textup{(i)}, write
\[
B\setminus Y=\bigcup_{n\in\omega} Z_n,
\]
where each $Z_n$ is Lindel\"of $\Sigma$ in the Hausdorff space $B$.
Then
\[
B=Y\cup\bigcup_{n\in\omega} Z_n
\]
is a countable union of Lindel\"of $\Sigma$ subspaces of $B$.
By \cite[p.~2576, after Proposition~1.2]{KubisOkunevSzeptycki2006}, the space $B$ is Lindel\"of $\Sigma$.
Hence $B$ is a \(D\)-space.
The conclusion follows from Proposition~\ref{prop_reduction_charming}.

For item~\textup{(ii)}, write
\[
B\setminus Y=\bigcup_{n\in\omega} F_n,
\]
where each $F_n$ is closed in $B$.
Then
\[
Y=B\setminus \bigcup_{n\in\omega}F_n=\bigcap_{n\in\omega}(B\setminus F_n),
\]
so $Y$ is a $G_\delta$ subset of $B$.
Now Proposition~\ref{prop_Gdelta_kernel}, applied to the charming space $X$ with kernel $Y$, shows that $X$ is a \(D\)-space.
\end{proof}

\begin{corollary}\label{cor_boundary_easyshapes}
Let $X$ be a charming space with a Lindel\"of $\Sigma$ kernel $Y$, and put $B=\overline{Y}^{\,X}$.
If $B\setminus Y$ is Lindel\"of $\Sigma$, or countable, or $\sigma$-compact, or $\sigma$-closed-discrete in $B$, then $X$ is a \(D\)-space.
\end{corollary}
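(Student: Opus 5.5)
The plan is to derive each of the four cases from Proposition~\ref{prop_boundary_easycriteria}: every listed shape of $B\setminus Y$ will be shown to fall under item~(i) or item~(ii) there. In each case the argument is a one-line translation.

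First case: $B\setminus Y$ is Lindel\"of $\Sigma$. Then $B\setminus Y$ is trivially a countable union of Lindel\"of $\Sigma$ subspaces of $B$, namely the single-term union. Item~(i) of Proposition~\ref{prop_boundary_easycriteria} applies.

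Second and third cases: $B\setminus Y$ is countable or $\sigma$-compact. A countable set is $\sigma$-compact, being the union of its singletons. So it suffices to treat $B\setminus Y=\bigcup_{n\in\omega}K_n$ with each $K_n$ compact. A compact Hausdorff space is Lindel\"of $\Sigma$; this is immediate from Lemma~\ref{lem_Lsig_USCO}, taking $M$ a one-point space and $\Phi$ constant with value $K_n$, and the empty case is trivial. Hence item~(i) applies again. Alternatively, compact subsets of the Hausdorff space $B$ are closed, so $B\setminus Y$ is then an $F_\sigma$ subset of $B$, and item~(ii) applies.

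Fourth case: $B\setminus Y$ is $\sigma$-closed-discrete in $B$. I read this as $B\setminus Y=\bigcup_{n\in\omega}D_n$ with each $D_n$ closed discrete in $B$. Each $D_n$ is closed in $B$, so $B\setminus Y$ is an $F_\sigma$ subset of $B$, and item~(ii) applies. Alternatively, each $D_n$ is closed in the Lindel\"of space $B$, hence countable, which reduces this case to the countable one.

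The only point needing care is this last interpretation. If the intended meaning were merely that each $D_n$ is closed discrete in the subspace $B\setminus Y$, the $F_\sigma$ argument would break. The countability route would still work, however, provided $B\setminus Y$ were known to be Lindel\"of; that is not given, so I would fix the reading ``closed discrete in $B$'', as the phrase ``in $B$'' in the statement suggests. Apart from this, there is no real obstacle, since everything is absorbed by Proposition~\ref{prop_boundary_easycriteria}.
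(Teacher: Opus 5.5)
Your proposal is correct and matches the paper's proof. Each shape of $B\setminus Y$ is routed through Proposition~\ref{prop_boundary_easycriteria}: the Lindel\"of $\Sigma$ case goes via item~(i), and the countable, $\sigma$-compact and $\sigma$-closed-discrete cases go via the $F_\sigma$ criterion of item~(ii), which is your stated alternative, with ``$\sigma$-closed-discrete in $B$'' read exactly as you read it. Your primary route for the $\sigma$-compact case, through item~(i) and the one-point USCO representation of a compact space, is an equally valid minor variant.
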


\begin{proof}
If $B\setminus Y$ is Lindel\"of $\Sigma$, then Proposition~\ref{prop_boundary_easycriteria}\textup{(i)} shows that $X$ is a \(D\)-space.
If $B\setminus Y$ is countable, then it is a countable union of singletons, and each singleton is closed in the $T_1$ space $B$.
Hence $B\setminus Y$ is $F_\sigma$ in $B$, so the conclusion follows from Proposition~\ref{prop_boundary_easycriteria}\textup{(ii)}.
If $B\setminus Y$ is $\sigma$-compact, write
\[
B\setminus Y=\bigcup_{n\in\omega}K_n
\]
with each $K_n$ compact in $B$.
Since $B$ is Hausdorff, every $K_n$ is closed in $B$, so $B\setminus Y$ is $F_\sigma$ in $B$.
Again the conclusion follows from Proposition~\ref{prop_boundary_easycriteria}\textup{(ii)}.
Finally, if $B\setminus Y$ is $\sigma$-closed-discrete in $B$, write
\[
B\setminus Y=\bigcup_{n\in\omega}D_n
\]
with each $D_n$ closed discrete in $B$.
Each $D_n$ is closed in $B$, hence $B\setminus Y$ is $F_\sigma$ in $B$.
So Proposition~\ref{prop_boundary_easycriteria}\textup{(ii)} shows that $X$ is a \(D\)-space as well.
\end{proof}

\begin{corollary}\label{cor_counterexample_boundary_complex}
Let $X$ be a charming space that is not a \(D\)-space.
Let $Y$ be a Lindel\"of $\Sigma$ kernel of $X$ and put $B=\overline{Y}^{\,X}$.
Then $B\setminus Y$ is neither Lindel\"of $\Sigma$ nor an $F_\sigma$ subset of $B$.
In particular, $B\setminus Y$ is uncountable, not $\sigma$-compact, and not $\sigma$-closed-discrete in $B$.
\end{corollary}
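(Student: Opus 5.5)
The statement is Corollary~\ref{cor_counterexample_boundary_complex}, and I will obtain it by contraposition from Proposition~\ref{prop_boundary_easycriteria} and Corollary~\ref{cor_boundary_easyshapes}.

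Fix a charming space $X$ that is not a \(D\)-space, a Lindel\"of $\Sigma$ kernel $Y$, and $B=\overline{Y}^{\,X}$. First suppose that $B\setminus Y$ were Lindel\"of $\Sigma$. Then it is trivially a countable union of Lindel\"of $\Sigma$ subspaces of $B$, namely one copy of itself. Proposition~\ref{prop_boundary_easycriteria}\textup{(i)} would then make $X$ a \(D\)-space, which is a contradiction. Next suppose that $B\setminus Y$ were an $F_\sigma$ subset of $B$. Then Proposition~\ref{prop_boundary_easycriteria}\textup{(ii)} gives the same contradiction. This proves the first sentence of the statement.

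For the ``in particular'' clause, I invoke the last three alternatives of Corollary~\ref{cor_boundary_easyshapes} directly. Each of them says that if $B\setminus Y$ is countable, $\sigma$-compact, or $\sigma$-closed-discrete in $B$, then $X$ is a \(D\)-space. Each case is therefore excluded by the assumption that $X$ is not a \(D\)-space. Equivalently, each of these properties forces $B\setminus Y$ to be $F_\sigma$ in $B$:
\begin{itemize}
\item singletons are closed in the $T_1$ space $B$;
\item compact sets are closed in the Hausdorff space $B$;
\item closed discrete sets are closed by definition.
\end{itemize}
So each case also contradicts the first part directly.

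No real obstacle arises. The only point to state carefully is that ``$\sigma$-compact'' and ``countable'' refer to subsets of $B$, so that the closedness facts from the Preliminaries apply in $B$ rather than in $B\setminus Y$.
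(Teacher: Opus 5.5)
Your proposal is correct and follows essentially the same route as the paper's proof. You argue by contraposition from Proposition~\ref{prop_boundary_easycriteria}\textup{(i)} and~\textup{(ii)}. You then dispatch the countable, $\sigma$-compact and $\sigma$-closed-discrete cases by observing that each forces $B\setminus Y$ to be $F_\sigma$ in $B$.
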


\begin{proof}
If $B\setminus Y$ were Lindel\"of $\Sigma$, then Proposition~\ref{prop_boundary_easycriteria}\textup{(i)} would imply that $X$ is a \(D\)-space.
If $B\setminus Y$ were $F_\sigma$ in $B$, then Proposition~\ref{prop_boundary_easycriteria}\textup{(ii)} would again imply that $X$ is a \(D\)-space.
If $B\setminus Y$ were countable, or $\sigma$-compact, or $\sigma$-closed-discrete in $B$, then it would be $F_\sigma$ in $B$, so the second clause would apply.
\end{proof}

\begin{proposition}\label{prop_local_countably_compact_boundary}
Let $X$ be a charming space with a Lindel\"of $\Sigma$ kernel $Y$, and put $B=\overline{Y}^{\,X}$.
Assume that every point $x\in B\setminus Y$ has an open neighborhood $U_x$ in $X$ such that
\[
U_x\cap Y
\]
is countably compact.
Then $X$ is a \(D\)-space.
\end{proposition}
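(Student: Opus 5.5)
The plan is to reduce the statement to Proposition~\ref{prop_countable_cd_closed_criterion}. That proposition says it suffices to show that every countable closed discrete subset $D$ of $Y$ is closed in $X$. So fix such a $D$, and suppose towards a contradiction that some point $x\in\overline{D}^{\,X}\setminus D$ exists.

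First I locate $x$. Since $D\subseteq Y$, we have $x\in\overline{Y}^{\,X}=B$. If $x$ were in $Y$, then $x\in Y\cap\overline{D}^{\,X}=\overline{D}^{\,Y}=D$, because $D$ is closed in $Y$. This contradicts $x\notin D$. Hence $x\in B\setminus Y$, and the hypothesis supplies an open neighborhood $U_x$ of $x$ in $X$ with $U_x\cap Y$ countably compact.

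Next I show that $U_x\cap D$ is infinite. Since $X$ is Tychonoff, hence $T_1$, and $x\notin D$, every neighborhood of $x$ meets $D$. If some open $V\ni x$ met $D$ in only a finite set $F$, then $V\setminus F$ would be an open neighborhood of $x$ missing $D$, which is impossible. Applying this to $V=U_x$ shows that $E=U_x\cap D$ is an infinite subset of $U_x\cap Y$.

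Finally, $E$ is closed discrete in the subspace $U_x\cap Y$. It is discrete, being a subset of the discrete space $D$. It is closed, because $E=D\cap(U_x\cap Y)$ and $D$ is closed in $Y$. But a countably compact space contains no infinite closed discrete subset: a countably infinite closed discrete subset would give a countable open cover, by $\{e\}$-isolating sets together with the complement of that subset, which has no finite subcover. This contradicts the countable compactness of $U_x\cap Y$, so $D$ is closed in $X$. Proposition~\ref{prop_countable_cd_closed_criterion} then shows that $X$ is a \(D\)-space.

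I do not expect a serious obstacle. The only points needing care are the $T_1$ argument that gives infinitely many points of $D$ near $x$, and checking that closedness of $D$ in $Y$ passes to the trace on $U_x\cap Y$.
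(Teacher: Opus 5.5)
Your proposal is correct and follows essentially the same route as the paper: reduce to Proposition~\ref{prop_countable_cd_closed_criterion}, observe that any accumulation point of $D$ outside $D$ must lie in $B\setminus Y$, and use countable compactness of $U_x\cap Y$ together with the $T_1$ property to rule it out. The only difference is cosmetic: you argue by contradiction from a point of $\overline{D}^{\,X}\setminus D$. The paper instead checks the three cases $x\in X\setminus B$, $x\in Y\setminus D$ and $x\in B\setminus Y$ directly, showing in the last case that $U_x\cap D$ is finite and then shrinking $U_x$ to avoid it.
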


\begin{proof}
Let $D\subseteq Y$ be a countable set that is closed discrete in $Y$.
We show that $D$ is closed in $X$.
If $x\in X\setminus B$, then the open set $X\setminus B$ contains $x$ and is disjoint from $D$.
If $x\in Y\setminus D$, then $x\notin \overline{D}^{\,Y}$ because $D$ is closed in $Y$.
Hence there exists an open set $U_x'$ in $Y$ such that
\[
x\in U_x'
\quad\text{and}\quad
U_x'\cap D=\varnothing.
\]
Write
\[
U_x'=V_x\cap Y
\]
for some open set $V_x$ in $X$.
Since $D\subseteq Y$, we obtain
\[
V_x\cap D=U_x'\cap D=\varnothing.
\]
Finally, let $x\in B\setminus Y$ and choose $U_x$ as in the assumption.
The set $D\cap U_x$ is countable because $D$ is countable.
For each $d\in D\cap U_x$, since $D$ is discrete in $Y$, there exists an open set $O_d$ in $Y$ such that
\[
d\in O_d
\quad\text{and}\quad
O_d\cap D=\{d\}.
\]
Then
\[
(U_x\cap Y)\cap O_d
\]
is open in $U_x\cap Y$, contains $d$, and satisfies
\[
\bigl((U_x\cap Y)\cap O_d\bigr)\cap (D\cap U_x)=\{d\}.
\]
Thus $D\cap U_x$ is discrete in $U_x\cap Y$.
Also,
\[
D\cap U_x=(U_x\cap Y)\cap D.
\]
Since $D$ is closed in $Y$ and $U_x\cap Y$ is a subspace of $Y$, the set $(U_x\cap Y)\cap D=D\cap U_x$ is closed in $U_x\cap Y$.
Thus $D\cap U_x$ is a countable closed discrete subspace of $U_x\cap Y$.
Hence $D\cap U_x$ is finite, because a countably compact $T_1$ space contains no infinite countable closed discrete subspace.
Because $D\cap U_x$ is finite and $X$ is $T_1$, the set $D\cap U_x$ is closed in $X$. Since $x\notin D\cap U_x$, there exists an open neighborhood $W_x\subseteq U_x$ of $x$ with
\[
W_x\cap (D\cap U_x)=\varnothing.
\]
Because $W_x\subseteq U_x$, this implies
\[
W_x\cap D=\varnothing.
\]
Thus no point of $X\setminus D$ belongs to $\overline{D}^{\,X}$, and therefore $D$ is closed in $X$.
The conclusion follows from Proposition~\ref{prop_countable_cd_closed_criterion}.
\end{proof}

\begin{theorem}\label{thm_middlelayer_cc_small}
Let $X$ be a charming space with a Lindel\"of $\Sigma$ kernel $Y$, and put $B=\overline{Y}^{\,X}$.
Assume that there exists a closed subset $S$ of $B$ such that $S\subseteq B\setminus Y$ and
\[
\covK(S)<\mathfrak d,
\]
and every point $x\in (B\setminus Y)\setminus S$ has an open neighborhood $U_x$ in $X$ such that
\[
U_x\cap Y
\]
is countably compact.
Then $X$ is a \(D\)-space.
\end{theorem}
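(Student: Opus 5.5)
The plan is to apply the gluing Lemma~\ref{lem_glue} with the closed set $S$ in the role of $B$ there. The complements of open neighbourhoods of $S$ will be handled by Proposition~\ref{prop_local_countably_compact_boundary}, applied to a suitable charming subspace.

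\emph{Step 1: $S$ is a $D$-space.} Since $S$ is closed in $B$ and $B$ is closed in $X$, the set $S$ is closed in $X$, hence Lindel\"of. By hypothesis $\covK(S)<\mathfrak d$, so $S$ is Menger by Lemma~\ref{lem_lt_d_compact_Menger}. It is therefore a $D$-space by Theorem~\ref{thm_MengerD}.

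\emph{Step 2: each $A=X\setminus U$, with $U$ open and $S\subseteq U$, is charming.} Put $Y'=Y\cap A=Y\setminus U$. This is closed in $Y$, hence Lindel\"of $\Sigma$ by the closed-subspace permanence from \cite{KubisOkunevSzeptycki2006}. Let $V$ be open in $A$ with $Y'\subseteq V$, and write $V=A\cap W$ with $W$ open in $X$. Then $U\cup W$ is open in $X$ and contains $Y$. Moreover $A\setminus V=X\setminus(U\cup W)$, which is Lindel\"of $\Sigma$ by Definition~\ref{def_charming}. Thus $Y'$ is a Lindel\"of $\Sigma$ kernel of $A$.

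\emph{Step 3: the countable-compactness hypothesis of Proposition~\ref{prop_local_countably_compact_boundary} holds for $A$.} Put $B'=\overline{Y'}^{\,A}$. Since $A$ is closed, $B'\subseteq B\cap A$. Let $x\in B'\setminus Y'$. Then $x\in B$ and $x\notin Y$, since $x\in A$ and $x\notin Y'=Y\cap A$. Also $x\notin S$, because $S\subseteq U$ and $x\in A$. Hence $x\in(B\setminus Y)\setminus S$, and we may take $U_x$ as in the hypothesis. The set $U_x\cap A$ is an open neighbourhood of $x$ in $A$, and
\[
(U_x\cap A)\cap Y'=(U_x\cap Y)\cap A .
\]
This is a closed subset of the countably compact space $U_x\cap Y$, so it is countably compact.

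\emph{Step 4: conclusion.} Proposition~\ref{prop_local_countably_compact_boundary} now shows that $A$ is a $D$-space. Together with Step~1, Lemma~\ref{lem_glue} gives that $X$ is a $D$-space.

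The only step needing care is Step~2, the verification that $A$ is charming with kernel $Y\setminus U$. This is where the closedness of $S$ and the inclusion $S\subseteq U$ are used, to move neighbourhoods between $A$ and $X$. The remaining steps are routine bookkeeping with subspace closures.
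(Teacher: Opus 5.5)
Your proof is correct and follows essentially the same route as the paper: glue along $S$ via Lemma~\ref{lem_glue}, show that each complement is charming with the trace of $Y$ as kernel, and finish with Proposition~\ref{prop_local_countably_compact_boundary}. The paper glues inside $B$ (using Lemma~\ref{lem_closure_charming}) and then lifts to $X$ by Proposition~\ref{prop_reduction_charming}, whereas you glue directly in $X$. That is why you must restrict the countable-compactness check to $\overline{Y'}^{\,A}\subseteq B$, and you do so correctly.

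One correction to your closing remark: Step~2 works for every open $U$ and uses neither the closedness of $S$ nor $S\subseteq U$. The inclusion $S\subseteq U$ is what Step~3 needs, and the closedness of $S$ is used in Step~1 and in applying Lemma~\ref{lem_glue}.
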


\begin{proof}
The set $S$ is closed in the Lindel\"of space $B$, so $S$ is Lindel\"of.
Since $\covK(S)<\mathfrak d$, Lemma~\ref{lem_lt_d_compact_Menger} shows that $S$ is Menger, and Theorem~\ref{thm_MengerD} then implies that $S$ is a \(D\)-space.

We apply Lemma~\ref{lem_glue} to the closed subspace $S$ of $B$.
Let $U\subseteq B$ be open with $S\subseteq U$, and put
\[
F=B\setminus U
\quad\text{and}\quad
Y_F=Y\cap F.
\]
Since $F$ is closed in $B$, the space $F$ is Lindel\"of.
If $Y_F=\varnothing$, then $Y\subseteq U$, so $F=B\setminus U$ is Lindel\"of $\Sigma$ by Definition~\ref{def_charming} applied to the charming space $B$. By \cite[p.~494, Theorem]{Buzyakova2002}, $F$ is a \(D\)-space.
Suppose from now on that $Y_F\neq\varnothing$.
The set $Y_F$ is closed in the Lindel\"of $\Sigma$ space $Y$, hence Lindel\"of $\Sigma$.
We show that $Y_F$ is a Lindel\"of $\Sigma$ kernel of $F$.
Let $V$ be an open neighborhood of $Y_F$ in $F$, and write $V=W\cap F$ for some open set $W$ in $B$.
Since $Y_F\subseteq W$ and $Y\setminus F\subseteq U$, we have $Y\subseteq W\cup U$.
Since $B$ is charming with dense Lindel\"of $\Sigma$ kernel $Y$ by Lemma~\ref{lem_closure_charming}, and $W\cup U$ is an open neighborhood of $Y$ in $B$, the set $B\setminus(W\cup U)$ is Lindel\"of $\Sigma$.
Now
\[
F\setminus V=(B\setminus U)\setminus(W\cap F)=B\setminus(U\cup W),
\]
so $F\setminus V$ is Lindel\"of $\Sigma$.
Hence $Y_F$ is a Lindel\"of $\Sigma$ kernel of $F$. Therefore $F$ is charming by Definition~\ref{def_charming}.

Let $x\in F\setminus Y_F$.
Since $x\in F=B\setminus U$ and $S\subseteq U$, we have $x\notin S$. Since $x\in F$ and $x\notin Y_F=Y\cap F$, we have $x\notin Y$.
Hence $x\in (B\setminus Y)\setminus S$, so by the assumption there exists an open neighborhood $U_x$ of $x$ in $X$ such that $U_x\cap Y$ is countably compact.
The set $V_x=U_x\cap F$ is an open neighborhood of $x$ in $F$, and
\[
V_x\cap Y_F=U_x\cap Y_F=(U_x\cap Y)\cap Y_F.
\]
Since $Y_F=Y\cap F$ is closed in $Y$, the subspace $U_x\cap Y_F=(U_x\cap Y)\cap Y_F$ is closed in the countably compact space $U_x\cap Y$.
Hence $V_x\cap Y_F$ is countably compact.
Thus, for every $x\in F\setminus Y_F$, there exists an open neighborhood $V_x$ of $x$ in $F$ such that $V_x\cap Y_F$ is countably compact. In particular, this holds for every $x\in\overline{Y_F}^{\,F}\setminus Y_F\subseteq F\setminus Y_F$.
Therefore Proposition~\ref{prop_local_countably_compact_boundary}, applied to the charming space $F$ with Lindel\"of $\Sigma$ kernel $Y_F$ and closure $\overline{Y_F}^{\,F}$, shows that $F$ is a \(D\)-space.

Thus Lemma~\ref{lem_glue}, applied to the closed subspace $S$ of $B$, shows that $B$ is a \(D\)-space.
Finally Proposition~\ref{prop_reduction_charming} shows that $X$ is a \(D\)-space.
\end{proof}

\begin{corollary}\label{cor_counterexample_cd_accumulation}
Let $X$ be a charming space that is not a \(D\)-space.
Let $Y$ be a Lindel\"of $\Sigma$ kernel of $X$ and put $B=\overline{Y}^{\,X}$.
Then there exist a nonempty countable set $D\subseteq Y$ and a point $x\in B\setminus Y$ such that $D$ is closed discrete in $Y$ and
\[
x\in \overline{D}^{\,X}.
\]
In particular, there exists a countable subset of $Y$ whose closure in $B$ is not contained in $Y$.
\end{corollary}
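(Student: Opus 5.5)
We argue by contraposition from Proposition~\ref{prop_countable_cd_closed_criterion}. Since $X$ is not a \(D\)-space, that proposition says its hypothesis must fail: some countable set $D\subseteq Y$ that is closed discrete in $Y$ is not closed in $X$. We fix such a $D$. It is nonempty, because the empty set is closed in $X$. Since $D$ is not closed, we can choose a point $x\in\overline{D}^{\,X}\setminus D$.

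The remaining work is to locate $x$ in $B\setminus Y$. First, $D\subseteq Y\subseteq B$ and $B$ is closed in $X$, so
\[
\overline{D}^{\,X}\subseteq B,
\]
and hence $x\in B$. Next, suppose $x\in Y$. The subspace closure formula gives
\[
\overline{D}^{\,Y}=Y\cap\overline{D}^{\,X},
\]
so $x$ would lie in $\overline{D}^{\,Y}=D$, because $D$ is closed in $Y$. This contradicts $x\notin D$. Therefore $x\in B\setminus Y$ and $x\in\overline{D}^{\,X}$, which is the main assertion.

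For the final sentence, note that $B$ is closed in $X$, so $\overline{D}^{\,B}=B\cap\overline{D}^{\,X}=\overline{D}^{\,X}$. This set contains $x\notin Y$, so the closure in $B$ of the countable set $D\subseteq Y$ is not contained in $Y$. Alternatively, this follows by contraposition from Proposition~\ref{prop_countably_closed_kernel_criterion}.

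There is no real obstacle in this argument. The only points that need care are the bookkeeping of closures across $Y\subseteq B\subseteq X$ and the check that $D\neq\varnothing$.
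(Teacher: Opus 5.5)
Your proposal is correct and follows essentially the same route as the paper's proof. Both apply Proposition~\ref{prop_countable_cd_closed_criterion} by contraposition, pick $x\in\overline{D}^{\,X}\setminus D$, use $\overline{D}^{\,X}\subseteq B$ and the subspace closure formula $Y\cap\overline{D}^{\,X}=\overline{D}^{\,Y}=D$ to rule out $x\in Y$, and then use $\overline{D}^{\,B}=\overline{D}^{\,X}$ for the final assertion.
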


\begin{proof}
By the contrapositive of Proposition~\ref{prop_countable_cd_closed_criterion}, there exists a countable closed discrete set $D\subseteq Y$ that is not closed in $X$. In particular, $D\neq\varnothing$.
Choose
\[
x\in \overline{D}^{\,X}\setminus D.
\]
Since $D\subseteq Y\subseteq B$ and $B$ is closed in $X$, we have
\[
\overline{D}^{\,X}\subseteq B.
\]
Because $B$ is a closed subspace of $X$ containing $D$, the subspace closure formula gives
\[
\overline{D}^{\,B}=B\cap \overline{D}^{\,X}=\overline{D}^{\,X}.
\]
If $x$ belonged to $Y\setminus D$, then, because $D\subseteq Y$ and $Y$ is a subspace of $X$, the subspace closure formula would give
\[
x\in Y\cap \overline{D}^{\,X}=\overline{D}^{\,Y}=D,
\]
a contradiction.
Hence $x\notin Y$, so in fact $x\in B\setminus Y$.
The final assertion now follows from
\[
x\in \overline{D}^{\,B}\setminus Y.
\]
\end{proof}

\begin{proposition}\label{prop_counterexample_no_local_cc}
Let $X$ be a charming space that is not a \(D\)-space.
Let $Y$ be a Lindel\"of $\Sigma$ kernel of $X$ and put $B=\overline{Y}^{\,X}$.
Then there exist a nonempty countable set $D\subseteq Y$ and a point $x\in B\setminus Y$ such that $D$ is closed discrete in $Y$, $x\in \overline{D}^{\,X}$, and for every open neighborhood $U$ of $x$ in $X$ the set
\[
U\cap D
\]
is infinite.
Consequently, for every open neighborhood $U$ of $x$ in $X$, the set $U\cap Y$ is not countably compact. Indeed, $U\cap Y$ contains the infinite countable closed discrete subspace $U\cap D$.
\end{proposition}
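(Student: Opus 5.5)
We argue by contraposition from Proposition~\ref{prop_countable_cd_closed_criterion}, as in Corollary~\ref{cor_counterexample_cd_accumulation}. Since $X$ is not a \(D\)-space, that proposition yields a countable set $D\subseteq Y$ which is closed discrete in $Y$ but not closed in $X$; in particular $D\neq\varnothing$. The proof of Corollary~\ref{cor_counterexample_cd_accumulation} then supplies a point $x\in\overline{D}^{\,X}\setminus D$ and shows that $x\in B\setminus Y$. We keep this $D$ and this $x$; it remains to show that every open neighborhood $U$ of $x$ in $X$ meets $D$ in an infinite set.

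We argue by contradiction. Suppose some open $U\ni x$ has $U\cap D$ finite. Since $x\notin Y$ and $D\subseteq Y$, we have $x\notin U\cap D$. Because $X$ is $T_1$, the finite set $U\cap D$ is closed, so $W=U\setminus(U\cap D)$ is an open neighborhood of $x$. But $W\cap D=\varnothing$, which contradicts $x\in\overline{D}^{\,X}$. Hence $U\cap D$ is infinite for every open neighborhood $U$ of $x$.

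For the final assertion, fix an open $U\ni x$. The set $U\cap D=(U\cap Y)\cap D$ is closed in $U\cap Y$, because $D$ is closed in $Y$. It is also discrete: for each $d\in D$ choose an open $O_d$ in $Y$ with $O_d\cap D=\{d\}$ and intersect it with $U\cap Y$. This is the same argument as in the proof of Proposition~\ref{prop_local_countably_compact_boundary}. So $U\cap D$ is an infinite, countable, closed discrete subspace of the $T_1$ space $U\cap Y$. A countably compact $T_1$ space has no infinite closed discrete subset, so $U\cap Y$ is not countably compact.

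There is no real obstacle here. The only point needing care is that $x\notin D$, which is what allows the finite set $U\cap D$ to be removed from $U$; this holds because $x\notin Y$.
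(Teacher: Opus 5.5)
Your proposal is correct and follows essentially the same route as the paper's proof. Both take $D$ from the contrapositive of Proposition~\ref{prop_countable_cd_closed_criterion} and a point $x\in\overline{D}^{\,X}\setminus D$, shown to lie in $B\setminus Y$. Both then remove the finite closed set $U\cap D$ from $U$ to contradict $x\in\overline{D}^{\,X}$, and finish with the same closed-discrete-subspace argument inside $U\cap Y$.
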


\begin{proof}
By the contrapositive of Proposition~\ref{prop_countable_cd_closed_criterion}, there exists a countable closed discrete set $D\subseteq Y$ that is not closed in $X$. In particular, $D\neq\varnothing$.
Choose
\[
x\in \overline{D}^{\,X}\setminus D.
\]
Since $D\subseteq Y\subseteq B$ and $B$ is closed in $X$, we have $x\in B$.
If $x$ belonged to $Y\setminus D$, then, because $D\subseteq Y$ and $Y$ is a subspace of $X$, the subspace closure formula would give
\[
x\in Y\cap \overline{D}^{\,X}=\overline{D}^{\,Y}=D,
\]
a contradiction.
Hence $x\notin Y$, so $x\in B\setminus Y$.
Let $U$ be an open neighborhood of $x$ in $X$.
Since $x\in \overline{D}^{\,X}$ and $x\notin D$, we have $U\cap D\neq\varnothing$.
If $U\cap D$ were finite, then, because $X$ is $T_1$, the set $U\cap D$ would be closed in $X$. Since $x\notin U\cap D$, there would exist an open neighborhood $V\subseteq U$ of $x$ with
\[
V\cap (U\cap D)=\varnothing.
\]
Because $V\subseteq U$, this would imply $V\cap D=\varnothing$, contradicting $x\in \overline{D}^{\,X}$.
Thus $U\cap D$ is infinite.
For each $d\in U\cap D$, since $D$ is discrete in $Y$, there exists an open set $O_d$ in $Y$ such that
\[
d\in O_d
\quad\text{and}\quad
O_d\cap D=\{d\}.
\]
Then
\[
(U\cap Y)\cap O_d
\]
is open in $U\cap Y$, contains $d$, and satisfies
\[
\bigl((U\cap Y)\cap O_d\bigr)\cap (U\cap D)=\{d\}.
\]
Thus $U\cap D$ is discrete in $U\cap Y$.
Also,
\[
U\cap D=(U\cap Y)\cap D.
\]
Since $D$ is closed in $Y$ and $U\cap Y$ is a subspace of $Y$, the set $(U\cap Y)\cap D=U\cap D$ is closed in $U\cap Y$.
Thus $U\cap D$ is a countable closed discrete subspace of $U\cap Y$.
Hence $U\cap Y$ cannot be countably compact.
\end{proof}

\begin{proposition}\label{prop_countable_cd_closure_D}
Let $X$ be a charming space with a Lindel\"of $\Sigma$ kernel $Y$.
If $D\subseteq Y$ is nonempty, countable and closed discrete in $Y$, then
\[
C=\overline{D}^{\,X}
\]
is a closed charming subspace of $X$ with countable open dense Lindel\"of $\Sigma$ kernel $D$.
In particular, $C\setminus D$ is Lindel\"of $\Sigma$ and $C$ is a \(D\)-space.
\end{proposition}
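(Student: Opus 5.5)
Put $C=\overline{D}^{\,X}$; it is closed in $X$, so Tychonoff and Lindel\"of. The plan is to check the claimed structure of $C$ in four steps: $D$ is open in $C$, $D$ is dense in $C$, $D$ is a Lindel\"of $\Sigma$ kernel of $C$, and then the two stated consequences.

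\emph{$D$ is open in $C$.} Since $D\subseteq Y\subseteq B$ and $B$ is closed, $C\subseteq B$. The argument in the proofs of Corollary~\ref{cor_counterexample_cd_accumulation} and Proposition~\ref{prop_counterexample_no_local_cc} gives $C\cap Y=\overline{D}^{\,Y}=D$, so $C\setminus D\subseteq B\setminus Y$. Fix $d\in D$. Discreteness of $D$ in $Y$ gives an open $O_d$ in $X$ with $O_d\cap D=\{d\}$. Then
\[
O_d\cap C\subseteq \overline{O_d\cap D}^{\,X}=\{d\},
\]
where the inclusion holds because $O_d$ is open and $C=\overline{D}^{\,X}$. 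Hence $O_d\cap C=\{d\}$. So every point of $D$ is isolated in $C$, and $D$ is open in $C$.

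\emph{$D$ is dense in $C$ and is a Lindel\"of $\Sigma$ kernel.} Density is immediate from the definition of $C$. A countable discrete space is $\sigma$-compact, hence Lindel\"of $\Sigma$, so $D$ is Lindel\"of $\Sigma$. For the kernel property, let $V$ be open in $C$ with $D\subseteq V$, and write $V=W\cap C$ with $W$ open in $X$. I will pass to $W'=W\cup(X\setminus C)$. This set is open in $X$, and it contains $Y$: indeed $Y\cap C=D\subseteq W$, and the rest of $Y$ lies in $X\setminus C$. By Definition~\ref{def_charming}, $X\setminus W'$ is Lindel\"of $\Sigma$. Moreover
\[
X\setminus W'=C\setminus W=C\setminus V,
\]
so $C\setminus V$ is Lindel\"of $\Sigma$. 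Thus $C$ is charming with kernel $D$. This enlargement $W\mapsto W'$ is the only step that needs care, and it works precisely because $C\cap Y=D$.

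\emph{The two consequences.} Since $D$ is open in $C$, the set $D$ itself is an open neighbourhood of $D$ in $C$. The kernel property then gives that $C\setminus D$ is Lindel\"of $\Sigma$. Now $C=D\cup(C\setminus D)$ is a union of two Lindel\"of $\Sigma$ subspaces of a Hausdorff space, so $C$ is Lindel\"of $\Sigma$ by \cite[p.~2576, after Proposition~1.2]{KubisOkunevSzeptycki2006}. Hence $C$ is a \(D\)-space by \cite[p.~494, Theorem]{Buzyakova2002}.
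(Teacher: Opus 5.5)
Your proof is correct and follows the paper's argument essentially step for step: the same trace identity $C\cap Y=D$, the same enlargement $W\cup(X\setminus C)$ for the kernel property, and the same observation that each point of $D$ is isolated in $C$, which lets you take $D$ itself as the open neighbourhood. The differences are cosmetic. You get $O_d\cap C=\{d\}$ from $O_d\cap\overline{D}^{\,X}\subseteq\overline{O_d\cap D}^{\,X}$ rather than from the paper's Hausdorff separation argument, and you conclude directly that $C=D\cup(C\setminus D)$ is Lindel\"of $\Sigma$ instead of routing through Proposition~\ref{prop_boundary_easycriteria}\textup{(ii)}; this is slightly more direct and makes explicit that $C$ is itself Lindel\"of $\Sigma$.
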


\begin{proof}
Since $D\neq\varnothing$, the space $C$ is nonempty. As a closed subspace of the Lindel\"of space $X$, it is Lindel\"of.
Because $D$ is countable, it is Lindel\"of $\Sigma$.
By definition, $D$ is dense in $C$.

We claim that
\[
C\cap Y=D.
\]
The inclusion $D\subseteq C\cap Y$ is clear.
Conversely, let $y\in Y\setminus D$.
Since $D$ is closed in the subspace $Y$, there exists an open set $U_y$ in $Y$ such that
\[
y\in U_y
\quad\text{and}\quad
U_y\cap D=\varnothing.
\]
Write
\[
U_y=O_y\cap Y
\]
for some open set $O_y$ in $X$.
Because $D\subseteq Y$, we obtain
\[
O_y\cap D=U_y\cap D=\varnothing.
\]
Hence
\[
y\notin \overline{D}^{\,X}=C.
\]
So $C\cap Y=D$.

We next show that $D$ is a Lindel\"of $\Sigma$ kernel of $C$.
Let $U$ be an open neighborhood of $D$ in $C$.
Choose an open set $W$ in $X$ such that
\[
U=C\cap W.
\]
Put
\[
O=W\cup (X\setminus C).
\]
Then $O$ is open in $X$. Because $D\subseteq U=C\cap W$, we have $D\subseteq W$. Also,
\[
Y\cap C=D.
\]
Hence every point of $Y\cap C$ lies in $W$, while every point of $Y\setminus C$ lies in $X\setminus C$. Therefore
\[
Y=(Y\cap C)\cup (Y\setminus C)=D\cup (Y\setminus C)\subseteq O.
\]
Since $Y$ is a Lindel\"of $\Sigma$ kernel of $X$, the complement
\[
X\setminus O
\]
is Lindel\"of $\Sigma$.
On the other hand,
\[
C\setminus U
=
C\cap (X\setminus W)
=
X\setminus O.
\]
Hence $C\setminus U$ is Lindel\"of $\Sigma$.
Therefore $D$ is a Lindel\"of $\Sigma$ kernel of $C$, and $C$ is charming by Definition~\ref{def_charming}.

We next show that $D$ is open in $C$.
Fix $d\in D$.
Because $D$ is discrete in the subspace $Y$, there exists an open set $U_d$ in $Y$ such that
\[
d\in U_d
\quad\text{and}\quad
U_d\cap D=\{d\}.
\]
Write
\[
U_d=O_d\cap Y
\]
for some open set $O_d$ in $X$.
Since $D\subseteq Y$, we obtain
\[
O_d\cap D=U_d\cap D=\{d\}.
\]
We claim that
\[
O_d\cap C=\{d\}.
\]
Indeed, suppose that some point $x\in (O_d\cap C)\setminus\{d\}$ exists.
Since $X$ is Hausdorff, there is an open set $V_x$ in $X$ such that
\[
x\in V_x
\quad\text{and}\quad
d\notin V_x.
\]
Then $V_x\cap O_d$ is an open neighborhood of $x$ in $X$.
Since
\[
x\in C=\overline{D}^{\,X},
\]
we obtain
\[
(V_x\cap O_d)\cap D\neq\varnothing.
\]
But $O_d\cap D=\{d\}$ and $d\notin V_x$, so $(V_x\cap O_d)\cap D=\varnothing$, a contradiction.
Thus $O_d\cap C=\{d\}$.
So each singleton $\{d\}$ is open in $C$, and therefore $D$ is open in $C$.

Since $D$ is open in $C$, we may apply Definition~\ref{def_charming} with $U=D$. It follows that $C\setminus D$ is Lindel\"of $\Sigma$.
Since $D$ is dense in $C$, the closure of the kernel $D$ in $C$ is all of $C$.
Since $C\setminus D$ is closed in $C$, it is in particular an $F_\sigma$ subset of $C$.
Proposition~\ref{prop_boundary_easycriteria}\textup{(ii)}, applied to the charming space $C$ with kernel $D$, then shows that $C$ is a \(D\)-space.
\end{proof}

\begin{corollary}\label{cor_badpoint_inside_countable_cd_closure}
Let $X$ be a charming space with a Lindel\"of $\Sigma$ kernel $Y$, and put $B=\overline{Y}^{\,X}$.
Assume that $x\in B\setminus Y$ and that there exists a nonempty countable set $D\subseteq Y$ such that $D$ is closed discrete in $Y$ and
\[
x\in \overline{D}^{\,X}.
\]
Then the closed hull
\[
C=\overline{D}^{\,X}
\]
is a charming \(D\)-space with countable open dense kernel $D$, the point $x$ belongs to $C\setminus D$, and the boundary part $C\setminus D$ is Lindel\"of $\Sigma$.
In particular, every individual boundary point witnessed by a countable closed discrete subset of $Y$ lies inside a closed \(D\)-subspace with countable open dense kernel.
\end{corollary}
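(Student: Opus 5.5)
The corollary is a direct application of Proposition~\ref{prop_countable_cd_closure_D}, with one extra check: the given point $x$ lies in $C\setminus D$.

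First, since $D\subseteq Y$ is nonempty, countable and closed discrete in $Y$, Proposition~\ref{prop_countable_cd_closure_D} applies verbatim. It shows that $C=\overline{D}^{\,X}$ is a closed charming subspace of $X$ and that $D$ is a countable, open, dense Lindel\"of $\Sigma$ kernel of $C$. The same proposition gives that $C\setminus D$ is Lindel\"of $\Sigma$ and that $C$ is a \(D\)-space. This already covers every assertion of the corollary except the location of $x$.

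Second, I locate $x$. The hypothesis $x\in\overline{D}^{\,X}$ says exactly that $x\in C$. Since $x\in B\setminus Y$, we have $x\notin Y$, and $D\subseteq Y$ then gives $x\notin D$. Hence $x\in C\setminus D$.

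The final sentence of the corollary just restates these facts. A boundary point witnessed by a countable closed discrete subset $D$ of $Y$ is, by definition, a point of $B\setminus Y$ lying in $\overline{D}^{\,X}$. By the two steps above, such a point lies in the closed \(D\)-subspace $C$, which has the countable open dense kernel $D$.

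There is no real obstacle. The only point needing care is that the kernel of $C$ is exactly $D$, and not $C\cap Y$ taken as a larger set; this is supplied by the identity $C\cap Y=D$ established inside the proof of Proposition~\ref{prop_countable_cd_closure_D}. With that identity, the kernel's boundary in $C$ is precisely $C\setminus D$, and this set contains $x$.
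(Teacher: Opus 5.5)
Your proof is correct and matches the paper's. The paper likewise notes that $x\notin D$ because $D\subseteq Y$ and $x\notin Y$, so $x\in C\setminus D$, and takes all remaining assertions directly from Proposition~\ref{prop_countable_cd_closure_D}. Your closing remark about $C\cap Y=D$ is harmless but not needed, since density of $D$ in $C$ already makes $C\setminus D$ the boundary part.
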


\begin{proof}
Because $D\subseteq Y$ and $x\notin Y$, we have $x\notin D$.
So $x\in C\setminus D$.
The remaining assertions follow directly from Proposition~\ref{prop_countable_cd_closure_D}.
\end{proof}

\subsection{Reduction to a closed core}

We now isolate the boundary points at which the local countable-compactness condition from Proposition~\ref{prop_local_countably_compact_boundary} fails.

\begin{definition}\label{def_cc_bad_boundary}
Let $X$ be a charming space with a Lindel\"of $\Sigma$ kernel $Y$, and put $B=\overline{Y}^{\,X}$.
Set
\[
\mathsf{Obs}_{cc}(Y,B)=
\bigl\{x\in B\setminus Y :
\begin{aligned}[t]
&\text{for every open neighborhood } U \text{ of } x \text{ in } X,\\
&U\cap Y \text{ is not countably compact}
\end{aligned}
\bigr\},
\]
and call it the \emph{countably compact obstruction set}. Equivalently, the same set is obtained by using open neighborhoods of $x$ in $B$, because $B$ has the subspace topology inherited from $X$ and $Y\subseteq B$.
\end{definition}

\begin{lemma}\label{lem_cc_bad_boundary_closed}
Let $X$ be a charming space with a Lindel\"of $\Sigma$ kernel $Y$, and put $B=\overline{Y}^{\,X}$.
Then $\mathsf{Obs}_{cc}(Y,B)$ is closed in $B\setminus Y$.
If $X$ is not a \(D\)-space, then $\mathsf{Obs}_{cc}(Y,B)$ is nonempty.
\end{lemma}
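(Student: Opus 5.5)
For closedness I will show that the complement $(B\setminus Y)\setminus\mathsf{Obs}_{cc}(Y,B)$ is open in $B\setminus Y$. Fix a point $x$ in this complement. By Definition~\ref{def_cc_bad_boundary} there is an open neighborhood $U$ of $x$ in $X$ such that $U\cap Y$ is countably compact. I claim that every $z\in U\cap(B\setminus Y)$ also lies outside $\mathsf{Obs}_{cc}(Y,B)$: the same set $U$ is an open neighborhood of $z$ in $X$, and $U\cap Y$ is countably compact. This is immediate from the definition, since the witnessing property depends only on the open set $U$ and not on the point. Hence $U\cap(B\setminus Y)$ is an open subset of $B\setminus Y$ that contains $x$ and misses $\mathsf{Obs}_{cc}(Y,B)$. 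So the complement is open and $\mathsf{Obs}_{cc}(Y,B)$ is closed in $B\setminus Y$.

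For the second assertion I will argue by contraposition, using Proposition~\ref{prop_local_countably_compact_boundary}. Suppose $\mathsf{Obs}_{cc}(Y,B)=\varnothing$. Then every $x\in B\setminus Y$ has an open neighborhood $U_x$ in $X$ with $U_x\cap Y$ countably compact, which is exactly the hypothesis of Proposition~\ref{prop_local_countably_compact_boundary}. That proposition gives that $X$ is a \(D\)-space, which proves the contrapositive.

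There is also a direct route via Proposition~\ref{prop_counterexample_no_local_cc}. If $X$ is not a \(D\)-space, that proposition produces a point $x\in B\setminus Y$ such that $U\cap Y$ fails to be countably compact for every open neighborhood $U$ of $x$ in $X$. Such a point belongs to $\mathsf{Obs}_{cc}(Y,B)$ by definition. I would cite either argument.

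I do not expect a real obstacle here. The only point needing care is to use the same witnessing set $U$ for all nearby boundary points, and then to intersect with $B\setminus Y$ so that openness is taken relative to $B\setminus Y$.
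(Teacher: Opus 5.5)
Your proof is correct and matches the paper's. Closedness uses the same witnessing open set for every point of $U\cap(B\setminus Y)$, and the paper gets nonemptiness through your ``direct route'' via Proposition~\ref{prop_counterexample_no_local_cc}; your contrapositive through Proposition~\ref{prop_local_countably_compact_boundary} is equally valid and slightly more economical, since emptiness of $\mathsf{Obs}_{cc}(Y,B)$ is exactly that proposition's hypothesis.
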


\begin{proof}
Put $G=(B\setminus Y)\setminus \mathsf{Obs}_{cc}(Y,B)$.
Let $x\in G$. By definition there exists an open neighborhood $U_x$ of $x$ in $X$ such that
\[
U_x\cap Y
\]
is countably compact.
For every point $z\in U_x\cap (B\setminus Y)$, the same open set $U_x$ is a neighborhood of $z$ in $X$ and still meets $Y$ in a countably compact set, so $z$ also belongs to $G$. Thus
\[
U_x\cap (B\setminus Y)\subseteq G.
\]
So $G$ is open in the subspace $B\setminus Y$, and $\mathsf{Obs}_{cc}(Y,B)$ is closed in $B\setminus Y$.

If $X$ is not a \(D\)-space, Proposition~\ref{prop_counterexample_no_local_cc} gives a point $x\in B\setminus Y$ such that for every open neighborhood $U$ of $x$ in $X$, the set $U\cap Y$ is not countably compact.
Such a point lies in $\mathsf{Obs}_{cc}(Y,B)$, so the obstruction set is nonempty.
\end{proof}

\begin{corollary}\label{cor_obscc_closed_layer_small}
Let $X$ be a charming space with a Lindel\"of $\Sigma$ kernel $Y$, and put $B=\overline{Y}^{\,X}$.
Assume that there exists a closed subset $S$ of $B$ such that $S\subseteq B\setminus Y$ and
\[
\mathsf{Obs}_{cc}(Y,B)\subseteq S
\quad\text{and}\quad
\covK(S)<\mathfrak d.
\]
Then $X$ is a \(D\)-space.
\end{corollary}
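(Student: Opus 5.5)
This corollary should follow directly from Theorem~\ref{thm_middlelayer_cc_small}, applied with the given set $S$. The plan is to check that its hypotheses hold; the only one needing comment is the local countable compactness condition off $S$.

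First, the shared hypotheses. By assumption, $S$ is closed in $B$, $S\subseteq B\setminus Y$, and $\covK(S)<\mathfrak d$. These are exactly the first three hypotheses of Theorem~\ref{thm_middlelayer_cc_small}.

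Second, the condition off $S$. Fix $x\in(B\setminus Y)\setminus S$. Since $\mathsf{Obs}_{cc}(Y,B)\subseteq S$, the point $x$ is not in $\mathsf{Obs}_{cc}(Y,B)$. We have $x\in B\setminus Y$, so the failure of membership must come from the quantified clause in Definition~\ref{def_cc_bad_boundary}. That clause is negated, so some open neighborhood $U_x$ of $x$ in $X$ has $U_x\cap Y$ countably compact. This is precisely the remaining hypothesis of Theorem~\ref{thm_middlelayer_cc_small}.

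Applying the theorem then shows that $X$ is a \(D\)-space. There is no real obstacle here. The one point to watch is that neighborhoods are taken in $X$, as both the definition and the theorem require, so no translation between neighborhoods in $B$ and in $X$ is needed.
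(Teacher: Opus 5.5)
Your proposal is correct and matches the paper's proof. Points of $(B\setminus Y)\setminus S$ lie outside $\mathsf{Obs}_{cc}(Y,B)$, so by Definition~\ref{def_cc_bad_boundary} each has an open neighborhood in $X$ whose trace on $Y$ is countably compact, and Theorem~\ref{thm_middlelayer_cc_small} then applies directly to the given $S$.
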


\begin{proof}
If $x\in (B\setminus Y)\setminus S$, then $x\notin \mathsf{Obs}_{cc}(Y,B)$.
By Definition~\ref{def_cc_bad_boundary}, there exists an open neighborhood $U_x$ of $x$ in $X$ such that $U_x\cap Y$ is countably compact.
The conclusion follows from Theorem~\ref{thm_middlelayer_cc_small}.
\end{proof}

\begin{lemma}\label{lem_closed_hereditary_D}
Every closed subspace of a \(D\)-space is a \(D\)-space.
\end{lemma}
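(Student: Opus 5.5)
The plan is to argue directly from the definition of a \(D\)-space. Let $X$ be a \(D\)-space and let $F\subseteq X$ be closed. Fix an arbitrary neighborhood assignment $\psi$ on the subspace $F$, so that for each $x\in F$ the set $\psi(x)$ is open in $F$ and contains $x$. The idea is to extend $\psi$ to a neighborhood assignment $\varphi$ on all of $X$, apply the \(D\)-property of $X$, and then discard the points of the resulting closed discrete set that lie outside $F$.

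First I will build $\varphi$. For each $x\in F$, choose an open set $O_x$ in $X$ with $\psi(x)=O_x\cap F$, and put $\varphi(x)=O_x$. For each $x\in X\setminus F$, put $\varphi(x)=X\setminus F$. This set is open because $F$ is closed, and it contains $x$. So $\varphi$ is a neighborhood assignment on $X$. Since $X$ is a \(D\)-space, there is a closed discrete set $D\subseteq X$ with $X=\bigcup_{d\in D}\varphi(d)$.

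Next I will put $D_F=D\cap F$ and check that it works. Let $x\in F$ and pick $d\in D$ with $x\in\varphi(d)$. If $d\notin F$, then $\varphi(d)=X\setminus F$, which does not contain $x$. Hence $d\in D_F$, and $x\in O_d\cap F=\psi(d)$. Thus $F=\bigcup_{d\in D_F}\psi(d)$. The set $D_F$ is a subset of a closed discrete set, so it is closed discrete in $X$, since every subset of a closed discrete set is closed. It is therefore closed discrete in the subspace $F$.

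There is no real obstacle here. The only point needing care is that the points outside $F$ are assigned the neighborhood $X\setminus F$, which is exactly what forces every covering point for $F$ to be chosen from $F$ itself.
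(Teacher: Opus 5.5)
Your proof is correct and follows essentially the same route as the paper: extend the assignment by giving points of $F$ an open set $O_x$ with $O_x\cap F=\psi(x)$ and points outside $F$ the set $X\setminus F$, then keep $D\cap F$. The paper uses $U_x\cup(X\setminus F)$ instead of $O_x$, which makes no difference to the argument. You also spell out why $D\cap F$ is closed discrete in $F$, which the paper leaves implicit.
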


\begin{proof}
Let $F$ be a closed subspace of a \(D\)-space $X$, and let $\varphi$ be a neighborhood assignment on $F$.
For each $x\in F$, choose an open set $U_x\subseteq X$ such that $\varphi(x)=U_x\cap F$.
Define a neighborhood assignment $\psi$ on $X$ by
\[
\psi(x)=
\begin{cases}
U_x\cup(X\setminus F), & x\in F,\\
X\setminus F, & x\in X\setminus F.
\end{cases}
\]
Choose a closed discrete set $D\subseteq X$ such that
\[
X=\bigcup_{d\in D}\psi(d).
\]
No point of $F$ belongs to $\psi(d)$ when $d\in X\setminus F$.
Hence $D\cap F$ is closed discrete in $F$ and
\[
F=\bigcup_{d\in D\cap F}\varphi(d).
\]
\end{proof}

\begin{definition}\label{def_cc_core}
Let $X$ be a charming space with a Lindel\"of $\Sigma$ kernel $Y$, and put $B=\overline{Y}^{\,X}$.
Set
\[
\mathsf{Core}_{cc}(Y,B)=\overline{\mathsf{Obs}_{cc}(Y,B)}^{\,B},
\]
and call it the \emph{closed core}.
\end{definition}

\begin{lemma}\label{lem_cc_core_structure}
Let $X$ be a charming space with a Lindel\"of $\Sigma$ kernel $Y$, and put $B=\overline{Y}^{\,X}$.
Put
\[
H=\mathsf{Core}_{cc}(Y,B)
\quad\text{and}\quad
Y_H=H\cap Y.
\]
Then
\[
H\cap (B\setminus Y)=\mathsf{Obs}_{cc}(Y,B).
\]
Consequently $Y_H$ is closed in $Y$, hence Lindel\"of $\Sigma$.
\end{lemma}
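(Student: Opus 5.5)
The identity $H\cap (B\setminus Y)=\mathsf{Obs}_{cc}(Y,B)$ will be proved by a two-sided inclusion. Since $H$ is the closure of $\mathsf{Obs}_{cc}(Y,B)$ in $B$ and $\mathsf{Obs}_{cc}(Y,B)\subseteq B\setminus Y$, the inclusion $\mathsf{Obs}_{cc}(Y,B)\subseteq H\cap(B\setminus Y)$ is immediate. For the reverse inclusion I will use the subspace closure formula:
\[
H\cap (B\setminus Y)=\overline{\mathsf{Obs}_{cc}(Y,B)}^{\,B}\cap(B\setminus Y)=\overline{\mathsf{Obs}_{cc}(Y,B)}^{\,B\setminus Y}.
\]
By Lemma~\ref{lem_cc_bad_boundary_closed}, $\mathsf{Obs}_{cc}(Y,B)$ is closed in $B\setminus Y$, so the right-hand side equals $\mathsf{Obs}_{cc}(Y,B)$. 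If one prefers a direct argument, let $x\in H\cap(B\setminus Y)$ and suppose $x\notin\mathsf{Obs}_{cc}(Y,B)$. Then there is an open $U_x$ in $X$ with $U_x\cap Y$ countably compact. As in the proof of Lemma~\ref{lem_cc_bad_boundary_closed}, every point of $U_x\cap(B\setminus Y)$ is then outside the obstruction set. Hence $U_x\cap B$ is a $B$-neighborhood of $x$ that misses $\mathsf{Obs}_{cc}(Y,B)$, since that set lies in $B\setminus Y$. This contradicts $x\in H$.

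For the consequence, $H$ is closed in $B$, so $Y_H=H\cap Y$ is closed in the subspace $Y$ of $B$. The class of Lindel\"of $\Sigma$ spaces is closed under closed subspaces \cite[p.~2576, after Proposition~1.2]{KubisOkunevSzeptycki2006}, so $Y_H$ is Lindel\"of $\Sigma$.

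There is no real obstacle here. The only point needing care is to use the subspace closure formula relative to $B\setminus Y$ correctly, and to note that the ``closed in $B\setminus Y$'' statement from Lemma~\ref{lem_cc_bad_boundary_closed} is exactly what this argument requires.
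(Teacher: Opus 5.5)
Your proof is correct and is essentially the paper's argument. The paper uses Lemma~\ref{lem_cc_bad_boundary_closed} to write $\mathsf{Obs}_{cc}(Y,B)=F\cap(B\setminus Y)$ for some closed $F\subseteq B$, notes that $H\subseteq F$, and deduces $Y_H$ closed in $Y$ from $H$ closed in $B$; this is just the closed-set form of your subspace closure formula $\overline{\mathsf{Obs}_{cc}(Y,B)}^{\,B}\cap(B\setminus Y)=\overline{\mathsf{Obs}_{cc}(Y,B)}^{\,B\setminus Y}$, and your handling of the consequence is identical.
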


\begin{proof}
By Lemma~\ref{lem_cc_bad_boundary_closed}, the set $\mathsf{Obs}_{cc}(Y,B)$ is closed in the subspace $B\setminus Y$.
So there exists a closed set $F\subseteq B$ such that
\[
\mathsf{Obs}_{cc}(Y,B)=F\cap(B\setminus Y).
\]
Because $\mathsf{Obs}_{cc}(Y,B)\subseteq F$, we have
\[
H=\overline{\mathsf{Obs}_{cc}(Y,B)}^{\,B}\subseteq F.
\]
Hence
\[
H\cap(B\setminus Y)\subseteq F\cap(B\setminus Y)=\mathsf{Obs}_{cc}(Y,B).
\]
Since $\mathsf{Obs}_{cc}(Y,B)\subseteq H$ by definition, we obtain
\[
H\cap(B\setminus Y)=\mathsf{Obs}_{cc}(Y,B).
\]
Since $H$ is closed in $B$, the set $Y_H=H\cap Y$ is closed in $Y$.
Consequently $Y_H$ is Lindel\"of $\Sigma$.
\end{proof}

\begin{theorem}\label{thm_cc_core_exact_reduction}
Let $X$ be a charming space with a Lindel\"of $\Sigma$ kernel $Y$, and put $B=\overline{Y}^{\,X}$.
Let
\[
H=\mathsf{Core}_{cc}(Y,B).
\]
Then for every open set $U\subseteq B$ with $H\subseteq U$, the closed complement $B\setminus U$ is a \(D\)-space.
Consequently, $B$ is a \(D\)-space if and only if $H$ is a \(D\)-space. The same equivalence holds for $X$ and $H$.
\end{theorem}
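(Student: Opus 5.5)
The plan is to mimic the second half of the proof of Theorem~\ref{thm_middlelayer_cc_small}, with $H$ in the role of $S$. The main difference is that $H$ may meet $Y$; this changes nothing, because the argument only uses points of $F\setminus Y$.

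\emph{Step 1: a closed complement $F=B\setminus U$ is charming.} Fix an open $U\subseteq B$ with $H\subseteq U$, and put $F=B\setminus U$ and $Y_F=Y\cap F$. If $Y_F=\varnothing$, then $Y\subseteq U$, so $F$ is Lindel\"of $\Sigma$ by Lemma~\ref{lem_closure_charming} and Definition~\ref{def_charming}. Hence $F$ is a \(D\)-space by Buzyakova's theorem. Otherwise, $Y_F$ is closed in $Y$ and therefore Lindel\"of $\Sigma$. We check that $Y_F$ is a kernel of $F$ exactly as in the proof of Theorem~\ref{thm_middlelayer_cc_small}. Given an open $W\subseteq B$ with $Y_F\subseteq W$, the set $W\cup U$ is an open neighborhood of $Y$ in $B$. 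Moreover $F\setminus W=B\setminus(U\cup W)$, which is Lindel\"of $\Sigma$ because $B$ is charming with kernel $Y$.

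\emph{Step 2: every point of $F\setminus Y_F$ is locally countably compact in $Y_F$.} Take $x\in F\setminus Y_F$. Then $x\in B\setminus Y$, and $x\notin H$ because $H\subseteq U$. In particular $x\notin\mathsf{Obs}_{cc}(Y,B)$, so some open $U_x\ni x$ in $X$ has $U_x\cap Y$ countably compact. (Lemma~\ref{lem_cc_core_structure} is consistent with this but is not needed here; plain membership in $H$ suffices.) Since $U_x\cap Y_F$ is closed in $U_x\cap Y$, it is countably compact, and $U_x\cap F$ is an open neighborhood of $x$ in $F$. Applying Proposition~\ref{prop_local_countably_compact_boundary} to the charming space $F$ with kernel $Y_F$ then gives that $F$ is a \(D\)-space. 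Only the points of $\overline{Y_F}^{\,F}\setminus Y_F$ are needed for that proposition, and these form a subset of $F\setminus Y_F$.

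\emph{Step 3: the equivalences.} If $H$ is a \(D\)-space, then Lemma~\ref{lem_glue} applied to the closed set $H\subseteq B$, together with Steps 1 and 2, shows that $B$ is a \(D\)-space. Proposition~\ref{prop_reduction_charming} then shows that $X$ is a \(D\)-space. Conversely, suppose $B$ or $X$ is a \(D\)-space. Since $H$ is closed in $B$, and $B$ is closed in $X$, $H$ is a \(D\)-space by Lemma~\ref{lem_closed_hereditary_D}. If $H=\varnothing$, we can apply Lemma~\ref{lem_glue} trivially, or simply take $U=\varnothing$ in Step 1, which gives $F=B$.

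The only delicate point is Step 1: we must confirm that the kernel verification for $F$ uses only $U\supseteq H$ being open. It does not need $U$ to avoid $Y$, so the computation from Theorem~\ref{thm_middlelayer_cc_small} transfers verbatim.
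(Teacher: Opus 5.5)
Your proposal is correct and follows essentially the same route as the paper: the same kernel verification showing that $Y\cap(B\setminus U)$ is a kernel of $B\setminus U$, the same appeal to Proposition~\ref{prop_local_countably_compact_boundary}, and the same use of Lemmas~\ref{lem_glue} and~\ref{lem_closed_hereditary_D} together with Proposition~\ref{prop_reduction_charming} for the equivalences. Your remark that $x\notin\mathsf{Obs}_{cc}(Y,B)$ already follows from $\mathsf{Obs}_{cc}(Y,B)\subseteq H$ is also right; the paper cites Lemma~\ref{lem_cc_core_structure} at this step, but that lemma is not needed there.
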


\begin{proof}
Since $B$ is closed in the Lindel\"of space $X$, $B$ is Lindel\"of.
Let $U\subseteq B$ be open and satisfy $H\subseteq U$.
Put
\[
C=B\setminus U
\quad\text{and}\quad
Y_C=Y\cap C.
\]
We show that $C$ is a \(D\)-space.
By Lemma~\ref{lem_closure_charming}, the space $B$ is charming with dense Lindel\"of $\Sigma$ kernel $Y$.
If $Y\subseteq U$, then $C=B\setminus U$ is Lindel\"of $\Sigma$ by Definition~\ref{def_charming} applied to the charming space $B$. By \cite[p.~494, Theorem]{Buzyakova2002}, $C$ is a \(D\)-space.

Suppose instead that $Y\not\subseteq U$, so that $Y_C\neq\varnothing$.
Because $H\subseteq U$ and $C=B\setminus U$, we have
\[
C\cap H=\varnothing.
\]
Since $C$ is closed in $B$, the set $Y_C=Y\cap C$ is closed in the Lindel\"of $\Sigma$ space $Y$, hence Lindel\"of $\Sigma$.
The space $C$ is closed in the Lindel\"of space $B$, so $C$ is Lindel\"of.
We show that $Y_C$ is a Lindel\"of $\Sigma$ kernel of $C$.
Let $V$ be an open neighborhood of $Y_C$ in $C$, and write $V=W\cap C$ for some open set $W$ in $B$.
Since $Y_C\subseteq W$ and $Y\setminus C\subseteq U$, we have $Y\subseteq W\cup U$.
Because $B$ is charming with kernel $Y$ and $W\cup U$ is an open neighborhood of $Y$ in $B$, the complement $B\setminus(W\cup U)$ is Lindel\"of $\Sigma$.
Now
\[
C\setminus V=(B\setminus U)\setminus(W\cap C)=B\setminus(U\cup W),
\]
so $C\setminus V$ is Lindel\"of $\Sigma$.
Thus $Y_C$ is a Lindel\"of $\Sigma$ kernel of $C$, and Definition~\ref{def_charming} shows that $C$ is charming.

Let $x\in C\setminus Y_C$.
Then $x\in C\subseteq B$, and since $x\notin Y_C=Y\cap C$ while $x\in C$, we have $x\notin Y$, so $x\in C\cap (B\setminus Y)$. Since $C\cap H=\varnothing$, Lemma~\ref{lem_cc_core_structure} gives
\[
x\notin \mathsf{Obs}_{cc}(Y,B).
\]
By Definition~\ref{def_cc_bad_boundary}, there exists an open neighborhood $W_x$ of $x$ in $X$ such that
\[
W_x\cap Y
\]
is countably compact.
Put
\[
V_x=W_x\cap C.
\]
Then $V_x$ is an open neighborhood of $x$ in $C$, and
\[
V_x\cap Y_C=W_x\cap Y_C.
\]
Because $Y_C=Y\cap C$ is closed in $Y$, the subspace
\[
W_x\cap Y_C=(W_x\cap Y)\cap Y_C
\]
is closed in the countably compact space $W_x\cap Y$.
So $V_x\cap Y_C=W_x\cap Y_C$ is countably compact.
Thus, for every $x\in C\setminus Y_C$, there exists an open neighborhood $V_x$ of $x$ in $C$ such that $V_x\cap Y_C$ is countably compact. In particular, this holds for every $x\in\overline{Y_C}^{\,C}\setminus Y_C\subseteq C\setminus Y_C$.
Therefore Proposition~\ref{prop_local_countably_compact_boundary}, applied to the charming space $C$ with Lindel\"of $\Sigma$ kernel $Y_C$ and closure $\overline{Y_C}^{\,C}$, shows that $C$ is a \(D\)-space.

Thus, whenever $U\subseteq B$ is open and $H\subseteq U$, the closed set $B\setminus U$ is a \(D\)-space.
If $H$ is a \(D\)-space, Lemma~\ref{lem_glue}, applied to the closed subspace $H$ of $B$, gives that $B$ is a \(D\)-space, while Lemma~\ref{lem_closed_hereditary_D} gives the converse. Therefore
\[
B\text{ is a \(D\)-space}
\quad\Longleftrightarrow\quad
H\text{ is a \(D\)-space}.
\]
Since $H$ is closed in $X$, $X$ being a \(D\)-space implies that $H$ is a \(D\)-space.
Conversely, if $H$ is a \(D\)-space, then the equivalence above gives that $B$ is a \(D\)-space, and Proposition~\ref{prop_reduction_charming} shows that $X$ is a \(D\)-space.
So the same equivalence holds for $X$ and $H$.
\end{proof}

\begin{proposition}\label{prop_cc_core_small}
Let $X$ be a charming space with a Lindel\"of $\Sigma$ kernel $Y$, and put $B=\overline{Y}^{\,X}$.
If
\[
\covK\bigl(\mathsf{Core}_{cc}(Y,B)\bigr)<\mathfrak d,
\]
then $X$ is a \(D\)-space.
\end{proposition}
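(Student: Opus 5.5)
The plan is to show directly that the closed core $H=\mathsf{Core}_{cc}(Y,B)$ is a \(D\)-space, and then invoke Theorem~\ref{thm_cc_core_exact_reduction}, which says that $X$ is a \(D\)-space if and only if $H$ is. The argument follows the pattern of Theorem~\ref{thm_closure-covK}, with $H$ in place of $B$.

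First, $H$ is closed in $B$, and $B$ is closed in $X$. Hence $H$ is a closed subspace of the Lindel\"of space $X$ and is therefore Lindel\"of. By hypothesis, $\covK(H)<\mathfrak d$, so $H$ is a union of fewer than $\mathfrak d$ compact subspaces. Lemma~\ref{lem_lt_d_compact_Menger} then shows that $H$ is Menger, and Theorem~\ref{thm_MengerD} shows that $H$ is a \(D\)-space. Theorem~\ref{thm_cc_core_exact_reduction} then gives that $B$ is a \(D\)-space and that $X$ is a \(D\)-space.

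If $H=\varnothing$ there is nothing to check: the empty space is trivially a \(D\)-space, and Theorem~\ref{thm_cc_core_exact_reduction} still applies. Alternatively, Proposition~\ref{prop_local_countably_compact_boundary} handles this case directly, since it means that no boundary point is an obstruction.

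I do not expect a real obstacle here. The one point to watch is that we cannot use Corollary~\ref{cor_obscc_closed_layer_small} with $S=H$, because $H$ need not be contained in $B\setminus Y$: it may contain points of $Y$. That is why the argument goes through the exact reduction theorem rather than through Theorem~\ref{thm_middlelayer_cc_small}.
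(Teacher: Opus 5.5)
Your proposal is correct and follows essentially the same route as the paper: $H$ is Lindel\"of as a closed subspace of $X$, it is Menger by Lemma~\ref{lem_lt_d_compact_Menger} and hence a \(D\)-space by Theorem~\ref{thm_MengerD}, and Theorem~\ref{thm_cc_core_exact_reduction} then gives that $X$ is a \(D\)-space. Your side remarks, on the empty core and on why Corollary~\ref{cor_obscc_closed_layer_small} cannot be applied with $S=H$, are also accurate.
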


\begin{proof}
Put
\[
H=\mathsf{Core}_{cc}(Y,B).
\]
The closed subspace $H$ of $X$ is Lindel\"of.
Since $\covK(H)<\mathfrak d$, Lemma~\ref{lem_lt_d_compact_Menger} shows that $H$ is Menger.
By Theorem~\ref{thm_MengerD}, the space $H$ is a \(D\)-space.
Theorem~\ref{thm_cc_core_exact_reduction} then gives that $X$ is a \(D\)-space.
\end{proof}

\begin{proposition}\label{prop_cc_core_Gdelta}
Let $X$ be a charming space with a Lindel\"of $\Sigma$ kernel $Y$, and put $B=\overline{Y}^{\,X}$.
Put
\[
H=\mathsf{Core}_{cc}(Y,B)
\quad\text{and}\quad
Y_H=H\cap Y.
\]
If $Y_H$ is a $G_\delta$ subset of $H$, then $X$ is a \(D\)-space.
\end{proposition}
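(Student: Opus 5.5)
The plan is to show that $H$ itself is Lindel\"of $\Sigma$, so that it is a \(D\)-space by \cite[p.~494, Theorem]{Buzyakova2002}, and then to invoke Theorem~\ref{thm_cc_core_exact_reduction}.

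First, choose open sets $V_n$ in $H$ for $n\in\omega$ with $Y_H=\bigcap_{n\in\omega}V_n$. Since $H$ is closed in $B$, for each $n$ pick an open set $U_n$ in $B$ with $V_n=U_n\cap H$. The sets $U_n$ contain $Y_H$ but need not contain all of $Y$. The natural repair is to set $U_n'=U_n\cup(B\setminus H)$, which is open in $B$ because $H$ is closed in $B$. Every point of $Y\setminus H$ lies in $B\setminus H$, and every point of $Y\cap H=Y_H$ lies in $U_n$, so $Y\subseteq U_n'$.

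By Lemma~\ref{lem_closure_charming}, $B$ is charming with kernel $Y$. Hence $B\setminus U_n'$ is Lindel\"of $\Sigma$. Moreover
\[
B\setminus U_n'=H\setminus U_n=H\setminus V_n .
\]
Therefore each $H\setminus V_n$ is Lindel\"of $\Sigma$.

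We then have the decomposition
\[
H=Y_H\cup\bigcup_{n\in\omega}(H\setminus V_n).
\]
By Lemma~\ref{lem_cc_core_structure}, $Y_H$ is Lindel\"of $\Sigma$. Thus $H$ is a countable union of Lindel\"of $\Sigma$ subspaces of a Hausdorff space. By \cite[p.~2576, after Proposition~1.2]{KubisOkunevSzeptycki2006}, $H$ is Lindel\"of $\Sigma$, and hence a \(D\)-space by Buzyakova's theorem. Theorem~\ref{thm_cc_core_exact_reduction} then shows that $X$ is a \(D\)-space.

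The only delicate step is the enlargement of $U_n$ to an open neighborhood of all of $Y$, since the charming property applies only to neighborhoods of the full kernel. The trick of adjoining $B\setminus H$ is exactly the one used in Proposition~\ref{prop_countable_cd_closure_D}, and the computation $B\setminus U_n'=H\setminus V_n$ is routine. An alternative route is to verify that $H$ is charming with kernel $Y_H$, as in the proof of Theorem~\ref{thm_cc_core_exact_reduction}, and then apply Proposition~\ref{prop_Gdelta_kernel_ambient}; this gives the same conclusion.
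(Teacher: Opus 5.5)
Your proof is correct and is essentially the paper's argument. The paper uses the same enlargement $O=W\cup(B\setminus H)$ to show that $H$ is charming with kernel $Y_H$, applies Proposition~\ref{prop_Gdelta_kernel} inside $H$, and then invokes Theorem~\ref{thm_cc_core_exact_reduction}. By running the $G_\delta$ decomposition directly in $H$, you get the slightly stronger fact that $H$ itself is Lindel\"of $\Sigma$, and you avoid both the separate case $Y_H=\varnothing$ and the detour through $\overline{Y_H}^{\,H}$.
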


\begin{proof}
Since $H$ is closed in the Lindel\"of space $B$, the space $H$ is Lindel\"of.
If $Y_H=\varnothing$, then $B\setminus H$ is an open neighborhood of $Y$ in $B$. Since $Y$ is a kernel of the charming space $B$, Definition~\ref{def_charming} therefore implies that $H$ is Lindel\"of $\Sigma$. By \cite[p.~494, Theorem]{Buzyakova2002}, $H$ is a \(D\)-space, and Theorem~\ref{thm_cc_core_exact_reduction} gives the conclusion.
Assume from now on that $Y_H\neq\varnothing$.
Because $Y_H$ is closed in the Lindel\"of $\Sigma$ space $Y$, the set $Y_H$ is Lindel\"of $\Sigma$.

We show that $Y_H$ is a Lindel\"of $\Sigma$ kernel of $H$.
Let $V$ be an open neighborhood of $Y_H$ in $H$, and write
\[
V=W\cap H
\]
for some open set $W$ in $B$.
Put
\[
O=W\cup (B\setminus H).
\]
Then $O$ is open in $B$, and
\[
Y=(Y\cap H)\cup (Y\setminus H)=Y_H\cup (Y\setminus H)\subseteq O.
\]
Since $B$ is charming with kernel $Y$, the complement
\[
B\setminus O
\]
is Lindel\"of $\Sigma$.
On the other hand,
\[
H\setminus V
=
H\cap (B\setminus W)
=
B\setminus O.
\]
So $H\setminus V$ is Lindel\"of $\Sigma$.
Thus $Y_H$ is a Lindel\"of $\Sigma$ kernel of $H$, and Definition~\ref{def_charming} shows that $H$ is charming.

Put
\[
B_H=\overline{Y_H}^{\,H}.
\]
Since $Y_H$ is a $G_\delta$ subset of $H$ and $B_H\subseteq H$, the set $Y_H$ is also a $G_\delta$ subset of $B_H$.
Proposition~\ref{prop_Gdelta_kernel}, applied to the charming space $H$ with kernel $Y_H$ and closure $B_H$, now shows that $H$ is a \(D\)-space.
Theorem~\ref{thm_cc_core_exact_reduction} then gives the conclusion.
\end{proof}

\begin{proposition}\label{prop_counterexample_cc_core_exact}
Let $X$ be a charming space that is not a \(D\)-space.
Let $Y$ be a Lindel\"of $\Sigma$ kernel of $X$, put $B=\overline{Y}^{\,X}$, and set
\[
H=\mathsf{Core}_{cc}(Y,B).
\]
Then $H$ is not a \(D\)-space,
\[
H\cap (B\setminus Y)=\mathsf{Obs}_{cc}(Y,B),
\]
and
\[
\covK(H)\ge \mathfrak d.
\]
In particular, $H$ is not $\sigma$-compact.
\end{proposition}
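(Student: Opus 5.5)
The statement is a direct combination of earlier results, so the plan is to derive each of the three assertions by contraposition from results already proved.

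\emph{$H$ is not a \(D\)-space.} Theorem~\ref{thm_cc_core_exact_reduction} shows that $X$ is a \(D\)-space if and only if $H$ is. Since $X$ is assumed not to be a \(D\)-space, $H$ cannot be a \(D\)-space.

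\emph{The identity.} The equality $H\cap(B\setminus Y)=\mathsf{Obs}_{cc}(Y,B)$ is exactly Lemma~\ref{lem_cc_core_structure}. That lemma holds for every charming space with kernel $Y$, so it can be quoted verbatim.

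\emph{The bound $\covK(H)\ge\mathfrak d$.} Suppose instead that $\covK(H)<\mathfrak d$. Then Proposition~\ref{prop_cc_core_small} gives that $X$ is a \(D\)-space, contradicting the hypothesis. Alternatively, one can argue directly: $H$ is closed in the Lindel\"of space $X$, hence Lindel\"of. By Lemma~\ref{lem_lt_d_compact_Menger} it would be Menger, and by Theorem~\ref{thm_MengerD} it would then be a \(D\)-space, contradicting the first item.

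\emph{$H$ is not $\sigma$-compact.} If $H$ were $\sigma$-compact, then $\covK(H)\le\omega<\mathfrak d$, because $\mathfrak d>\omega$ as shown in the Preliminaries. This contradicts the bound just established. One small point to check is the degenerate case $H=\varnothing$: then $\covK(H)=0$, which already contradicts the bound. In fact $H\neq\varnothing$ anyway, since Lemma~\ref{lem_cc_bad_boundary_closed} gives $\mathsf{Obs}_{cc}(Y,B)\neq\varnothing$.

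No step is a genuine obstacle. The only care needed is to invoke Theorem~\ref{thm_cc_core_exact_reduction} in its ``$X$ versus $H$'' form rather than the ``$B$ versus $H$'' form.
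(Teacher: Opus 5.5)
Your proposal is correct and follows the paper's proof essentially step for step: Theorem~\ref{thm_cc_core_exact_reduction} gives that $H$ is not a \(D\)-space, Lemma~\ref{lem_cc_core_structure} gives the identity, and Proposition~\ref{prop_cc_core_small} is used by contraposition for $\covK(H)\ge\mathfrak d$, with $\mathfrak d>\omega$ ruling out $\sigma$-compactness. Your alternative Menger argument simply unpacks the proof of Proposition~\ref{prop_cc_core_small}, and your remark on $H=\varnothing$ is harmless but unnecessary.
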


\begin{proof}
By Theorem~\ref{thm_cc_core_exact_reduction}, the space $H$ is not a \(D\)-space.
Lemma~\ref{lem_cc_core_structure} gives
\[
H\cap (B\setminus Y)=\mathsf{Obs}_{cc}(Y,B).
\]
If $\covK(H)<\mathfrak d$, then Proposition~\ref{prop_cc_core_small} would imply that $X$ is a \(D\)-space, contrary to the assumption.
Thus $\covK(H)\ge \mathfrak d$.
If $H$ were $\sigma$-compact, then $\covK(H)\le \omega<\mathfrak d$, a contradiction.
\end{proof}

\begin{proposition}\label{prop_counterexample_core_not_countably_generated}
Let $X$ be a charming space that is not a \(D\)-space.
Let $Y$ be a Lindel\"of $\Sigma$ kernel of $X$, put $B=\overline{Y}^{\,X}$, and set
\[
H=\mathsf{Core}_{cc}(Y,B).
\]
Then no countable closed discrete subset of $H\cap Y$ is dense in $H$.
Equivalently, $H$ is not the closure in $H$ of any countable closed discrete subset of $H\cap Y$.
\end{proposition}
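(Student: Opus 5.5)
We argue by contradiction. Suppose some countable set $D\subseteq H\cap Y$ is closed discrete in $H\cap Y$ and satisfies $\overline{D}^{\,H}=H$. The plan is to show that $H$ is then a \(D\)-space, which contradicts Theorem~\ref{thm_cc_core_exact_reduction}: since $X$ is not a \(D\)-space, $H$ cannot be one either.

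First, if $D=\varnothing$ then $H=\varnothing$, and $H$ is trivially a \(D\)-space. So assume $D\neq\varnothing$. Put $Y_H=H\cap Y$.

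Next we check that $D$ is closed discrete in $Y$. It is closed in $Y_H$, and $Y_H$ is closed in $Y$ by Lemma~\ref{lem_cc_core_structure}, so $D$ is closed in $Y$. Discreteness passes to any superspace containing $D$ with the inherited topology, so $D$ is discrete in $Y$ as well.

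Now apply Proposition~\ref{prop_countable_cd_closure_D} to the charming space $X$, its kernel $Y$, and the set $D$. It tells us that $C=\overline{D}^{\,X}$ is a charming \(D\)-space. Since $H$ is closed in $B$ and $B$ is closed in $X$, the set $H$ is closed in $X$, and so
\[
\overline{D}^{\,X}=\overline{D}^{\,H}=H.
\]
Hence $C=H$, so $H$ is a \(D\)-space. This is the desired contradiction.

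The equivalence of the two formulations in the statement is immediate, since $D$ being dense in $H$ means exactly $\overline{D}^{\,H}=H$.

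The only points needing care are transferring "closed discrete in $H\cap Y$" to "closed discrete in $Y$", which rests on $Y_H$ being closed in $Y$, and identifying the closure in $X$ with the closure in $H$, which rests on $H$ being closed in $X$. I expect no step to be a real obstacle; the weight of the argument is carried by Proposition~\ref{prop_countable_cd_closure_D} and Theorem~\ref{thm_cc_core_exact_reduction}.
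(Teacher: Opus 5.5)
Your proof is correct and follows the paper's argument: you move closed discreteness of $D$ from $H\cap Y$ to $Y$ via Lemma~\ref{lem_cc_core_structure}, apply Proposition~\ref{prop_countable_cd_closure_D}, and contradict Theorem~\ref{thm_cc_core_exact_reduction}. The one small difference is that you use the closedness of $H$ in $X$ to get $\overline{D}^{\,X}=H$ outright, whereas the paper records only $H\subseteq\overline{D}^{\,X}$ and then needs Lemma~\ref{lem_closed_hereditary_D}.
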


\begin{proof}
If some countable closed discrete set $D\subseteq H\cap Y$ were dense in $H$, then
\[
H=\overline{D}^{\,H}=H\cap \overline{D}^{\,X},
\]
by the subspace closure formula. In particular,
\[
H\subseteq \overline{D}^{\,X}.
\]
By Lemma~\ref{lem_cc_core_structure}, the set $H\cap Y$ is closed in $Y$. Since $D$ is closed in the subspace $H\cap Y$, it is closed in $Y$.
For each $d\in D$, because $D$ is discrete in the subspace $H\cap Y$, there exists an open set $U_d$ in $H\cap Y$ such that
\[
d\in U_d
\quad\text{and}\quad
U_d\cap D=\{d\}.
\]
Write
\[
U_d=(H\cap Y)\cap O_d
\]
for some open set $O_d$ in $Y$.
Since $D\subseteq H\cap Y$, we obtain
\[
O_d\cap D=U_d\cap D=\{d\}.
\]
Thus $D$ is discrete in $Y$, and hence closed discrete in $Y$.
Because $H$ is not a \(D\)-space, it is nonempty. Since $D$ is dense in $H$, the set $D$ is nonempty as well.
Proposition~\ref{prop_countable_cd_closure_D}, applied in the original charming space $X$, shows that $\overline{D}^{\,X}$ is a closed charming \(D\)-subspace of $X$.
Since $H$ is closed in $\overline{D}^{\,X}$, Lemma~\ref{lem_closed_hereditary_D} would then force $H$ to be a \(D\)-space.
This contradicts Proposition~\ref{prop_counterexample_cc_core_exact}.
\end{proof}

\begin{corollary}\label{cor_counterexample_obscc_not_small_closed_layer}
Let $X$ be a charming space that is not a \(D\)-space.
Let $Y$ be a Lindel\"of $\Sigma$ kernel of $X$ and put $B=\overline{Y}^{\,X}$.
If $S\subseteq B\setminus Y$ is closed in $B$ and
\[
\mathsf{Obs}_{cc}(Y,B)\subseteq S,
\]
then
\[
\covK(S)\ge \mathfrak d.
\]
In particular, no such closed set $S$ is $\sigma$-compact.
\end{corollary}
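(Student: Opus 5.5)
This is a direct contrapositive of Corollary~\ref{cor_obscc_closed_layer_small}. Suppose, for contradiction, that $S\subseteq B\setminus Y$ is closed in $B$, contains $\mathsf{Obs}_{cc}(Y,B)$, and satisfies $\covK(S)<\mathfrak d$. Then $S$ satisfies every hypothesis of Corollary~\ref{cor_obscc_closed_layer_small}, so that corollary makes $X$ a \(D\)-space. This contradicts the standing assumption that $X$ is not a \(D\)-space, and therefore $\covK(S)\ge\mathfrak d$.

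An alternative route goes through Theorem~\ref{thm_middlelayer_cc_small}. For each $x\in(B\setminus Y)\setminus S$ we have $x\notin\mathsf{Obs}_{cc}(Y,B)$, so Definition~\ref{def_cc_bad_boundary} supplies an open neighborhood $U_x$ in $X$ with $U_x\cap Y$ countably compact, and the theorem applies. Either route works; the first is shorter, and I would use it.

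For the final clause, suppose $S$ were $\sigma$-compact. Then $\covK(S)\le\omega$. But $\mathfrak d>\omega$, as shown in the preliminaries, so $\covK(S)<\mathfrak d$, contradicting the bound just proved.

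There is no real obstacle here. The only points to check are that $S$ matches the hypotheses of Corollary~\ref{cor_obscc_closed_layer_small} exactly (closed in $B$, disjoint from $Y$, containing the obstruction set) and that the conventions agree. When $S=\varnothing$, which can occur only if $\mathsf{Obs}_{cc}(Y,B)=\varnothing$, we have $\covK(S)=0<\mathfrak d$. The argument then still yields the contradiction, which is consistent with Lemma~\ref{lem_cc_bad_boundary_closed}: since $X$ is not a \(D\)-space, the obstruction set is nonempty.
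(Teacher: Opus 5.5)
Your proof is correct and matches the paper's argument: assuming $\covK(S)<\mathfrak d$, Corollary~\ref{cor_obscc_closed_layer_small} would make $X$ a \(D\)-space, which is a contradiction, and the $\sigma$-compact clause follows from $\covK(S)\le\omega<\mathfrak d$. Your remark on the empty case is harmless but unnecessary, since Lemma~\ref{lem_cc_bad_boundary_closed} already makes $\mathsf{Obs}_{cc}(Y,B)$ nonempty when $X$ is not a \(D\)-space.
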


\begin{proof}
If $\covK(S)<\mathfrak d$, then Corollary~\ref{cor_obscc_closed_layer_small} would imply that $X$ is a \(D\)-space, contrary to the assumption.
Thus $\covK(S)\ge \mathfrak d$.
If $S$ were $\sigma$-compact, then $\covK(S)\le \omega<\mathfrak d$, again a contradiction.
\end{proof}

\subsection{The remaining question for the closed core}

The reductions above force the following properties on any counterexample.
If $X$ is charming but not a \(D\)-space, with kernel $Y$, closure $B=\overline{Y}^{\,X}$, and core
\[
H=\mathsf{Core}_{cc}(Y,B),
\]
then one has
\begin{enumerate}[label=\textup{(\roman*)}]
\item $H$ is not a \(D\)-space and $\covK(H)\ge \mathfrak d$.
\item
\[
H\cap (B\setminus Y)=\mathsf{Obs}_{cc}(Y,B).
\]
\item No countable closed discrete subset of $H\cap Y$ is dense in $H$. Equivalently, $H$ is not the closure in $H$ of any countable closed discrete subset of $H\cap Y$.
\item If $S\subseteq B\setminus Y$ is closed in $B$ and
\[
\mathsf{Obs}_{cc}(Y,B)\subseteq S,
\]
then
\[
\covK(S)\ge \mathfrak d.
\]
In particular, no such closed set $S$ is $\sigma$-compact.
\end{enumerate}
After these reductions, the problem is concentrated on the closed subspace $H$. Theorem~\ref{thm_equiv_dense_extension} reduces the general question to the dense-kernel case. Theorem~\ref{thm_middlelayer_cc_small} handles a closed exceptional layer of compact covering number below $\mathfrak d$, provided that local countable compactness along $Y$ holds at every remaining boundary point. By Definition~\ref{def_cc_core} and Lemma~\ref{lem_cc_core_structure},
\[
H=\mathsf{Core}_{cc}(Y,B)=\overline{\mathsf{Obs}_{cc}(Y,B)}^{\,B}
\quad\text{and}\quad
H\cap (B\setminus Y)=\mathsf{Obs}_{cc}(Y,B).
\]
Theorem~\ref{thm_cc_core_exact_reduction} shows that $X$ is a \(D\)-space if and only if $H$ is. On the other hand, Proposition~\ref{prop_countable_cd_closure_D} shows that the closure in $X$ of every nonempty countable closed discrete subset of $Y$ is already a charming \(D\)-space. Together, Propositions~\ref{prop_counterexample_cc_core_exact} and~\ref{prop_counterexample_core_not_countably_generated} show that every counterexample core satisfies $\covK(H)\ge\mathfrak d$ and cannot be the closure in $H$ of a countable closed discrete subset of $H\cap Y$. These facts suggest the following conjecture.

\begin{conjecture}\label{conj_select_good_core}
For every charming space $X$, there exists a Lindel\"of $\Sigma$ kernel $Y$ of $X$ such that, for $B=\overline{Y}^{\,X}$ and $H=\mathsf{Core}_{cc}(Y,B)$, the set $H\cap Y$ is dense in $H$ and $G_\delta$ in $H$.
\end{conjecture}

Under this conjecture, Proposition~\ref{prop_cc_core_Gdelta} implies that every charming space is a \(D\)-space. For this deduction, only the $G_\delta$ condition is used. The density condition is an additional structural requirement, included to ensure that the surviving kernel trace $H\cap Y$ remains dense in the closed core.

\subsection{Applications to remainders}

For a Hausdorff compactification $bX$ of a space $X$, we call $bX\setminus X$ the remainder. Results on the structure of remainders go back to Henriksen and Isbell \cite{HenriksenIsbell1958}. The next two lemmas give elementary criteria for remainders. The first is another application of the $G_\delta$-kernel argument. We then apply known remainder theorems to topological groups and compactly-fibered coset spaces. None of these results is used in the preceding reductions.

\begin{lemma}\label{lem_remainder_trace_countchar}
Let $X$ be a Tychonoff space and let $bX$ be a Hausdorff compactification.
Put $R=bX\setminus X$ and assume that $R$ is charming.
Let $Y$ be a Lindel\"of $\Sigma$ kernel of $R$.
If there exists a compact set $K\subseteq bX$ of countable character in $bX$ such that $Y=K\cap R$, then $R$ is a \(D\)-space.
\end{lemma}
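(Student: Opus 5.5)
The plan is to show that $Y$ is a $G_\delta$ subset of $R$ and then apply Proposition~\ref{prop_Gdelta_kernel_ambient} to the charming space $R$ with kernel $Y$. That proposition states that a charming space whose Lindel\"of $\Sigma$ kernel is $G_\delta$ in the ambient space is Lindel\"of $\Sigma$, hence a \(D\)-space.

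First, countable character of $K$ in $bX$ gives open sets $W_n\subseteq bX$, $n\in\omega$, forming a base at $K$. That is, $K\subseteq W_n$ for every $n$, and every open set containing $K$ contains some $W_n$. We claim that $K=\bigcap_{n\in\omega}W_n$. Indeed, if $p\in bX\setminus K$, then $bX\setminus\{p\}$ is open in $bX$, because $bX$ is Hausdorff and hence $T_1$, and it contains $K$. So some $W_n\subseteq bX\setminus\{p\}$, and therefore $p\notin\bigcap_n W_n$. Thus $K$ is a $G_\delta$ subset of $bX$.

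Next, put $U_n=W_n\cap R$, which is open in $R$. Then
\[
\bigcap_{n\in\omega}U_n=K\cap R=Y,
\]
so $Y$ is a $G_\delta$ subset of $R$. Proposition~\ref{prop_Gdelta_kernel_ambient} now shows that $R$ is Lindel\"of $\Sigma$ and hence a \(D\)-space. The same conclusion can also be reached through Proposition~\ref{prop_Gdelta_kernel}: intersecting with $B=\overline{Y}^{\,R}$ shows that $Y$ is $G_\delta$ in $B$.

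The only point needing care is the step from countable character to $G_\delta$, since that is exactly where the Hausdorff, in fact $T_1$, property of $bX$ is used; the rest is bookkeeping with subspace traces. The compactness of $K$ is not really needed beyond making "character of $K$" meaningful. Tychonoffness of $R$ is automatic, since $R$ is a subspace of the compact Hausdorff space $bX$, so the standing hypotheses of the earlier results apply.
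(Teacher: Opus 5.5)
Your proof is correct and follows the paper's route: countable character plus the $T_1$ property of $bX$ gives $K=\bigcap_{n}W_n$, and intersecting with $R$ shows that $Y$ is $G_\delta$ in $R$. The only difference is the last step: the paper intersects further with $\overline{Y}^{\,R}$ and invokes Proposition~\ref{prop_Gdelta_kernel}, whereas you apply Proposition~\ref{prop_Gdelta_kernel_ambient} directly, which skips that step and also shows that $R$ is Lindel\"of $\Sigma$.
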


\begin{proof}
Let $\{W_n:n\in\omega\}$ be a countable open neighborhood base of $K$ in $bX$. Since $K\subseteq W_n$ for every $n\in\omega$, we have
\[
K\subseteq\bigcap_{n\in\omega} W_n.
\]
Conversely, let $z\in bX\setminus K$. Since $bX$ is Hausdorff, the set $bX\setminus\{z\}$ is an open neighborhood of $K$. Hence $W_n\subseteq bX\setminus\{z\}$ for some $n\in\omega$, and therefore $z\notin W_n$. It follows that
\[
K=\bigcap_{n\in\omega} W_n,
\]
so $K$ is a $G_\delta$ subset of $bX$.
Since $Y=K\cap R$, we obtain
\[
Y=R\cap K=\bigcap_{n\in\omega}(R\cap W_n).
\]
Each set $R\cap W_n$ is open in $R$, so $Y$ is a $G_\delta$ subset of $R$.
Because $\overline{Y}^{\,R}$ is a closed subspace of $R$ and each set $R\cap W_n$ is open in $R$, each set
\[
\overline{Y}^{\,R}\cap (R\cap W_n)
\]
is open in $\overline{Y}^{\,R}$. Moreover,
\[
Y=\overline{Y}^{\,R}\cap Y
=\overline{Y}^{\,R}\cap \bigcap_{n\in\omega}(R\cap W_n)
=\bigcap_{n\in\omega}\bigl(\overline{Y}^{\,R}\cap (R\cap W_n)\bigr).
\]
Thus $Y$ is a $G_\delta$ subset of $\overline{Y}^{\,R}$.
Now Proposition~\ref{prop_Gdelta_kernel}, applied to the charming space $R$ with kernel $Y$, gives the conclusion.
\end{proof}

\begin{lemma}\label{lem_remainder_sigmacompact_test}
Let $X$ be a Tychonoff space and let $bX$ be a Hausdorff compactification.
Put $R=bX\setminus X$.
Assume that $K\subseteq R$ is compact and has a countable neighborhood base in $bX$. Suppose that $(U_n)_{n\in\omega}$ is a decreasing neighborhood base of $K$ in $bX$ with $K=\bigcap_{n\in\omega}U_n$. Put $V_n=U_n\cap R$, and assume that $R\setminus V_n$ is $\sigma$-compact for every $n\in\omega$.
Then $R$ is $\sigma$-compact.
In particular, $R$ is a \(D\)-space.
\end{lemma}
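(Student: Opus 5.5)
The plan is to write $R$ as $K$ together with countably many $\sigma$-compact pieces. Since $(U_n)_{n\in\omega}$ is decreasing with $\bigcap_{n\in\omega}U_n=K$ and $K\subseteq R$, every point $z\in R\setminus K$ misses some $U_n$. Because $z\in R$, that means $z\in R\setminus V_n$ for this $n$. Hence
\[
R=K\cup\bigcup_{n\in\omega}(R\setminus V_n).
\]
Here $K$ is compact and each $R\setminus V_n$ is $\sigma$-compact by hypothesis. A countable union of $\sigma$-compact sets together with one compact set is $\sigma$-compact, so $R$ is $\sigma$-compact. The neighborhood-base property of $(U_n)$ is not even needed beyond the identity $K=\bigcap_{n}U_n$, which is assumed outright. (If only the base property were given, it would yield this identity exactly as in the proof of Lemma~\ref{lem_remainder_trace_countchar}, using that $bX$ is Hausdorff.)

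For the final clause, I would use that a $\sigma$-compact space is Lindel\"of, and that $R$ is Hausdorff as a subspace of $bX$. Then $R$ is a countable union of compact subspaces, so $\covK(R)\le\omega<\mathfrak d$, the strict inequality holding because $\mathfrak d>\omega$ as shown in the preliminaries. Lemma~\ref{lem_lt_d_compact_Menger} then makes $R$ Menger, and Theorem~\ref{thm_MengerD} makes it a \(D\)-space. Alternatively, compact sets are Lindel\"of $\Sigma$, so by \cite[p.~2576, after Proposition~1.2]{KubisOkunevSzeptycki2006} the $\sigma$-compact space $R$ is Lindel\"of $\Sigma$, and \cite[p.~494, Theorem]{Buzyakova2002} applies.

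There is no real obstacle. The only point needing care is verifying the set identity, namely that points of $R\setminus K$ are caught by some $R\setminus V_n$; this rests on $K=\bigcap_n U_n$ and on $z\in R$.
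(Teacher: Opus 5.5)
Your proposal is correct and is essentially the paper's proof. The paper uses the same decomposition $R=K\cup\bigcup_{n\in\omega}(R\setminus V_n)$ to get $\sigma$-compactness, and then obtains the $D$-space conclusion through Lindel\"of $\Sigma$ and Buzyakova's theorem, exactly as in your second route; your Menger-based alternative via Lemma~\ref{lem_lt_d_compact_Menger} and Theorem~\ref{thm_MengerD} is also valid.
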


\begin{proof}
Since $K=\bigcap_{n\in\omega}U_n$ and $K\subseteq R$, we have
\[
K=\bigcap_{n\in\omega}V_n.
\]
Moreover,
\[
R\setminus V_n=R\setminus (U_n\cap R)=R\cap (bX\setminus U_n).
\]
By hypothesis, $R\setminus V_n$ is $\sigma$-compact for every $n\in\omega$.
If $x\in R\setminus K$, then $x\notin V_n$ for some $n\in\omega$, because $K=\bigcap_{n\in\omega}V_n$.
Therefore
\[
R=K\cup\bigcup_{n\in\omega}(R\setminus V_n).
\]
For each $n\in\omega$, write
\[
R\setminus V_n=\bigcup_{m\in\omega} C_{n,m}
\]
with each $C_{n,m}$ compact.
Then
\[
R=K\cup\bigcup_{n,m\in\omega} C_{n,m},
\]
so $R$ is $\sigma$-compact. By \cite[p.~2576, after Proposition~1.2]{KubisOkunevSzeptycki2006}, $R$ is Lindel\"of $\Sigma$. By \cite[p.~494, Theorem]{Buzyakova2002}, $R$ is a \(D\)-space.
\end{proof}

\begin{corollary}\label{cor_remainder_compact_core_easy}
Let $X$ be a Tychonoff space and let $bX$ be a Hausdorff compactification.
Put $R=bX\setminus X$.
Assume that there exists a compact set $K\subseteq R$ that admits a countable neighborhood base in $bX$ such that $R\setminus K$ is $\sigma$-compact.
Then $R$ is $\sigma$-compact.
In particular, $R$ is a \(D\)-space.
\end{corollary}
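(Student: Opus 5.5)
We would reduce the statement to Lemma~\ref{lem_remainder_sigmacompact_test}. It requires a decreasing neighborhood base $(U_n)_{n\in\omega}$ of $K$ in $bX$ with $K=\bigcap_{n\in\omega}U_n$, such that each $R\setminus V_n$ is $\sigma$-compact, where $V_n=U_n\cap R$. Once these hypotheses are checked, that lemma gives directly that $R$ is $\sigma$-compact and a \(D\)-space.

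\emph{Constructing the base.} Start from a countable neighborhood base $\{W_n:n\in\omega\}$ of $K$ in $bX$. Put $U_n=\bigcap_{k\le n}W_k$. This is open, contains $K$, is decreasing, and is still a neighborhood base, since $U_n\subseteq W_n$. The equality $K=\bigcap_{n\in\omega}U_n$ follows by the argument in the proof of Lemma~\ref{lem_remainder_trace_countchar}. If $z\notin K$, then $bX\setminus\{z\}$ is an open neighborhood of $K$, because singletons are closed in the Hausdorff space $bX$. It therefore contains some $W_n$, and hence $z\notin U_n$.

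\emph{$\sigma$-compactness of $R\setminus V_n$.} We have
\[
R\setminus V_n=R\cap(bX\setminus U_n),
\]
and this set is contained in $R\setminus K$ because $K\subseteq U_n$. It is also closed in $R$. Write $R\setminus K=\bigcup_{m\in\omega}C_m$ with each $C_m$ compact. Then
\[
R\setminus V_n=\bigcup_{m\in\omega}\bigl(C_m\cap(bX\setminus U_n)\bigr).
\]
Each piece $C_m\cap(bX\setminus U_n)$ is a closed subset of the compact set $C_m$, since $bX\setminus U_n$ is closed in $bX$. Each piece is therefore compact, and so $R\setminus V_n$ is $\sigma$-compact.

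\emph{Main obstacle.} The only delicate point is making sure the base is decreasing and intersects exactly to $K$. The intersection step uses Hausdorffness of $bX$. Once this is settled, Lemma~\ref{lem_remainder_sigmacompact_test} applies and completes the proof. Alternatively, one could observe directly that $R=K\cup(R\setminus K)$ is a countable union of compact sets, since $K$ is compact. Either route gives that $R$ is $\sigma$-compact, hence Lindel\"of $\Sigma$ by \cite[p.~2576, after Proposition~1.2]{KubisOkunevSzeptycki2006}, and hence a \(D\)-space by \cite[p.~494, Theorem]{Buzyakova2002}.
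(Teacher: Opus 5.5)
Your proposal is correct and follows the paper's proof essentially step for step. You reduce to Lemma~\ref{lem_remainder_sigmacompact_test} and get $\sigma$-compactness of each $R\setminus V_n$ by intersecting the compact pieces $C_m$ of $R\setminus K$ with the closed set $bX\setminus U_n$. You also build the decreasing base with $\bigcap_{n}U_n=K$ explicitly, which the paper takes for granted. Your closing remark that $R=K\cup(R\setminus K)$ is already a countable union of compact sets is valid too, and it shows the countable-base hypothesis is not actually needed.
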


\begin{proof}
Fix a decreasing neighborhood base $(U_n)_{n\in\omega}$ of $K$ in $bX$ such that $K=\bigcap_{n\in\omega}U_n$, and put
\[
V_n=U_n\cap R
\]
for each $n\in\omega$. Then
\[
R\setminus V_n=R\setminus (U_n\cap R)=R\cap (bX\setminus U_n),
\]
so $R\setminus V_n$ is closed in $R$.
Since $K\subseteq U_n$, we have $R\setminus V_n\subseteq R\setminus K$, and therefore
\[
R\setminus V_n=(R\setminus V_n)\cap (R\setminus K).
\]
Hence $R\setminus V_n$ is closed in the subspace $R\setminus K$.
Write
\[
R\setminus K=\bigcup_{m\in\omega} C_m
\]
with each $C_m$ compact.
Then
\[
R\setminus V_n=(R\setminus V_n)\cap (R\setminus K)=\bigcup_{m\in\omega}\bigl(C_m\cap (R\setminus V_n)\bigr),
\]
and each $C_m\cap (R\setminus V_n)$ is compact, because $C_m\subseteq R\setminus K$ and $R\setminus V_n$ is closed in the subspace $R\setminus K$, so $C_m\cap (R\setminus V_n)$ is closed in $C_m$.
Hence $R\setminus V_n$ is $\sigma$-compact for every $n\in\omega$.
Lemma~\ref{lem_remainder_sigmacompact_test}, applied to the compactification $bX$ and the remainder $R=bX\setminus X$, now gives the conclusion.
\end{proof}

\begin{theorem}\label{thm_remD_PengHu}
\cite[Theorem~2.29]{PengHu2024} If $X$ is a paracompact $p$-space, then for every Hausdorff compactification $bX$ of $X$, the remainder $bX\setminus X$ is a \(D\)-space.
\end{theorem}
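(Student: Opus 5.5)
The plan is to reduce the statement to Buzyakova's theorem \cite[p.~494, Theorem]{Buzyakova2002}, which says that Lindel\"of $\Sigma$ spaces are \(D\)-spaces. To do this I would show that the remainder $R=bX\setminus X$ of a paracompact $p$-space is a Lindel\"of $p$-space. Every Lindel\"of $p$-space is Lindel\"of $\Sigma$: it admits a perfect map onto a separable metrizable space, so it is covered by a USCO image of a second countable space, and Lemma~\ref{lem_Lsig_USCO} applies. Buzyakova's theorem then finishes the proof. Nothing from the charming-space machinery is needed, and the choice of compactification $bX$ plays no role beyond the Hausdorff property.

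\textbf{Lindel\"ofness.} First I would check that $R$ is Lindel\"of, via the Henriksen--Isbell criterion \cite{HenriksenIsbell1958}: $X$ is of countable type if and only if some (equivalently, every) remainder of $X$ is Lindel\"of. A paracompact $p$-space admits a perfect map $f\colon X\to M$ onto a metrizable space. Given a compact $K\subseteq X$, the set $f^{-1}(f(K))$ is compact, and it has countable character in $X$ because $f(K)$ has countable character in $M$ and $f$ is perfect. Hence $X$ is of countable type, and $R$ is Lindel\"of.

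\textbf{The $p$-structure on $R$.} Second, I would show that $R$ is a $p$-space, following Arhangel'ski\u{\i}'s duality theorem: $X$ is a paracompact $p$-space if and only if $bX\setminus X$ is a Lindel\"of $p$-space. Concretely, I would take a development-like sequence $(\gamma_n)$ for $X$. Using the metric on $M$, let $\gamma_n$ consist of open sets of $bX$ whose traces on $X$ are the $f$-preimages of a locally finite open cover of mesh $<1/n$. Then, for $x\in X$, the intersection $\bigcap_n \overline{\mathrm{St}(x,\gamma_n)}^{\,bX}$ is the compact fibre $f^{-1}(f(x))$, which lies inside $X$. By the Lindel\"of property of $R$ established above, the dual families $\{bX\setminus \overline{G}:G\in\gamma_n\}$ can be thinned to countable ones. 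These countable families give a countable network modulo a compact cover of $R$, which is exactly the Lindel\"of $\Sigma$ (indeed $p$) structure needed.

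\textbf{Main obstacle.} I expect the second step to be the hard part: transferring the feathering of $X$ in $bX$ into a countable network for $R$ modulo compact sets. In practice I would first try to cite Arhangel'ski\u{\i}'s theorem directly. Only if a self-contained proof is required would I carry out the dual-family argument, and there the delicate points are verifying the compactness of the sets $\bigcap_n(\cdots)\cap R$ and the network property.
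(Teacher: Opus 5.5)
The paper gives no proof of this statement. It quotes it from \cite[Theorem~2.29]{PengHu2024} and uses it as a black box, so there is no internal argument to compare yours against.

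Your route is a correct derivation from classical results, provided you take your own advice and cite Arhangel'ski\u{\i}'s duality theorem: a Tychonoff space is a paracompact $p$-space if and only if some (equivalently, every) remainder is a Lindel\"of $p$-space. From there your chain is sound. A Lindel\"of $p$-space is paracompact, hence a perfect preimage of a metrizable space. That space is Lindel\"of and therefore second countable. The fibre map of a perfect surjection is a nonempty-valued USCO, so Lemma~\ref{lem_Lsig_USCO} and Buzyakova's theorem finish the proof. Compared with the bare citation, your argument shows that nothing beyond duality and Buzyakova is needed. It also gives the stronger conclusion that $bX\setminus X$ is Lindel\"of $\Sigma$. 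Such a remainder is then automatically CL charming, with itself as kernel. So the paper's remark that the theorem ``applies in particular when the remainder is charming'' is already covered by Corollary~\ref{cor_closed_kernel_D}.

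There are two caveats. First, once Arhangel'ski\u{\i}'s theorem is cited, your Henriksen--Isbell step is redundant, because Lindel\"ofness is part of its conclusion. The step itself is correct: $f^{-1}(f(K))$ is a compact set of countable character containing $K$.

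Second, the self-contained fallback for the $p$-structure is not a proof as written. The identity $\bigcap_n\overline{\mathrm{St}(x,\gamma_n)}^{\,bX}=f^{-1}(f(x))$ is fine. But thinning $\{bX\setminus\overline{G}:G\in\gamma_n\}$ to countable subfamilies does not by itself produce either the compact cover or the network property. You would still need a countable family $\mathcal F$ of closed subsets of $bX$ with $\bigcap\{F\in\mathcal F:y\in F\}\subseteq R$ for every $y\in R$. Constructing that family is the nontrivial part of Arhangel'ski\u{\i}'s theorem. So your proof rests on that citation, which is no weaker a footing than the paper's own.
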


Since metrizable spaces are paracompact $p$-spaces, Theorem~\ref{thm_remD_PengHu} implies that every remainder of a metrizable space in a Hausdorff compactification is a \(D\)-space, and this applies in particular when the remainder is charming.

Let $G$ be a topological group, let $bG$ be a Hausdorff compactification of $G$, and put $R=bG\setminus G$. If $G$ is locally compact, then $G$ is open in every Hausdorff compactification $bG$, so $R=bG\setminus G$ is compact, and hence trivially a \(D\)-space. We may therefore assume $G$ is not locally compact.
Suppose further that $R$ is charming, and hence Lindel\"of. By \cite[Theorem~1.7]{Arhangelskii2009Study}, the group $G$ is a paracompact $p$-space. Theorem~\ref{thm_remD_PengHu} therefore shows that $R$ is a \(D\)-space.

Let $X$ be a compactly-fibered coset space, let $bX$ be a Hausdorff compactification of $X$, and put $R=bX\setminus X$. If $R$ is charming, then $R$ is Lindel\"of. By \cite[Corollary~2.30]{PengHu2024}, $R$ is a \(D\)-space.

\end{document}